\numberwithin{equation}{section}
\declaretheorem{theorem}
\numberwithin{theorem}{section}
\newtheorem{lemma}[theorem]{Lemma}
\newtheorem{corol}[theorem]{Corollary}
\newtheorem{prop}[theorem]{Proposition}
\theoremstyle{definition}
\newtheorem{definition}[theorem]{Definition}
\newtheorem{remark}[theorem]{Remark}
\newtheorem{example}[theorem]{Example}
\newcommand{\R}{\mathbb{R}}
\newcommand{\N}{\mathbb{N}}
\newcommand\D[1]{\mathrm{d}#1\,}
\renewcommand{\phi}{\varphi}
\DeclareMathOperator{\gp}{gp}
\DeclareMathOperator{\supp}{supp}
\DeclareMathOperator{\gen}{gen}
\DeclareMathOperator{\blockdiag}{blockdiag}
\DeclareMathOperator{\spann}{span}
\DeclareMathOperator{\dist}{dist}
\renewcommand{\vec}{\mathbf}
\DeclareMathOperator*{\argmin}{arg\,min}
\DeclareMathOperator*{\Id}{Id}
\definecolor{airforceblue}{rgb}{0.36, 0.54, 0.66}
\definecolor{ao(english)}{rgb}{0.0, 0.5, 0.0}
\newcommand{\bbT}{\mathbb{T}}
\newcommand{\cL}{\mathcal L}
\newcommand{\Lis}{\cL\mathrm{is}}
\newcommand{\udelta}{{\underline{\delta}}}
\DeclareFontFamily{U}{matha}{\hyphenchar\font45}
\DeclareFontShape{U}{matha}{m}{n}{
  <5> <6> <7> <8> <9> <10> gen * matha
  <10.95> matha10 <12> <14.4> <17.28> <20.74> <24.88> matha12
  }{}
\DeclareSymbolFont{matha}{U}{matha}{m}{n}
\DeclareFontFamily{U}{mathx}{\hyphenchar\font45}
\DeclareFontShape{U}{mathx}{m}{n}{
  <5> <6> <7> <8> <9> <10>
  <10.95> <12> <14.4> <17.28> <20.74> <24.88>
  mathx10
  }{}
\DeclareSymbolFont{mathx}{U}{mathx}{m}{n}
\DeclareMathDelimiter{\vvvert}{0}{matha}{"7E}{mathx}{"17}
\DeclareMathOperator{\diam}{diam}
\newcommand{\bbone}{\mathbbm{1}}
\newcommand{\grad}{\nabla}
\renewcommand{\emptyset}{{\varnothing}}
\DeclareFontFamily{U}{mathx}{\hyphenchar\font45}
\DeclareFontShape{U}{mathx}{m}{n}{
  <5> <6> <7> <8> <9> <10>
  <10.95> <12> <14.4> <17.28> <20.74> <24.88>
  mathx10
  }{}
\DeclareSymbolFont{mathx}{U}{mathx}{m}{n}
\DeclareMathAccent{\widecheck}{0}{mathx}{"71}
\DeclareMathAccent{\wideparen}{0}{mathx}{"75}
\DeclareMathOperator{\Span}{span}
\newcommand{\newer}[1]{#1}
\newcommand{\be}{\begin{equation}}
\newcommand{\ee}{\end{equation}}
\newcommand{\parent}{{\tt parent}}
\newcommand{\tria}{{\mathcal T}}
\protected\def\tikz@nonactivecolon{\ifmmode\mathrel{\mathop\ordinarycolon}\else:\fi}
\title[Efficient space-time adaptivity for parabolic equations]{Efficient space-time adaptivity for parabolic evolution equations using wavelets in time and finite elements in space}
\author{Raymond van Veneti\"e \and Jan Westerdiep}
\address{Korteweg--de Vries Institute for Mathematics, University of Amsterdam, \\
P.O. Box 94248, 1090 GE Amsterdam, The Netherlands}
\date{\today}
\subjclass[2010]{}
\thanks{\emph{Funding:} Both authors are supported by the Netherlands Organisation for Scientific Research (NWO) under contract.~no.~613.001.652}
\subjclass[2010]{35K20, 
65M04, 
65M50, 
65T60, 
65Y20. 
}
\keywords{Space-time variational formulations of parabolic PDEs, adaptive approximation, tensor-product approximation, sparse grids, optimal computational complexity}
\begin{document}
\begin{abstract}
Considering the space-time adaptive method for parabolic evolution equations introduced in [arXiv:2101.03956 [math.NA]], this work discusses an implementation of the method in which every step is of linear complexity.
Exploiting the product structure of the space-time cylinder, the method allows for a family of trial spaces
given as the spans of wavelets-in-time tensorized with (locally refined) finite element spaces-in-space.
On spaces whose bases are indexed by \emph{double-trees}, we derive an algorithm that applies the resulting bilinear forms in linear complexity. We provide extensive numerical experiments to demonstrate the linear runtime of the resulting adaptive loop.
~\\
~\\
\newer{{\noindent}\textsc{Supplementary material.} Source code is available at~\cite{VanVenetie2021}.}
\end{abstract}

\maketitle

\section{Introduction}
{\noindent}This paper deals with the adaptive numerical solution of parabolic evolution equations using a simultaneous space-time variational formulation. Compared to the more classical time-stepping schemes, these space-time methods are very flexible. Among other things, they are especially well-suited for massively parallel computation (\cite{Neumuller2019,VanVenetie2020a}), and some can guarantee quasi-best approximations from the trial space (\cite{Andreev2013,Fuhrer2019,Steinbach2020a}).

We are interested in those space-time methods that permit adaptive refinement locally in space \emph{and} time.
Within this class, wavelet-based methods (see~\cite{Schwab2009,Gunzburger2011,Kestler2015}) are attractive, as they can be shown to be \emph{quasi-optimal}:
they produce a sequence of solutions that converges at the best possible rate, at optimal linear computational cost. Moreover, they can overcome the \emph{curse of dimensionality} using a form of \emph{sparse tensor-product approximation}, solving the whole time evolution at a runtime proportional to that of solving the corresponding \emph{stationary} problem.

In~\cite{Stevenson2020b}, we constructed an $r$-linearly converging space-time adaptive solver for
parabolic evolution equations that exploits the
product structure of the space-time cylinder to construct a family of trial spaces given as
the spans of wavelets-in-time
tensorized with (locally refined) finite element spaces-in-space.

The principal difference between this and other wavelet-based methods  is that we use wavelets in time \emph{only}, and standard finite elements in space. This eases implementation, and alleviates the need for a suitable spatial wavelet basis, which is generally difficult for general domains (\cite{Rekatsinas2018a}).
Unfortunately, there is no free lunch: a proof of optimal convergence is, for our method, not yet available.

In this work we discuss an implementation of~\cite{Stevenson2020b} in which the different steps
(each iteration of the linear algebraic solver, the error estimation,
D\"orfler marking, and refinement of trial- and test spaces)
of the adaptive algorithm are of linear complexity.

Special care has to be taken for matrix-vector products.
For a bilinear form that is `local' and equals
(a sum of) tensor-product(s) of bilinear forms in time and space, and
`trial' and `test' spaces spanned by tensor-product multi-level bases with \emph{double-tree} index sets, the resulting
system matrix w.r.t.~both bases can be applied in linear complexity, even though this matrix is not sparse. The algorithm that realizes this complexity makes a clever use of multi- to single-scale transformations alternately in time and space.
This \emph{unidirectional principle} was introduced in~\cite{Balder1996} for `uniform' sparse grids, so without `local refinements',
and it was later extended to general \emph{downward closed} or \emph{lower sets}, also called \emph{adaptive sparse grids},~in \cite{Kestler2014a}. The definition of a lower set in~\cite{Kestler2014a}, there called multi-tree, is more restrictive than our current definition that allows more localized refinements.

To the best of our knowledge, other implementations for the efficient evaluation of tensor-product bilinear forms
(see~\cite{Pfluger2010,Kestler2014a,Pabel,Rekatsinas2018})
are based on the concept of hash maps. There, a hash function is used
to map basis functions to array indices.
In an adaptive loop, the final set of basis functions is unknown in advance so it is impossible to construct a hash function that guarantees an upper bound on the number of hash collisions.
Aiming at true linear complexity, we
implement these operations by traversing \emph{trees} and \emph{double-trees}, so without the use of hash maps.

\subsection*{Organization}
In \S\ref{sec:problem}, we look at the abstract parabolic problem, its stable discretization,
and the adaptive routine.
In~\S\ref{sec:lincomp},
we provide an abstract algorithm for the efficient evaluation of tensor-product bilinear forms w.r.t.~ multilevel bases indexed on \emph{double-trees}.
In \S\ref{sec:heat}, we take the \emph{heat equation} as a model problem, and provide a concrete family of trial- and test
spaces with bases indexed by double-trees that permits local space-time adaptivity. In \S\ref{sec:impl}, we discuss the practical
implementation of the adaptive algorithm.
Finally, in \S\ref{sec:numerical}, we provide extensive numerical experiments to demonstrate the linear
runtime of the algorithm.

\subsection*{Notation}
In this work, by $C \lesssim D$ we will mean that $C$ can be bounded by a multiple of $D$, independently of parameters which C and D may depend on.
Obviously, $C \gtrsim D$ is defined as $D \lesssim C$, and $C\eqsim D$ as $C\lesssim D$ and $C \gtrsim D$.

For normed linear spaces $E$ and $F$, by $\cL(E,F)$ we will denote the normed linear space of bounded linear mappings $E \to F$,
and by $\Lis(E,F)$ its subset of boundedly invertible linear mappings $E \to F$.
We write $E \hookrightarrow F$ to denote that $E$ is continuously embedded into $F$.
For simplicity only, we exclusively consider linear spaces over the scalar field $\R$.

\section{Space-time adaptivity for a parabolic model problem}
\label{sec:problem}
In this section, we summarize the relevant parts of~\cite[\S2--5]{Stevenson2020b}.

Let $V, H$ be separable Hilbert spaces of functions on some ``spatial domain'' such that $V \hookrightarrow H$ with dense and compact embedding.
Identifying $H$ with its dual, we obtain the Gelfand triple
$V \hookrightarrow H \simeq H' \hookrightarrow V'$.

For a.e.
\[ t \in I:=(0,T), \]
let $a(t;\cdot,\cdot)$ denote a bilinear form on $V \times V$ so that for
any $\eta,\zeta \in V$, $t \mapsto a(t;\eta,\zeta)$ is measurable on $I$,
and such that for a.e.~$t\in I$,
\begin{alignat*}{3}
|a(t;\eta,\zeta)|
& \lesssim  \|\eta\|_{V} \|\zeta\|_{V} \quad &&(\eta,\zeta \in V) \quad &&\text{({\em boundedness})},
\nonumber \\
a(t;\eta,\eta) &\gtrsim \|\eta\|_{V}^2 \quad
&&(\eta \in {V}) \quad &&\text{({\em coercivity})}.
\end{alignat*}

With $(A(t) \cdot)(\cdot) := a(t; \cdot, \cdot) \in \Lis({V},V')$, given a forcing function $g$ and initial value $u_0$, we want to solve the {\em parabolic initial value problem} of
\begin{equation} \label{11}
\text{finding $u: I \to V$ such that} \quad
\left\{
\begin{array}{rl}
\frac{\D u}{\D t}(t) +A(t) u(t)& = g(t) \quad(t \in I),\\
u(0) & = u_0.
\end{array}
\right.
\end{equation}
\begin{example}
For the model problem of the \emph{heat equation} on
some spatial domain $\Omega \subset \R^d$ we select $V := H_0^1(\Omega)$, $H := L_2(\Omega)$, and $a(t; \eta, \zeta) := \int_\Omega \grad_{\vec x} \eta \cdot \grad_{\vec x} \zeta \dif {\vec x}$.
\end{example}
In our simultaneous space-time variational formulation, the parabolic problem is to find $u$ s.t.
\[
(B u)(v):=\int_I
\langle { \textstyle \frac{\D u}{\D t}}(t), v(t)\rangle_H +
a(t;u(t),v(t)) \D t = \int_I
\langle g(t), v(t)\rangle_H =:g(v)
\]
for all $v$ from some suitable space of functions of time and space.
One possibility to enforce the initial condition is by testing against additional test functions.
\begin{theorem}[{\cite{Schwab2009}}] \label{thm0} With $X:=L_2(I;{V}) \cap H^1(I;V')$, $Y:=L_2(I;{V})$, we have
$$
\left[\begin{array}{@{}c@{}} B \\ \gamma_0\end{array} \right]\in \Lis(X,Y' \times H),
$$
where for $t \in \bar{I}$, $\gamma_t\colon u \mapsto u(t,\cdot)$ denotes the trace map.
In other words,
\be \label{x12}
\text{finding $u \in X$ s.t.} \quad (Bu, \gamma_0 u) = (g, u_0) \quad \text{given} \quad (g, u_0) \in Y' \times H
\ee
is a well-posed simultaneous space-time variational formulation of \eqref{11}.
\end{theorem}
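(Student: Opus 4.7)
The plan is to establish the three standard ingredients of a bounded-invertibility result: continuity of $[B;\gamma_0]\colon X \to Y' \times H$, an inf-sup estimate from below, and surjectivity. Continuity of $B\colon X \to Y'$ is a Cauchy-Schwarz argument applied termwise: the time-derivative part is controlled by $\|u'\|_{L_2(I;V')}\|v\|_{L_2(I;V)}$ via the $V'$-$V$ duality (extending the $H$-pairing), while the bilinear-form part is controlled by the uniform $V$-$V$ boundedness of $a(t;\cdot,\cdot)$ assumed in the excerpt. Continuity of $\gamma_0\colon X \to H$ follows from the classical embedding $X \hookrightarrow C(\bar I; H)$, itself a consequence of the pointwise-in-time identity $\tfrac{d}{dt}\|u(t)\|_H^2 = 2\langle u'(t), u(t)\rangle_{V' \times V}$ valid on $X$.

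For the inf-sup estimate, I would construct, for every $u \in X$, a test function $v \in Y$ witnessing $(Bu)(v) + \|\gamma_0 u\|_H^2 \gtrsim \|u\|_X^2$ with $\|v\|_Y \lesssim \|u\|_X$. The natural choice is $v := u + w$, where $w(t) \in V$ solves the auxiliary elliptic problem $a_s(t; w(t), \zeta) = \langle u'(t), \zeta\rangle_{V'\times V}$ for all $\zeta \in V$, with $a_s$ denoting the symmetric part of $a$. Coercivity of $a_s$ together with Lax-Milgram yield $\|w(t)\|_V \eqsim \|u'(t)\|_{V'}$ uniformly in $t$, so $w \in Y$ with the desired bound. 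Expanding $(Bu)(v)$, using the definition of $w$ to rewrite $\int_I \langle u'(t), w(t)\rangle_{V'\times V}\,dt$ as $\int_I a_s(t; w, w)\,dt$, invoking coercivity on the $a(t; u, u)$ term, using Young's inequality to absorb the mixed $a(t; u, w)$ contribution, and applying the telescoping identity $\int_I \langle u'(t), u(t)\rangle_{V'\times V}\,dt = \tfrac12\|u(T)\|_H^2 - \tfrac12\|\gamma_0 u\|_H^2$, produces the required lower bound with the $-\tfrac12\|\gamma_0 u\|_H^2$ term moved to the right-hand side.

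For surjectivity I would argue by duality: time-reversal $t \mapsto T - t$ transforms the adjoint problem for $[B;\gamma_0]$ into a forward parabolic problem of exactly the same structural form, to which the inf-sup argument just described applies verbatim, now with $\gamma_T$ in place of $\gamma_0$. Injectivity of this adjoint then closes the range of $[B;\gamma_0]$, and the Banach closed-range theorem upgrades the inf-sup bound to full bounded invertibility $[B;\gamma_0] \in \Lis(X, Y' \times H)$. The main obstacle I anticipate is the bookkeeping in the inf-sup step: the Young constants must be tuned so that coercivity survives once the non-symmetric part of $a$ is absorbed into the $\|u\|_{L_2(I;V)}$ and $\|w\|_{L_2(I;V)}$ terms, and the $t$-measurability of $w$ together with its $Y$-integrability has to be deduced from the measurability of $t \mapsto a(t;\cdot,\cdot)$ and the uniform-in-$t$ coercivity constant, both of which are supplied by the hypotheses.
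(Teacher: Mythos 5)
The paper itself offers no proof of Theorem~\ref{thm0}: it is imported verbatim from \cite{Schwab2009}, so there is nothing internal to compare against. Your sketch reconstructs the standard argument from that literature (continuity, an inf-sup bound via a lifted test function, adjoint injectivity plus the closed range theorem), which is the right route; the continuity step and the identification $\int_I\langle u',w\rangle\,\mathrm{d}t=\int_I a_s(t;w,w)\,\mathrm{d}t$ are fine. However, two steps need repair. First, with the \emph{unscaled} choice $v:=u+w$, Young's inequality cannot rescue the mixed term for a general nonsymmetric $a$: after coercivity you are left, pointwise in $t$, with a lower bound of the form $\alpha\|u\|_V^2+\alpha\|w\|_V^2-M\|u\|_V\|w\|_V$, which is indefinite whenever the continuity constant satisfies $M>2\alpha$, and no tuning of Young parameters changes this because the loss is symmetric in $u$ and $w$. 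The fix is to put the scaling into the test function, e.g.\ $v:=u+\epsilon w$ with $\epsilon\leq \alpha^2/M^2$ (one still gets $\epsilon\alpha\|w\|_Y^2\gtrsim\epsilon\|\partial_t u\|_{Y'}^2$ from the $w$-part and retains a positive multiple of $\|u\|_Y^2$); note that Theorem~\ref{thm0} does not assume symmetry of $a$ --- that assumption is only made later in the paper --- so you cannot appeal to Cauchy--Schwarz in the $a$-inner product here.

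Second, the adjoint of $[B;\gamma_0]$ acts on $Y\times H$, not on $X$, so the inf-sup argument cannot be applied ``verbatim'' after time reversal. Given $(v,h)$ in the kernel of the adjoint, you must first test with separable functions $u(t,\cdot)=\eta(t)\zeta$, $\eta\in C_0^\infty(I)$, $\zeta\in V$, to identify $v'=A'v$ in the distributional sense, conclude $v\in H^1(I;V')$ (so that $v$ lies in the time-reversed copy of $X$ and possesses traces), and only then read off the terminal condition and invoke uniqueness for the resulting (time-reversed, forward) parabolic problem to get $v=0$ and subsequently $h=0$. With these two repairs --- the scaled test function and the regularity bootstrap for the adjoint variable --- your outline becomes a complete proof along the lines of the one in \cite{Schwab2009}.
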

We define $A \in \Lis(Y, Y')$ and $\partial_t \in \Lis(X, Y')$ as
\[
(Au)(v) := \int_I a(t; u(t), v(t)) \D t, \quad \text{and} \quad \partial_t := B - A.
\]
Following~\cite{Stevenson2020a}, we assume that $A$ is \emph{self-adjoint}.
Morever, in view of an efficient implementation, we assume that $A$ is a finite sum of tensor-product operators. If $A$ does not have this structure, one may alternatively consider (low-rank) tensor-product approximations of $A$, see e.g.~\cite{Hackbusch2012} for an overview.

We equip $Y$ and $X$ with \emph{`energy'-norms}
$$
\|\cdot\|_Y^2:=(A \cdot)(\cdot),\quad
\|\cdot\|_X^2:=\|\partial_t \cdot\|_{Y'}^2 + \|\cdot\|_Y^2 + \|\gamma_T \cdot\|_H^2,
$$
which are equivalent to the canonical norms on $Y$ and $X$.

The solution $u$ of \eqref{x12} equals the solution of the following minimization problem
\be\label{eqn:contmin}
u=\argmin_{w \in X} \|Bw-g\|_{Y'}^2+\|\gamma_0 w-u_0\|_H^2,
\ee
which in turn is the second component of the solution of
\begin{equation}\label{m1}
\text{finding $(\mu, u) \in Y \times X$ s.t.} \quad
\left[\begin{array}{@{}cc@{}} A & B\\ B' & -\gamma_0'\gamma_0 \end{array}\right]
\left[\begin{array}{@{}c@{}} \mu \\ u \end{array}\right]=
\left[\begin{array}{@{}c@{}} g \\ -u_0 \end{array}\right].
\end{equation}
Indeed, taking the Schur complement of~\eqref{m1} w.r.t.~the $Y$-block results in the Euler-Lagrange equations of~\eqref{eqn:contmin}.

\subsection{Discretizations}
Take a family $(X^\delta)_{\delta \in \Delta}$ of closed subspaces of $X$, and define
\begin{equation}\label{eq:galerkin}
u_\delta=\argmin_{w \in X^\delta} \|Bw-g\|_{Y'}^2+\|\gamma_0 w-u_0\|_H^2,
\end{equation}
being the best approximation to $u$ from $X^\delta$ w.r.t.~$\|\cdot\|_X$.
Solving this problem, however, is not feasible because of the presence of the dual norm.
Therefore, take $(Y^\delta)_{\delta \in \Delta}$ to be a family of closed subspaces of $Y$ such that
\begin{equation}
X^\delta \subseteq Y^\delta \quad (\delta \in \Delta), \quad \text{and} \quad \gamma_{\Delta} := \inf_{\delta \in \Delta} \inf_{0 \not= w \in X^\delta} \sup_{0 \not= v \in Y^\delta} \frac{(\partial_t w)(v)}{\|\partial_t w\|_{Y'} \|v\|_Y} > 0.
\label{eqn:stability}
\end{equation}
For $\udelta \in \Delta$ with $Y^{\udelta} \supseteq Y^\delta$, we replace $Y'$ by ${Y^{\udelta}}'$ in~\eqref{eq:galerkin} yielding the
approximation
\[
u^{\udelta \delta} = \argmin_{w \in X^\delta} \|Bw - g\|_{{Y^{\udelta}}'}^2 + \|\gamma_0 w - u_0\|_H^2.
\]
Notice that $u^{\udelta \delta}$ approximates $u_{\delta}$ in that $u^{\udelta \delta} = u_{\delta}$ when $Y^{\udelta} = Y$.

With $E_Y^{\udelta}: Y^{\udelta} \to Y$ and $E_X^\delta: X^\delta \to X$ denoting the trivial embeddings, $u^{\udelta \delta}$ is the second component of the solution of
\[
\begin{bmatrix} {E_Y^{\udelta}}' A E_Y^{\udelta} & {E_Y^{\udelta}}' B E_X^\delta \\ {E_X^\delta}' B' E_Y^{\udelta} & - {E_X^\delta}' \gamma_0' \gamma_0 E_X^\delta \end{bmatrix} \begin{bmatrix} \mu^{{\udelta} \delta} \\ u^{{\udelta} \delta} \end{bmatrix} = \begin{bmatrix} {E_Y^{\udelta}}' g \\ - {E_X^\delta}' \gamma_0' u_0 \end{bmatrix}.
\]
Taking the Schur complement w.r.t.~the $Y^{\udelta}$-block then leads to the equation
\begin{equation}
\label{eqn:discr-saddle}
\begin{split}
{E^\delta_X}'(B' E^{{\udelta}}_Y ({E_Y^{\udelta}}' A E_Y^{\udelta})^{-1} {E^{\udelta}_Y}' B &+\gamma_0'\gamma_0)E^\delta_X u^{{\udelta} \delta} \\
&= {E^\delta_X}' (B' E^{{\udelta}}_Y ({E_Y^{\udelta}}' A E_Y^{\udelta})^{-1} {E^{{\udelta}}_Y}' g+\gamma_0' u_0),
\end{split}
\end{equation}
which has a unique solution (cf.~\cite[Lem.~3.3]{Stevenson2020b}) that satisfies $\|u - u^{\udelta \delta}\|_X \leq \gamma_\Delta^{-1} \|u - u_\delta\|_X$ whenever $Y^{\udelta} \supseteq Y^\delta$; cf.~\cite[Thm.~3.7]{Stevenson2020a}.
For now, we assume the right-hand side of \eqref{eqn:discr-saddle} to be evaluated exactly. Later, in~\S\ref{sec:rhs}, we will discuss approximation of the right-hand side.

In view of obtaining an efficient solver,
we want to replace the inverses in~\eqref{eqn:discr-saddle} while aiming to preserve quasi-optimality of the solution.
To this end, let $K_Y^{\udelta} = {K_Y^{\udelta}}' \in \Lis({Y^{\udelta}}', Y^{\udelta})$ be a uniformly
optimal preconditioner for ${E_Y^{\udelta}}' A E_Y^{\udelta}$ that can be applied in linear complexity. Then, for some $\kappa_\Delta \geq 1$ we have
\[
\frac{((K^{\udelta}_Y)^{-1} v)(v)}{(Av)(v)} \in [\kappa^{-1}_\Delta, \kappa_\Delta] \quad (\delta \in \Delta, v \in Y^{\udelta}).
\]

\newer{Replacing $({E_Y^{\udelta}}' A E_Y^{\udelta})^{-1}$ by $K_Y^{\udelta}$, we denote the solution of~\eqref{eqn:discr-saddle} again by $u^{\udelta \delta}$. It} is quasi-optimal with $\|u - u^{\udelta \delta}\|_X \leq \tfrac{\kappa_\Delta}{\gamma_\Delta} \|u - u_\delta\|_X$; cf.~\cite[Rem.~3.8]{Stevenson2020a}.

\subsection{Adaptive refinement loop}
Our adaptive loop, given in Algorithm~\ref{alg:adaptive}, takes the familiar \underline{Solve}, \underline{Estimate}, \underline{Mark and refine} steps, and is driven by an efficient and reliable `hierarchical basis' a posteriori error estimator.
%
%

\newer{The adaptive loop below requires a saturation assumption.}
Define a \emph{partial order} on $\Delta$ by $\tilde \delta \succeq \delta$ whenever $X^{\tilde \delta} \supseteq X^{\delta}$.
Let $\delta \mapsto \udelta \succeq \delta$ be a mapping providing \emph{saturation} in that for some $\zeta < 1$,
\begin{equation}\label{eqn:saturation}
\|u - u_{\udelta}\|_X \leq \zeta \|u - u_\delta\|_X \quad (\delta \in \Delta).
\end{equation}
\newer{With this choice of $\udelta$, we are interested in finding $u^\delta := u^{\udelta \delta} \in X^\delta$ that solves
\be
\label{eqn:discr-schur}
\underbrace{{E^\delta_X}'(B' E^{{\udelta}}_Y K_Y^{{\udelta}} {E^{\udelta}_Y}' B +\gamma_0'\gamma_0)E^\delta_X}_{S^{\udelta \delta}:=}
u^{\delta}
=
\underbrace{{E^\delta_X}' (B' E^{{\udelta}}_Y K_Y^{\udelta} {E^{{\udelta}}_Y}' g+\gamma_0' u_0)}_{f^\delta :=}.
\ee}

Notice that~\eqref{eqn:discr-schur} is uniquely solvable even with $X^\udelta$ as `trial space', and we use this `room' between $X^\delta$ and $X^\udelta$ to our advantage.
Expanding $X^\delta$ to some intermediate space $X^\delta \subset X^{\tilde \delta} \subset X^\udelta$ yields a $u^{\tilde \delta}$ that is a better approximation to $u$ than $u^\delta$; cf.~\cite[Prop.~4.2]{Stevenson2020b}. This function will be the successor of $u^\delta$ in our loop, and we will show that the resulting sequence of functions converges $r$-linearly to $u$; see Algorithm~\ref{alg:adaptive} and Theorem~\ref{thm:adaptive}.

\subsubsection*{Solving}\label{S:solving}
Instead of solving the symmetric positive definite system~\eqref{eqn:discr-schur} exactly, we construct an approximate solution $\hat u^\delta$ using Preconditioned Conjugate Gradients (PCG).
To this end, let $K_X^\delta = {K_X^\delta}' \in \Lis({X^{\delta}}', X^{\delta})$ be a uniformly optimal preconditioner for $S^{\udelta \delta}$.
Then  $((K_X^\delta)^{-1}w)(w) \eqsim \|w\|^2_X \eqsim \|K_X^\delta S^{\udelta \delta}w\|^2_X $ for $w \in X^\delta$. Writing $w = K_X^\delta S^{\udelta \delta}(u^\delta - v^\delta$) reveals that this induces an algebraic error estimator
\be\label{eqn:algerr}
\beta^\delta(v^\delta) := \sqrt{(f^\delta - S^{ \udelta \delta} v^\delta)(K_X^\delta(f^\delta - S^{ \udelta \delta} v^\delta))} \eqsim \|u^\delta - v^\delta\|_X \quad (v^\delta \in X^\delta, \delta \in \Delta).
\ee
With $\hat u^\delta_k$ denoting the approximant at iteration $k$ of the PCG loop, $\beta^\delta(\hat u^\delta_k)$ is already available as $\sqrt{\beta_k}$, for $\beta_k$ the variable used in computing the next search direction.


\subsubsection*{Error estimation}
Let
$\Theta_\delta := \{\theta_\lambda: \lambda \in J_\delta\}$ be some uniformly $X$-stable basis satisfying  $X^\delta \oplus \spann \Theta_\delta = X^\udelta$, in that
\be \label{eqn:X-stable}
\|z + {\bf c}^\top \Theta_\delta\|_X^2 \eqsim \|z\|_X^2 + \|{\bf c}\|^2 \quad ({\bf c} \in \ell_2(J_\delta), z \in X^\delta, \delta \in \Delta).
\ee
Define the trivial embedding $P^\delta: X^\delta \to X^\udelta$. \newer{Akin to~\eqref{eqn:discr-schur}, we define $S^{\udelta \udelta}$ and  $f^{\udelta \udelta}$, and with it, the residual-based a posteriori error estimator ${\bf r}^\delta: X^\delta \to \ell_2(J_\delta)$, as}
\be\label{eqn:residual}
\begin{aligned}
&S^{\udelta \udelta} := \newer{{E^\udelta_X}' (B' E^{{\udelta}}_Y K_Y^{{\udelta}} {E^{\udelta}_Y}' B +\gamma_0'\gamma_0)E^\udelta_X}, \quad
f^{\udelta \udelta} := \newer{{E^\udelta_X}' (B' E^{{\udelta}}_Y K_Y^{\udelta} {E^{{\udelta}}_Y}' g+\gamma_0' u_0)}, \\
&{\bf r}^\delta({\hat u}^\delta) := (\newer{f^{\udelta \udelta}} - \newer{S^{\udelta \udelta}} P^\delta \hat u^\delta)(\Theta_\delta).
\end{aligned}
\end{equation}
For ${\hat u}^\delta$ close to $u^\delta$, the error estimator $\|{\bf r}^\delta({\hat u}^\delta)\|$ is reliable and efficient:


\begin{lemma}\label{lem:inexact}
Assume~\eqref{eqn:saturation} and~\eqref{eqn:X-stable}, $\tfrac{\kappa_\Delta}{\gamma_\Delta} < \tfrac{1}{\zeta}$, and fix some $\xi >0$ small enough. For $\hat u^\delta \in X^\delta$ satisfying $\beta({\hat u}^\delta) \leq \frac{\xi}{1- \xi} \| {\bf r}^\delta({\hat u}^\delta)\|$, we have
\[
\| {\bf r}^\delta({\hat u}^\delta) \| \eqsim \|u - {\hat u}^\delta\|_X \quad \text{and} \quad \|u - {\hat u}^\delta\|_X \lesssim \|u - u^\delta\|_X\quad (\delta \in \Delta).
\]
\end{lemma}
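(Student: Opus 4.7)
The plan is to sandwich $\|\mathbf{r}^\delta(\hat u^\delta)\|$ between the algebraic quantity $\|u^{\udelta\udelta} - u^\delta\|_X$ (where $u^{\udelta\udelta} \in X^\udelta$ is the analogous Galerkin solution with trial space $X^\udelta$, i.e.~$S^{\udelta\udelta} u^{\udelta\udelta} = f^{\udelta\udelta}$) and the true error $\|u - u^\delta\|_X$, before absorbing the perturbation separating $\hat u^\delta$ from $u^\delta$. Writing $S := S^{\udelta\udelta}$, the preconditioning hypothesis on $K_Y^\udelta$ together with the definition of $\|\cdot\|_X$ makes $(S\cdot)(\cdot)$ uniformly equivalent to $\|\cdot\|_X^2$ on $X^\udelta$.

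I would first treat the exact case $\hat u^\delta = u^\delta$ and establish $\|\mathbf{r}^\delta(u^\delta)\| \eqsim \|u^{\udelta\udelta} - u^\delta\|_X$. Since $u^\delta$ is the $S$-Galerkin projection of $u^{\udelta\udelta}$ onto $X^\delta$, the error $e := u^{\udelta\udelta} - P^\delta u^\delta$ is $S$-orthogonal to $X^\delta$. Splitting $e = z_e + \mathbf{a}^\top \Theta_\delta$ according to~\eqref{eqn:X-stable}, orthogonality gives $\|e\|_S^2 = (Se)(\mathbf{a}^\top \Theta_\delta) = \mathbf{a}^\top \mathbf{r}^\delta(u^\delta)$; combined with $\|\mathbf{a}\| \lesssim \|e\|_X$ (again by~\eqref{eqn:X-stable}) this yields $\|e\|_X \lesssim \|\mathbf{r}^\delta(u^\delta)\|$. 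Conversely, for any $\mathbf{c} \in \ell_2(J_\delta)$, $\mathbf{c}^\top \mathbf{r}^\delta(u^\delta) = (Se)(\mathbf{c}^\top \Theta_\delta) \lesssim \|e\|_X \|\mathbf{c}\|$, giving the matching upper bound.

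Next, I would bridge to the true error. Quasi-optimality together with saturation~\eqref{eqn:saturation} yields
\[ \|u - u^{\udelta\udelta}\|_X \leq \tfrac{\kappa_\Delta}{\gamma_\Delta}\|u - u_\udelta\|_X \leq \tfrac{\kappa_\Delta \zeta}{\gamma_\Delta}\|u - u_\delta\|_X \leq \tfrac{\kappa_\Delta \zeta}{\gamma_\Delta}\|u - u^\delta\|_X, \]
using that $u_\delta$ is the $\|\cdot\|_X$-best approximation in $X^\delta$. Under the hypothesis $\kappa_\Delta \zeta/\gamma_\Delta < 1$, the triangle inequality delivers $\|u^{\udelta\udelta} - u^\delta\|_X \eqsim \|u - u^\delta\|_X$, and combined with the previous step, $\|\mathbf{r}^\delta(u^\delta)\| \eqsim \|u - u^\delta\|_X$.

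Finally, the inexact case is routine perturbation. The same X-stability trick bounds $\|\mathbf{r}^\delta(\hat u^\delta) - \mathbf{r}^\delta(u^\delta)\| \lesssim \|\hat u^\delta - u^\delta\|_X \eqsim \beta(\hat u^\delta) \leq \tfrac{\xi}{1-\xi}\|\mathbf{r}^\delta(\hat u^\delta)\|$ by~\eqref{eqn:algerr} and the hypothesis on $\hat u^\delta$. For $\xi$ small enough this absorbs, yielding $\|\mathbf{r}^\delta(\hat u^\delta)\| \eqsim \|\mathbf{r}^\delta(u^\delta)\|$ and $\|\hat u^\delta - u^\delta\|_X \leq \tfrac12 \|u - u^\delta\|_X$; a final triangle inequality then produces both $\|\mathbf{r}^\delta(\hat u^\delta)\| \eqsim \|u - \hat u^\delta\|_X$ and $\|u - \hat u^\delta\|_X \lesssim \|u - u^\delta\|_X$. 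The principal obstacle is the first paragraph, where one must carefully combine $S$-Galerkin orthogonality with the X-stable decomposition~\eqref{eqn:X-stable}; everything after that is bookkeeping built directly into the hypotheses.
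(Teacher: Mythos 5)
Your argument is correct, but it follows a different route from the paper on the key step. The paper does not prove the exact-solution equivalence $\|{\bf r}^\delta(u^\delta)\|\eqsim\|u-u^\delta\|_X$ itself: it imports it wholesale from \cite[Prop.~4.4]{Stevenson2020b} (and likewise the Lipschitz bound $\|{\bf r}^\delta(u^\delta)-{\bf r}^\delta(\hat u^\delta)\|\lesssim\|u^\delta-\hat u^\delta\|_X$ from \cite[(4.13)]{Stevenson2020b}), and then performs exactly the perturbation bookkeeping you do in your last paragraph, using~\eqref{eqn:algerr} and the hypothesis on $\beta$ to absorb the $\xi\|\hat{\bf r}^\delta\|$ terms. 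You instead re-derive the cited ingredient: $S^{\udelta\udelta}$-Galerkin orthogonality of $u^{\udelta\udelta}-P^\delta u^\delta$ against $X^\delta$ combined with the splitting~\eqref{eqn:X-stable} gives $\|{\bf r}^\delta(u^\delta)\|\eqsim\|u^{\udelta\udelta}-u^\delta\|_X$, and quasi-optimality of $u^{\udelta\udelta}$ over $X^{\udelta}$, saturation~\eqref{eqn:saturation}, best approximation by $u_\delta$, and $\tfrac{\kappa_\Delta\zeta}{\gamma_\Delta}<1$ turn this into $\eqsim\|u-u^\delta\|_X$ — which is essentially the proof of the cited proposition, so your version buys a self-contained lemma at the cost of repeating the companion paper's work. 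One small gloss: the uniform equivalence $(S^{\udelta\udelta}w)(w)\eqsim\|w\|_X^2$ on $X^{\udelta}$ (which you use both for $\|e\|_S\eqsim\|e\|_X$ and for the boundedness step ${\bf c}^\top{\bf r}^\delta(u^\delta)\lesssim\|e\|_X\|{\bf c}\|$) does not follow from the preconditioner bounds on $K_Y^{\udelta}$ alone; it also needs the uniform inf-sup condition~\eqref{eqn:stability} for the pair $(X^{\udelta},Y^{\udelta})$ together with the norm equivalence $\|Bw\|_{Y'}^2+\|\gamma_0 w\|_H^2\eqsim\|w\|_X^2$ coming from well-posedness — both available here, so this is a presentational gap only, not a mathematical one.
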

\begin{proof}
For convenience, we write $\hat{\bf r}^\delta := {\bf r}^\delta({\hat u}^\delta)$ and ${\bf r}^\delta := {\bf r}^\delta(u^\delta)$.

By \eqref{eqn:saturation}, \eqref{eqn:X-stable} and $\tfrac{\kappa_\Delta}{\gamma_\Delta} < \tfrac{1}{\zeta}$,  \cite[Prop.~4.4]{Stevenson2020b} shows that
\begin{equation}\label{eqn:residual-estim}
\|{\bf r}^\delta\| \eqsim \|u - u^{\delta}\|_X \quad(\delta \in \Delta).
\end{equation}

\newer{From~\eqref{eqn:X-stable} one deduces that $\|{\bf r}^\delta - \hat{\bf r}^\delta\| \lesssim  \|u^\delta - {\hat u}^\delta\|_X$; cf.~\cite[(4.13)]{Stevenson2020b}.}
By assumption, for $\xi < 1$, we find $\beta^\delta({\hat u}^\delta) \lesssim \xi \| \hat{\bf r}^\delta\|$.
Combined this reveals
\begin{equation}\label{eqn:res-perturb}
\| {\bf r}^\delta - \hat{\bf r}^\delta\| \stackrel{\mathclap{\eqref{eqn:X-stable}}}{\lesssim}  \|u^\delta - {\hat u}^\delta\|_X
\stackrel{\mathclap{\eqref{eqn:algerr}}}{\eqsim} \beta^\delta({\hat u}^\delta) \lesssim \xi \| \hat {\bf r}^\delta\|.
\end{equation}
Using this, we can show reliability of the estimator by
\begin{align*}
\|u-{\hat u}^\delta\|_X &\leq \|u-u^\delta\|_X + \|u^\delta - {\hat u}^\delta\|_X\\
&\stackrel{\mathclap{\makebox[0pt][r]{\scriptsize \eqref{eqn:residual-estim},}\eqref{eqn:algerr}}}{\eqsim} \| {\bf r}^\delta\| + \beta^\delta({\hat u}^\delta)
\leq \| \hat{\bf r}^\delta\| + \| {\bf r}^\delta - \hat{\bf r}^\delta\| + \beta^\delta({\hat u}^\delta)  \stackrel{\mathclap{\eqref{eqn:res-perturb}}}{\lesssim}  \| \hat{\bf r}^\delta\|.
\end{align*}
For efficiency of the estimator, we deduce
\begin{align*}
\| \hat{\bf r}^\delta\| &\stackrel{\mathclap{\eqref{eqn:residual-estim}}}{\lesssim} \|u - u^\delta\|_X + \| {\bf r}^\delta - \hat{\bf r}^\delta\|  \leq \|u - {\hat u}^\delta\|_X + \|u^\delta - {\hat u}^\delta\|_X + \|{\bf r}^\delta - \hat{\bf r}^\delta\| \\
&\stackrel{\mathclap{\eqref{eqn:res-perturb}}}{\lesssim} \|u - {\hat u}^\delta\|_X + \xi\| \hat{\bf r}^\delta\|,
\end{align*}
so taking $\xi$ sufficiently small and kicking back $\| \hat{\bf r}^\delta\|$ yields
\begin{equation}\label{eqn:residual-eff-bla}
\| \hat{\bf r}^\delta\|    \lesssim \|u - {\hat u}^\delta\|_X.
\end{equation}

Similarly, from~\eqref{eqn:residual-estim} and~\eqref{eqn:res-perturb} it follows that
\begin{equation}\label{eqn:residual-eff}
\| \hat{\bf r}^\delta\|    \lesssim \|u -  u^\delta\|_X.
\end{equation}
We infer quasi-optimality of ${\hat u}^\delta$  from
\[
\|u - {\hat u}^\delta\|_X
\stackrel{\mathclap{\eqref{eqn:res-perturb}}}{\lesssim}  \|u - u^\delta\|_X + \xi \| \hat{\bf r}^\delta\|
\stackrel{\mathclap{\eqref{eqn:residual-eff}}}{\lesssim} \|u - u^\delta\|_X.\qedhere
\]
\end{proof}

\newer{In the solve step, we need to iterate PCG until $\beta^\delta(\hat u^\delta_k) / \|{\bf r}^\delta(\hat u^\delta_k)\|$ is small enough. In the algorithm below, this is ensured by the do-while loop which also avoids the (expensive) recomputation of the residual at every PCG iteration.}



\subsubsection*{Marking and refinement}
\newer{Denoting the output of the solve step by $\hat u^\delta$, we} drive the adaptive loop  by performing D\"orfler marking on the residual $\hat {\bf r}^\delta := {\bf r}^\delta(\hat u^\delta)$, i.e., for some $\theta \in (0,1]$, we mark the smallest set $J \subset J_\delta$ for which
$\|\hat {\bf r}^\delta|_J\| \geq \theta \|\hat {\bf r}^\delta\|$.
We then construct the smallest $\tilde \delta \succeq \delta$ such that $X^{\tilde \delta}$ contains $\spann \Theta_\delta|_J$.

\begin{algorithm}\label{alg:adaptive}
\KwData{$\theta \in (0, 1]$, $\xi \in (0, 1)$, $\delta := \delta_{\text{init}} \in \Delta$;}
\BlankLine
$t_{\delta} := \mathcal E^{\delta}(0) = \sqrt{ ({E_Y^\udelta}'g)( K_Y^\udelta {E_Y^\udelta}' g) + \|u_0\|^2_{H}}$\;
\Loop{
\Solve{
    \Do{$t_{\delta} > \xi e_\delta$}{%
    Compute $\hat u_*^{\delta} \in X^{\delta}$ with $\beta^\delta(\hat u_*^{\delta}) \leq t_{\delta}/2$\;
    $t_{\delta} := \beta^\delta(\hat u_*^\delta)$\;
    $e_{\delta} := \|{\bf r}^\delta({\hat u}_*^\delta)\| + t_\delta$\;
    }
    $\hat u^\delta := \hat u_*^\delta$\;
}
\underline{Estimate:} Set $\hat {\bf r}^{\delta} := {\bf r}^{\delta}(\hat u^{\delta})$\;
\underline{Mark:}
Mark a smallest $J \subset J_\delta$ for which $\|\hat {\bf r}^\delta|_J\| \geq \theta \|\hat{\bf r}^\delta\|$\;
\underline{Refine:}
Determine the smallest $\tilde \delta \in \Delta$ such that $X^{\tilde \delta} \supset X^\delta \oplus \spann \Theta_\delta|_J$\;
$t_{\tilde \delta} := e_\delta$, $\delta := \tilde \delta$\;
}
\caption{Space-time adaptive refinement loop.}
\end{algorithm}

\begin{theorem}[{\cite[Thm.~4.9 with $\eta = 0$]{Stevenson2020b}}]\label{thm:adaptive}
Assume~\eqref{eqn:saturation} and~\eqref{eqn:X-stable}. For $\xi$ and $\tfrac{\kappa_\Delta}{\gamma_\Delta}-1$ sufficiently small with $\tfrac{\kappa_\Delta}{\gamma_\Delta}-1 \downarrow 0$ when $\theta \downarrow 0$, the sequence of approximations produced by Algorithm~\ref{alg:adaptive} converges $r$-linearly to $u$, in that after every iteration,
$\|u - \hat u^{\delta}\|_X$ decreases with a factor at least $\rho < 1$.
\end{theorem}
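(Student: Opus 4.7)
The plan is to chain together three ingredients: (i) the do-while stopping criterion forces $\hat u^\delta$ into the regime of Lemma~\ref{lem:inexact}; (ii) D\"orfler-marked refinement produces a contraction of the Galerkin error $\|u-u^\delta\|_X$; (iii) the quasi-optimality estimates from Lemma~\ref{lem:inexact} transfer this contraction to the inexact iterates $\hat u^\delta$.

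First I would verify that at the end of the \underline{Solve} step, $\hat u^\delta$ satisfies the hypothesis $\beta^\delta(\hat u^\delta) \leq \tfrac{\xi}{1-\xi}\|{\bf r}^\delta(\hat u^\delta)\|$ of Lemma~\ref{lem:inexact}. By the exit condition $t_\delta \le \xi e_\delta$ with $e_\delta = \|\hat{\bf r}^\delta\| + t_\delta$ and $t_\delta = \beta^\delta(\hat u^\delta)$, a one-line rearrangement gives exactly this bound. Lemma~\ref{lem:inexact} then yields, simultaneously, the \emph{a posteriori} equivalence $\|\hat{\bf r}^\delta\| \eqsim \|u-\hat u^\delta\|_X$ and the quasi-optimality $\|u-\hat u^\delta\|_X \lesssim \|u - u^\delta\|_X$.

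Second, I would establish a one-step contraction on the idealized Galerkin approximation: if $\tilde\delta \succeq \delta$ is the refinement produced by D\"orfler marking on $\hat{\bf r}^\delta$ with parameter $\theta$, then $\|u-u^{\tilde\delta}\|_X \le \rho_0 \|u-u^\delta\|_X$ for some $\rho_0 = \rho_0(\theta,\zeta,\kappa_\Delta/\gamma_\Delta)<1$. The argument proceeds as follows. Using \eqref{eqn:X-stable} and the $X$-stability of $\Theta_\delta$, the marked coefficients $\hat{\bf r}^\delta|_J$ can be identified with a correction $w^\delta \in \spann \Theta_\delta|_J \subseteq X^{\tilde\delta}$ such that $u^\delta + w^\delta$ approximates $u^\udelta$ to within a factor $\sqrt{1-c\theta^2}$ of $\|u^\udelta-u^\delta\|_X$. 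A Pythagoras-type identity for the Galerkin projection onto $X^{\tilde\delta}$ (valid up to the quasi-optimality constant $\kappa_\Delta/\gamma_\Delta$) combined with saturation~\eqref{eqn:saturation} then pushes this into the claimed error contraction. Here the hypothesis that $\kappa_\Delta/\gamma_\Delta - 1$ is small enough is crucial: it is what prevents the loss-of-orthogonality term from cancelling the gain $c\theta^2$.

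Finally, I would close the loop by invoking Lemma~\ref{lem:inexact} on both ends:
\begin{equation*}
\|u-\hat u^{\tilde\delta}\|_X \lesssim \|u-u^{\tilde\delta}\|_X \le \rho_0 \|u - u^\delta\|_X \lesssim \rho_0 \|u-\hat u^\delta\|_X,
\end{equation*}
which yields the $r$-linear reduction factor $\rho < 1$ provided $\xi$ is chosen small enough that the hidden constants do not dominate $\rho_0$. The main obstacle is step~2: quantifying how much of the residual norm $\|\hat{\bf r}^\delta\|$ is actually captured by the enlarged Galerkin solution on $X^{\tilde\delta}$. This is the non-trivial part of~\cite[Thm.~4.9]{Stevenson2020b}, and it hinges on the interplay between the hierarchical-basis stability~\eqref{eqn:X-stable}, the saturation constant $\zeta$, and the preconditioning ratio $\kappa_\Delta/\gamma_\Delta$; all other pieces are essentially book-keeping around Lemma~\ref{lem:inexact} and the definition of the algorithm.
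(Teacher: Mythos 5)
You follow the same route as the paper: the do-while exit condition rearranges to $\beta^\delta(\hat u^\delta)\le\tfrac{\xi}{1-\xi}\|{\bf r}^\delta(\hat u^\delta)\|$ so that Lemma~\ref{lem:inexact} applies, a contraction for the exact approximations $u^\delta$ under D\"orfler marking is invoked, and the efficiency/quasi-optimality bounds transfer that contraction to the computed iterates $\hat u^\delta$; the paper's proof has exactly this skeleton. Two points in your step (ii), however, are genuine gaps. First, the contraction that is actually available (in the paper, \cite[Prop.~4.3]{Stevenson2020b}) is formulated for D\"orfler marking with respect to the \emph{exact} residual ${\bf r}^\delta(u^\delta)$, whereas the algorithm marks on $\hat{\bf r}^\delta={\bf r}^\delta(\hat u^\delta)$. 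The paper bridges this by first proving $\|\hat{\bf r}^\delta-{\bf r}^\delta\|\lesssim\xi\|{\bf r}^\delta\|$ (kicking back in \eqref{eqn:res-perturb}) and then converting $\|\hat{\bf r}^\delta|_J\|\ge\theta\|\hat{\bf r}^\delta\|$ into $\|{\bf r}^\delta|_J\|\ge\hat\theta\|{\bf r}^\delta\|$ for some $\hat\theta=\hat\theta(\theta)>0$ once $\xi$ is small. Your claimed $\rho_0=\rho_0(\theta,\zeta,\kappa_\Delta/\gamma_\Delta)$ skips this transfer (it cannot be independent of the algebraic error without it), and your closing remark about hidden constants only concerns the final chaining, not the marking step. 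Second, the Pythagoras-type argument you sketch for the contraction does not apply verbatim: $u^\delta$ and $u^{\tilde\delta}$ are not best approximations with respect to a fixed inner product but solutions of the preconditioned Schur problems \eqref{eqn:discr-schur}, whose operator $S^{\udelta\delta}$ changes with $\delta$ and is only spectrally equivalent (up to $\kappa_\Delta/\gamma_\Delta$) to such a projection; this is precisely why the paper cites \cite[Props.~4.2--4.3]{Stevenson2020b} rather than reproving them. Citing that result, as you in effect do, is fine --- but then the marking-transfer step above is indispensable.

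One further caution on your last display: the factors hidden in ``$\lesssim$'' there must be of the form $1+\mathcal{O}(\xi)$ for ``choose $\xi$ small'' to produce $\rho<1$; a fixed constant $C>1$ taken from the bare statement of Lemma~\ref{lem:inexact} does not shrink with $\xi$. This refinement follows from the estimates \eqref{eqn:res-perturb}, \eqref{eqn:residual-eff} and \eqref{eqn:residual-eff-bla} inside the lemma's proof, which give $\|u^{\tilde\delta}-\hat u^{\tilde\delta}\|_X\lesssim\xi\|u-u^{\tilde\delta}\|_X$ and $\|u^{\delta}-\hat u^{\delta}\|_X\lesssim\xi\|u-\hat u^{\delta}\|_X$ as in the paper's concluding display, not from the statement of the lemma alone, so you should make it explicit.
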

\begin{remark}
In a practical implementation, to ensure termination, Algorithm~\ref{alg:adaptive} has to be complemented by an appropriate stopping criterium; cf.~\cite[Alg.~4.8]{Stevenson2020b}.
\end{remark}
%
\begin{proof}
For convenience, we denote ${\bf r}^\delta := {\bf r}^\delta(u^\delta)$ and $\hat{\bf r}^\delta := {\bf r}^\delta({\hat u}^\delta)$.  The stopping criterium of the solve step ensures that
\newer{$\beta^\delta({\hat u}^\delta) \leq \xi \big(\|\hat{\bf r}^\delta\| + \beta^\delta({\hat u}^\delta)\big)$},
so for $\xi < 1$ we are in the situation of Lemma~\ref{lem:inexact}.

We have\[
\|\hat{\bf r}^\delta  - {\bf r}^\delta\| \stackrel{\mathclap{\eqref{eqn:res-perturb}}}{\lesssim}
\xi \| \hat{\bf r}^\delta\|
\leq \xi \big(\|{\bf r}^\delta\| +   \|\hat{\bf r}^\delta  - {\bf r}^\delta\|\big),\] so
taking $\xi$ sufficiently small and kicking back $\|\hat{\bf r}^\delta  - {\bf r}^\delta\|$ yields
\begin{equation}\label{eqn:residual-perturb-bla}
\|\hat{\bf r}^\delta  - {\bf r}^\delta\| \lesssim \xi \| {\bf r}^\delta\|.
\end{equation}
After marking, we have
$\| \hat{\bf r}^\delta\| \leq \theta^{-1} \| \hat{\bf r}^\delta|_J\|$, which shows that
\[
\| {\bf r}^\delta \|  \stackrel{\mathclap{\eqref{eqn:res-perturb}}}{\lesssim} \| \hat{\bf r}^\delta\|
\lesssim \| \hat{\bf r}^\delta|_J\|\\ \leq \| {\bf r}^\delta|_J\| + \| {\bf r}^\delta - \hat{\bf r}^\delta\|  \stackrel{\mathclap{\eqref{eqn:residual-perturb-bla}}}{\lesssim} \| {\bf r}^\delta|_J\| + \xi \| {\bf r}^\delta\|,
\]
so for $\xi$ small enough, kicking back $\| {\bf r}^\delta\|$  reveals that  for a $ \hat \theta > 0$ dependent on $\theta$,
\[
\| {\bf r}^\delta|_J\| \geq \hat \theta \|{\bf r}^\delta\|.
\]
From \cite[Prop.~4.3]{Stevenson2020b} we now find that, for $\frac{\kappa_\Delta}{\gamma_\Delta} - 1\downarrow 0$ when $\theta \downarrow 0$, there is a $\bar{\rho} < 1$ for which
\begin{equation}\label{eqn:contract}
\| u - u^{\tilde \delta} \|_X \leq \bar{\rho} \|u - u^\delta\|_X.
\end{equation}

Combining the results shows that
\begin{align*}
\| u - {\hat u}^{\tilde \delta}\|_X &\leq \|u - u^{\tilde \delta}\|_X + \|u^{\tilde \delta} - {\hat u}^{\tilde \delta}\|_X \\
&\stackrel{\mathclap{\makebox[0pt][r]{\scriptsize\eqref{eqn:res-perturb},}\eqref{eqn:residual-eff}}}{\leq} (1 + \mathcal{O}(\xi)) \|u - u^{\tilde \delta}\|_X\\
&\stackrel{\mathclap{\eqref{eqn:contract}}}{\leq} (1 + \mathcal{O}(\xi)) \bar \rho \|u - u^{\delta}\|_X\\
&\leq (1 +  \mathcal{O}(\xi)) \bar \rho (\|u - {\hat u}^{\delta}\|_X + \|u^{\delta} - {\hat u}^{\delta}\|_X)\\
&\stackrel{\mathclap{\makebox[0pt][r]{\scriptsize\eqref{eqn:res-perturb},}\eqref{eqn:residual-eff-bla}}}{\leq} \underbrace{(1 +  \mathcal{O}(\xi)) \bar \rho}_{=: \rho}\|u - {\hat u}^\delta\|_X,
\end{align*}
so for $\xi$ small enough, $\rho < 1$ and the proof of $r$-linear convergence is complete.
\end{proof}

\subsection{Adaptive trial- and test spaces}
\newer{The convergence rate of our adaptive loop} is determined by the approximation properties of the family $(X^\delta)_{\delta \in \Delta}$.
We want to construct a family that allows for \emph{local} refinements. Here, the crucial problem is guaranteeing the inf-sup stability condition~\eqref{eqn:stability}.
It is known that inf-sup stability is satisfied for \emph{full} tensor-products of (non-uniform) finite element spaces, and in~\cite[Prop.~4.2]{Andreev2013}, this result was generalized to families of \emph{sparse} tensor-products. Unfortunately,
neither family allows for adaptive refinements both locally in time and space.

In \S\ref{sec:heat} we will solve \newer{this} by first equipping $X$ with a tensor-product of (infinite) bases: a wavelet basis $\Sigma$ in time, and a hierarchical basis in space.
We then construct $X^\delta$ as the span of a (finite) subset of this tensor-product basis, which we grow
by adding particular functions.

By imposing a \emph{double-tree} constraint on the index set of the basis of $X^\delta$, we can apply tensor-product operators in linear complexity; see \S\ref{sec:lincomp}. Moreover, this constraint implies that for our model problem
the inf-sup condition~\eqref{eqn:stability} is satisfied and  we can construct optimal preconditioners $K_Y^\udelta$ and $K_X^\delta$.


\section{The application of linear operators in linear complexity}\label{sec:lincomp}
\newer{{\noindent}An efficient implementation of our adaptive loop requires the efficient application of the operators ${E_Y^\udelta}' B E_X^\delta$ and ${E_X^\delta}' \gamma_0' \gamma_0 E_X^\delta$ appearing in~\eqref{eqn:discr-schur}. Both terms are finite sums of tensor-products of operators in time and space.
When we equip our trial and test spaces with tensor-products of multilevel bases, it turns out that we can evaluate these operators in linear complexity.}

\newer{More precisely, this section will} show the abstract result that given
\begin{itemize}
\item tensor-products $\Psi := \Psi^0 \times \Psi^1$, $\breve \Psi := \breve \Psi^0 \times \breve \Psi^1$ of multilevel bases $\Psi^0$, $\Psi^1$, $\breve \Psi^0$, $\breve \Psi^1$ indexed by $\vee^0$, $\vee^1$, $\breve \vee^0$, $\breve \vee^1$, and
\item (finite) subsets
${\bf \Lambda} \subset \vee^0 \times \vee^1, \breve {\bf \Lambda} \subset \breve \vee^0 \times \breve \vee^1$ that are \emph{double-trees}, and
\item linear operators $A_i: \spann \Psi^i \to (\spann \breve \Psi^i)'$ that are \emph{local} ($i \in \{0,1\}$),
\end{itemize}
we can apply the matrix $((A_0 \otimes A_1) \Psi|_{\bf \Lambda})(\breve \Psi|_{\breve{\bf \Lambda}})$ 
in $\mathcal O(\# {\bf \Lambda} + \# \breve{\bf \Lambda})$ operations even though this matrix is not uniformly sparse.

\begin{example}\label{ex:B}
For our model problem, \newer{$\Psi^0$ and $\breve \Psi^0$ will be wavelets for $H^1(I)$ or $L_2(I)$ in time, and $\Psi^1 = \breve \Psi^1$ will be a hierarchical finite element basis for $H_0^1(\Omega)$ in space}. We will apply the result of this section to the operators $\gamma_0' \gamma_0$ and $B = \partial_t + A$.
\end{example}

We will achieve this complexity using a variant of the \emph{unidirectional principle}.
Denote with $I_{\bf \Lambda}$ the extension with zeros of a vector supported on ${\bf \Lambda}$ to one on $\vee^0 \times \vee^1$, and with $R_{\bf \Lambda}$ its adjoint; define $I_{\breve {\bf \Lambda}}$ and $R_{\breve {\bf \Lambda}}$ analogously.
Define ${\bf A}_i := (A_i \Psi^i)(\breve \Psi^i)$. We will split ${\bf A}_0$ in its upper and strictly lower triangular parts ${\bf U}_0$ and ${\bf L}_0$, so that
\[
R_{\breve {\bf \Lambda}} ({\bf A}_0 \otimes {\bf A}_1) I_{\bf \Lambda} =
R_{\breve {\bf \Lambda}} ({\bf L}_0 \otimes \Id)(\Id \otimes {\bf A}_1) I_{\bf \Lambda} +
R_{\breve {\bf \Lambda}} ({\bf U}_0 \otimes \Id)(\Id \otimes {\bf A}_1) I_{\bf \Lambda}.
\]
This in itself is not useful, as $(\Id \otimes {\bf A}_1) I_{\bf \Lambda}$ maps into a vector space which dimension we cannot control. However, the restriction $R_{\breve{\bf \Lambda}}$ gives us elbow room: in Theorem~\ref{thm1} we construct double-trees ${\bf \Sigma}, {\bf \Theta}$ with $\# {\bf \Sigma} + \# {\bf \Theta} \lesssim \# \breve {\bf \Lambda} + \# {\bf \Lambda}$ s.t.
\be\label{eqn:split}
\begin{cases}
R_{\breve {\bf \Lambda}} ({\bf L}_0 \otimes \Id)(\Id \otimes {\bf A}_1) I_{\bf \Lambda} =
R_{\breve {\bf \Lambda}} ({\bf L}_0 \otimes \Id)R_{\bf \Sigma} I_{\bf \Sigma}(\Id \otimes {\bf A}_1) I_{\bf \Lambda}, \\
R_{\breve {\bf \Lambda}} ({\bf U}_0 \otimes \Id)(\Id \otimes {\bf A}_1) I_{\bf \Lambda} =
R_{\breve {\bf \Lambda}} ({\bf U}_0 \otimes \Id)R_{\bf \Theta} I_{\bf \Theta}(\Id \otimes {\bf A}_1) I_{\bf \Lambda}.
\end{cases}
\ee
These right hand sides we \emph{can} apply efficiently, and their application boils down to applications of ${\bf L}_0$, ${\bf U}_0$, and ${\bf A}_1$ in a \emph{single} coordinate direction only.
Simple matrix-vector products are inefficient though, as these matrices are again not uniformly sparse. However, by using the properties of a double-tree and the sparsity of the operator in \emph{single scale}, we can evaluate ${\bf U}_0, {\bf L}_0$ and  ${\bf A}_1$ in linear time; see \S\ref{sec:eval_time}.


We follow the structure of~\cite[\S3]{Kestler2014a}, which applies the aforementioned idea to \emph{multi-trees} though with a slightly more restrictive definition of a \emph{tree}.
For readability, we defer the proofs of Theorems~\ref{thm:eval},~\ref{thm:evalupp},~\ref{thm:evallow}, and~\ref{thm1} to Appendix~\ref{app:proofs}.

\subsection{Evaluation of linear operators w.r.t.~trees.}
\label{sec:eval_time}
Let $\Psi$ be a (multilevel) collection of functions on some domain $Q$.
\begin{example}
In our application, $Q$ will be either the time interval $I$ with $\Psi$ being a collection of wavelets, or the spatial domain $\Omega$, in which case $\Psi$ is a collection of hierarchical basis functions.
\end{example}

Writing $\Psi=\{\psi_\lambda : \lambda \in \vee\}$, we assume that the $\psi_\lambda$ are \emph{locally supported} in the sense that with $|\lambda| \in \N_0$ denoting the \emph{level} of $\lambda$,
\begin{align} \label{c1}
& \sup_{\lambda \in \vee} 2^{|\lambda|} \diam{\supp \psi_\lambda} <\infty,\\
\label{c2}
& \sup_{\ell \in \N_0} \sup_{x \in Q} \#\{ \lambda \in \vee : |\lambda|=\ell \wedge
\supp \psi_\lambda \cap B(x;2^{-\ell}) \neq\emptyset\} <\infty.
\end{align}

We will refer to the functions $\psi_\lambda$ as being {\em wavelets}, although not necessarily they have vanishing moments or other specific wavelet properties.

For $\ell \in \N_0$, and any $\Lambda \subset \vee$, we set $\Lambda_\ell:=\{\lambda \in \Lambda  : |\lambda|=\ell\}$ and
$\Lambda_{\ell\uparrow}:=\{\lambda \in \Lambda  : |\lambda| \geq \ell\}$, and write $\Psi_\ell:=\Psi|_{\vee_\ell}$.

For $\ell \in \N_0$, we assume a collection
$
\Phi_\ell=\{\phi_\lambda : \lambda \in \Delta_\ell\},
$
whose members will be referred to as being {\em scaling functions},
with
\begin{align} \label{c4}
& \Span \Phi_{\ell+1} \supseteq  \Span \Phi_{\ell} \cup \Psi_{\ell+1},\quad \Phi_0=\Psi_0 \quad(\Delta_0:=\vee_0), \\
\label{c5}
& \sup_{\ell \in \N_0}\sup_{\lambda \in \Delta_\ell} 2^{\ell} \diam{\supp \phi_\lambda} <\infty, \\
\label{c6}
& \sup_{\ell \in \N_0} \sup_{x \in Q} \#\{\lambda \in \Delta_\ell  :  \supp \phi_\lambda \cap B(x;2^{-\ell}) \neq \emptyset\} <\infty,\\
\label{c7}
& \{\phi_\lambda|_\Sigma  :  \lambda \in \Delta_\ell,\,\phi_\lambda|_\Sigma \not\equiv 0\} \text{ is independent } (\text{for all open } \Sigma \subset Q,\,\ell \in \N_0).
\end{align}
W.l.o.g.~we assume that the index sets $\Delta_\ell$ for different $\ell$ are mutually disjoint, and set $\Phi:=\cup_{\ell \in \N_0} \Phi_\ell$ with index set $\Delta:=\cup_{\ell \in \N_0} \Delta_\ell$. For $\lambda \in \Delta$, we set $|\lambda|:=\ell$ when $\lambda \in \Delta_\ell$.

Viewing $\Psi_\ell$, $\Phi_\ell$ as column vectors, the assumptions we made so far guarantee the existence of matrices ${\mathfrak p}_{\ell}$, ${\mathfrak q}_{\ell}$ such that
\[
\left[\begin{array}{@{}cc@{}} (\Phi_{\ell-1})^\top & (\Psi_\ell)^\top \end{array} \right] = (\Phi_{\ell})^\top
\left[\begin{array}{@{}cc@{}} {\mathfrak p}_{\ell} & {\mathfrak q}_{\ell} \end{array}\right],
\]
where the number of non-zeros per row and column of ${\mathfrak p}_{\ell}$ and ${\mathfrak q}_{\ell}$ is finite, uniformly in the rows and columns and in $\ell \in \N$ (here also \eqref{c7} has been used).
We refer to ${\mathfrak p}_\ell$ as the \emph{prolongation matrix}. \newer{Columns of $\mathfrak p_\ell$ contain the \emph{mask} of the scaling functions, whereas columns of ${\mathfrak q}_\ell$ contain the mask of the wavelets}.

To each $\lambda \in \vee$ with $|\lambda|>0$, we associate one or more $\mu \in \vee$ with $|\mu|=|\lambda|-1$ and $|\supp \psi_\lambda \cap \supp \psi_\mu|>0$.
We call $\mu$ a {\em parent} of $\lambda$, and so $\lambda$ a {\em child} of $\mu$.

To each $\lambda \in \vee$, we associate some neighbourhood  $S(\lambda)$ of $\supp \psi_\lambda$, with diameter $\lesssim2^{-|\lambda|}$, such that for $|\lambda|>0$,
$S(\lambda) \subset \cup_{\mu \in \parent(\lambda)} S(\mu)$.

\begin{remark}
Such a neighborhood always exists even when a child has only one parent. Indeed with $C:=\sup_{\lambda \in \vee} 2^{|\lambda|} \diam{\supp \psi_\lambda}$ and
$S(\lambda):=\{x \in Q  : \dist(x,\supp \psi_\lambda) < C 2^{-|\lambda|}\}$,
for $\mu$ being a parent of $\lambda$ and  $x \in S(\lambda)$, $\dist(x,\supp \psi_\mu)\leq \dist(x,\supp \psi_\lambda)+\diam{\supp \psi_\lambda}< 2 C 2^{-|\lambda|}=C 2^{-|\mu|}$, i.e.,
$x \in S(\mu)$.
\end{remark}

\begin{definition}[Tree]
A finite $\Lambda \subset \vee_{\ell \uparrow}$ is called an \emph{$\ell$-tree}, or simply a \emph{tree} when $\ell=0$,  when for any $\lambda \in \Lambda$ its parents in $\vee_{\ell \uparrow}$ are in $\Lambda$. This is not a tree in the graph-theoretical sense, but rather one in the sense of a family history tree.
\end{definition}

\begin{example}[Hierarchical basis in one dimension]\label{ex:time-hbf}
Figure~\ref{fig:time-hbf} shows an example multilevel collection $\Psi$ of functions defined on the interval $[0,1]$. Its index set $\vee_{\mathfrak I}$ with parent-child relations is shown left, with a tree $\Lambda \subset \vee_{\mathfrak I}$ visualised in red. This collection is called the \emph{hierarchical basis}. With $S(\lambda) := \supp \psi_\lambda$ for $\lambda \in \vee_{\mathfrak I}$, the hierarchical basis satisfies conditions mentioned above.

\begin{figure}[ht]
\centering
\begin{tikzpicture}[scale=0.75]
    \fill[black!5] (-2.25,2) rectangle (14.5,1); \node[right] at (-2,1.5){$\ell=0$};
    \fill[black!0] (-2.25,1) rectangle (14.5,0); \node[right] at (-2,.5){$\ell=1$};
    \fill[black!5] (-2.25,0) rectangle (14.5,-1); \node[right] at (-2,-.5){$\ell=2$};
    \fill[black!0] (-2.25,-1) rectangle (14.5,-2); \node[right] at (-2,-1.5){$\ell=3$};
    \begin{scope}[shift={(0,.35)}]
    \node[red] at (0,1){\textbullet}; \node[red,above] at (0,1){$_{(0,0)}$};
    \node[red] at (4,1){\textbullet}; \node[red,above] at (4,1){$_{(0,1)}$};
    
    \draw[red] (0,1) -- (2,0) -- (4,1);
    \draw[red] (2,0) -- (1,-1);
    \draw (2,0) -- (3,-1);
    \node[red] at (2,0){\textbullet}; \node[red,above] at (2,0){$_{(1,0)}$}; 
    \node[red] at (1,-1){\textbullet}; \node[red,above] at (0.9,-1){$_{(2,0)}$};
    \node at (3,-1){\textbullet}; \node[above] at (3.1,-1){$_{(2,1)}$};
    
    \draw (1,-1) -- (0.5,-2);
    \node at (0.5,-2){\textbullet}; \node[above] at (0.35,-2){$_{(3,0)}$};
    \draw[red] (1,-1) -- (1.5,-2);
    \node[red] at (1.5,-2){\textbullet}; \node[red,above] at (1.65,-2){$_{(3,1)}$};
    \draw (3,-1) -- (2.5,-2);
    \node at (2.5,-2){\textbullet}; \node[above] at (2.35,-2){$_{(3,2)}$};
    \draw (3,-1) -- (3.5,-2);
    \node at (3.5,-2){\textbullet}; \node[above] at (3.65,-2){$_{(3,3)}$};
    
    \draw[dashed] (3.75,-2.65) -- (3.5,-2);
    \draw[dashed] (2.75,-2.65) -- (2.5,-2);
    \draw[dashed] (1.75,-2.65) -- (1.5,-2);
    \draw[dashed] (0.75,-2.65) -- (0.5,-2);
    \draw[dashed] (3.25,-2.65) -- (3.5,-2);
    \draw[dashed] (2.25,-2.65) -- (2.5,-2);
    \draw[dashed] (1.25,-2.65) -- (1.5,-2);
    \draw[dashed] (0.25,-2.65) -- (0.5,-2);
    \end{scope}
    \node[below] at (1.5,-3){Index set $\vee_{\mathfrak I}$ and tree ${\color{red} \Lambda} \subset \vee_{\mathfrak I}$};
    
    \begin{scope}[shift={(5.25,0)}]
    \draw (4.5,1.0) -- (4.5,1.75);
    \draw (4.4,1.0) -- (4.6,1.0);
    \draw (4.4,1.75) -- (4.6,1.75);
    \node at (4.25,1.75){$_1$};
    \node at (4.25,1.0){$_0$};
    \draw (0, 1) -- (4, 1); \draw[red] (0, 1.75) -- (4,1); \draw[red] (4, 1.75) -- (0,1);
    \node[red] at (0,1){\textbullet}; \node[red] at (4,1){\textbullet};
    \draw (0,0) -- (4,0); \draw[red] (0,0) -- (2,0.75) -- (4,0);
    \node[red] at (2,0){\textbullet};
    \draw (0,-1) -- (4,-1); \draw[red] (0,-1) -- (1,-0.25) -- (2,-1); \draw (2,-1) -- (3,-0.25) -- (4,-1);
    \node[red] at (1,-1){\textbullet}; \node at (3,-1){\textbullet};
    \draw (0,-2) -- (4,-2);
    \draw (0,-2) -- (0.5,-1.25) -- (1,-2);
    \draw[red] (1,-2) -- (1.5,-1.25) -- (2,-2);
    \draw (2,-2) -- (2.5,-1.25) -- (3,-2) -- (3.5,-1.25) -- (4,-2);
    \node at (0.5,-2){\textbullet}; \node[red] at (1.5,-2){\textbullet};
    \node at (2.5,-2){\textbullet}; \node at (3.5,-2){\textbullet};
    
    \node[below] at (2,-3){Multilevel functions $\Psi$};
    \end{scope}
    
    \begin{scope}[shift={(10,0)}]
    \draw (0, 1) -- (4, 1);
    \draw (0, 1.75) -- (4,1);
    \draw (4, 1.75) -- (0,1);
    
    \draw (0,0) -- (4,0);
    \draw (0,0) -- (2,0.75) -- (4,0);
    \draw (0,0.75) -- (2,0) -- (4,0.75);
    
    \draw (0,-1) -- (4,-1);
    \draw (0,-1) -- (1,-0.25) -- (2,-1) -- (3,-0.25) -- (4,-1);
    \draw (0,-0.25) -- (1,-1) -- (2,-0.25) -- (3,-1) -- (4,-0.25);
    
    \draw (0,-2) -- (4,-2);
    \draw (0,-2) -- (0.5,-1.25) -- (1,-2) -- (1.5,-1.25) -- (2,-2) -- (2.5,-1.25) -- (3,-2) -- (3.5,-1.25) -- (4,-2);
    \draw (0,-1.25) -- (0.5,-2) -- (1,-1.25) -- (1.5,-2) -- (2,-1.25) -- (2.5,-2) -- (3,-1.25) -- (3.5,-2) -- (4,-1.25);
    
    \node[below] at (2,-3){Scaling functions $\Phi$};
    \end{scope}
\end{tikzpicture}
\vspace{-1em}
\caption{Hierarchical basis for the interval $[0,1]$.}
\label{fig:time-hbf}
\end{figure}
\end{example}

\subsubsection{A routine \texttt{eval}} \label{Seval}
Let $(\Psi, \Phi)$ and $(\breve{\Psi}, \breve \Phi)$ satisfy the conditions of the previous subsection, and let $A \colon\Span \Phi \to (\Span \breve{\Phi})'$ be \emph{local} in that $(Au)(v)=(Au|_{\supp v})(v)$.
\newer{Typically, $A$ is a (partial) differential operator in variational form; e.g.~$A \in \cL(H^1(I), L_2(I)')$ with $(Au)(v) = \int_I \tfrac{\dif u}{\dif t} v \dif t$.}
For trees $\Lambda \subset \vee$ and $\breve \Lambda \in \breve \vee$, we are interested in the efficient application of the matrix $(A \Psi|_{\Lambda})(\breve \Psi|_{\breve \Lambda})$.

Just for brevity of the following argument, assume $\Psi = \breve \Psi$ and $\Phi = \breve \Phi$.  The matrix $(A \Psi|_{\Lambda})(\Psi|_{\Lambda})$ is not uniformly sparse, so a straight-forward matrix-vector product is not of linear complexity.
However, for $\Lambda$ a uniform tree up to level $\ell$, i.e.~$\Lambda = \{\lambda \in \vee: |\lambda| \leq \ell\}$, a solution is
provided by the multi- to single-scale transform $T$ characterized by $\Psi|_\Lambda = T^\top \Phi_{\ell}$ through the equality $(A \Psi|_{\Lambda})(\Psi|_{\Lambda}) = T^\top (A \Phi_\ell)(\Phi_\ell) T$, as the transforms can be applied in linear complexity and the single-scale matrix is uniformly sparse.

For general trees however, we don't have $\dim \Phi_\ell \lesssim \dim \Psi|_{\Lambda}$ so the previous approach is not of linear complexity.
Clever level-by-level multi-to-singlescale transformations and the prolongation of \emph{only} relevant functions \emph{does} allow applying $(A \Psi|_{\Lambda})(\breve \Psi|_{\breve \Lambda})$ in linear complexity; see Algorithm~\ref{alg:eval} below.

On several places the restriction of a vector (of scalars or of functions) to its indices in some subset of the index set should be read as the vector of full length where the entries with indices outside this subset are replaced by zeros.
For index sets $\Delta$ and $\breve \Delta$, matrix $\mathfrak{m} \in \R^{\# \breve \Delta \times \# \Delta}$, and subset $\Pi \subset \Delta$, we write $\supp(\mathfrak{m}, \Pi)  \subset \breve \Delta$ for the index set corresponding to the image of $\mathfrak{m}$ under $\{\vec x|_{\Pi}   :  \vec x \in \R^{\# \Delta}\}$.

\medskip

\begin{algorithm}[ht]\label{alg:eval}
\KwData{$\ell \in \N$, $\breve{\Pi} \subset \breve{\Delta}_{\ell-1}$, $\Pi \subset \Delta_{\ell-1}$,
    $\ell$-trees $\breve{\Lambda} \subset \breve{\vee}_{\ell\uparrow}$ and
    $\Lambda \subset {\vee}_{\ell\uparrow}$, $\vec{d}\in \R^{\# \Pi}$, $\vec{c}\in  \R^{\#  \Lambda}$.
}
\KwResult{$[\vec{e}, \vec{f}]$ where
    $\vec{e}=(Au)(\breve{\Phi}|_{\breve{\Pi}})$,
    $\vec{f}=(Au)(\breve{\Psi}|_{\breve{\Lambda}})$,
with $u:=\vec{d}^\top \Phi|_{\Pi}+\vec{c}^\top \Psi|_{\Lambda}$.
}
\If{$\breve{\Pi}\cup \breve{\Lambda}\neq \emptyset$}{
    $\breve{\Pi}_B:=
        \{\lambda \in \breve{\Pi}  : \big|\supp \breve{\phi}_\lambda \cap \cup_{\mu \in \Lambda_\ell}S(\mu)\big|>0\}$, $\breve{\Pi}_A:=\breve{\Pi} \setminus \breve{\Pi}_B$\\
    $\Pi_B:=\{\lambda \in \Pi : \big|\supp \phi_\lambda \cap \big(\cup_{\mu \in \breve{\Lambda}_\ell} \breve{S}(\mu) \cup_{\gamma \in \breve{\Pi}_B} \supp \breve{\phi}_\gamma \big)\big|>0\}$, $\Pi_A:=\Pi \setminus \Pi_B$\\
    $\breve{\underline{\Pi}} := \supp(\breve{\mathfrak{p}}_{\ell}, \breve{\Pi}_B) \cup \supp(\breve{\mathfrak{q}}_\ell, \breve{\Lambda}_\ell)$\\
    ${\underline{\Pi}} := \supp({\mathfrak{p}}_{\ell}, {\Pi}_B) \cup \supp({\mathfrak{q}}_\ell, {\Lambda}_\ell)$\\
    $\underline{\vec{d}}:=\mathfrak{p}_{\ell} \vec{d}|_{\Pi_B}+\mathfrak{q}_\ell \vec{c}|_{\Lambda_{\ell}}$

    $[\underline{\vec{e}},\,\underline{\vec{f}}]:=\mathtt{eval}(A)(\ell+1, \breve{\underline{\Pi}}, \breve{\Lambda}_{\ell+1\uparrow}, \underline{\Pi}, \Lambda_{\ell+1\uparrow},\underline{\vec{d}}, \vec{c}|_{\Lambda_{\ell+1\uparrow}})$\\
    $\vec{e}=
    \left[\begin{array}{@{}l@{}} \vec{e}|_{\breve{\Pi}_A}\\ \vec{e}|_{\breve{\Pi}_B}\end{array}\right]
    :=
    \left[\begin{array}{@{}l@{}} (A\Phi|_{\Pi})(\breve{\Phi}|_{\breve{\Pi}_A})\vec{d} \\
    (\breve{\mathfrak{p}}_{\ell} ^\top \underline{\vec{e}})|_{\breve{\Pi}_B}
    \end{array}\right]$\\
    $\vec{f}=
    \left[\begin{array}{@{}l@{}} \vec{f}|_{\breve{\Lambda}_\ell} \\ \vec{f}|_{\breve{\Lambda}_{\ell+1\uparrow}}\end{array}\right]
    :=
    \left[\begin{array}{@{}l@{}} (\mathfrak{\breve{q}}_\ell^\top \underline{\vec{e}})|_{\breve{\Lambda}_{\ell}} \\ \underline{\vec{f}} \end{array}\right]$\\
}
\caption{Function $\mathtt{eval}(A)$.}
\end{algorithm}

\begin{remark}  Let $\breve{\Lambda} \subset \breve{\vee}$, $\Lambda \subset \vee$ be  trees, and $\vec{c} \in \ell_2(\Lambda)$, then
\[
[\vec{e},\,\vec{f}]:=\mathtt{eval}(A)(1,\breve{\Lambda}_{0},\breve{\Lambda}_{1 \uparrow},\Lambda_{0},\Lambda_{1 \uparrow},\vec{c}|_{\Lambda_0},\vec{c}|_{\Lambda_{1 \uparrow}}),
\]
satisfies
\[
(A\Psi|_{\Lambda})(\breve{\Psi}|_{\breve{\Lambda}})\vec{c} = \begin{bmatrix}\vec{e} \\ \vec{f} \end{bmatrix}.
\]
\end{remark}

\begin{restatable}{theorem}{thmeval}\label{thm:eval}
A call of \texttt{eval} yields the output as specified, at the cost of
${\mathcal O}(\# \breve{\Pi}+\# \breve{\Lambda}+\# \Pi+\# \Lambda)$ operations.
\end{restatable}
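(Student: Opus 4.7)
The plan is to establish correctness and the cost bound simultaneously by induction on the (finite) recursion depth, reading the algorithm as a clever rearrangement of the identity
\[
u=\vec d|_{\Pi_A}^\top\Phi|_{\Pi_A}+\underline{\vec d}^\top\Phi|_{\underline\Pi}+\vec c|_{\Lambda_{\ell+1\uparrow}}^\top\Psi|_{\Lambda_{\ell+1\uparrow}},
\]
which follows from the two-scale relation together with the definition of $\underline\Pi$ as the support of $\underline{\vec d}=\mathfrak p_\ell\vec d|_{\Pi_B}+\mathfrak q_\ell\vec c|_{\Lambda_\ell}$. Termination is automatic since each call either meets $\breve\Pi\cup\breve\Lambda=\emptyset$ or strictly advances past the highest level present in $\breve\Lambda\cup\Lambda$.

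For correctness, the induction hypothesis says that the recursive call returns the evaluations of $Au'$, where $u':=u-\vec d|_{\Pi_A}^\top\Phi|_{\Pi_A}$, against $\breve\Phi|_{\breve{\underline\Pi}}$ and $\breve\Psi|_{\breve\Lambda_{\ell+1\uparrow}}$. The crucial step is to upgrade these to evaluations of $Au$: by the finite mask widths of $\breve{\mathfrak p}_\ell,\breve{\mathfrak q}_\ell$, each $\supp\breve\phi_\nu$ with $\nu\in\breve{\underline\Pi}$ lies in $\supp\breve\phi_\gamma$ for some $\gamma\in\breve\Pi_B$ or in $\supp\breve\psi_\mu$ for some $\mu\in\breve\Lambda_\ell$; the nesting $\breve S(\lambda)\subset\bigcup_{\mu\in\parent(\lambda)}\breve S(\mu)$ extends the same conclusion (inside some $\breve S$-neighbourhood) to $\supp\breve\psi_\nu$ for $\nu\in\breve\Lambda_{\ell+1\uparrow}$; by the definition of $\Pi_A$ all these supports are disjoint from $\supp\phi_\lambda$ for $\lambda\in\Pi_A$; and locality of $A$ closes the argument. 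A mirror argument, using the definition of $\breve\Pi_A$ and $S(\lambda)\subset\bigcup_{\mu\in\parent(\lambda)}S(\mu)$, shows that the wavelet part of $u$ vanishes when tested against $\breve\Phi|_{\breve\Pi_A}$, giving $\vec e|_{\breve\Pi_A}=(A\Phi|_\Pi)(\breve\Phi|_{\breve\Pi_A})\vec d$. The outputs $\vec e|_{\breve\Pi_B}$ and $\vec f|_{\breve\Lambda_\ell}$ then follow from the two-scale relations on the test side, which express $\breve\phi_\gamma$ ($\gamma\in\breve\Pi_B$) and $\breve\psi_\mu$ ($\mu\in\breve\Lambda_\ell$) as combinations of $\breve\Phi|_{\breve{\underline\Pi}}$.

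For the cost bound, I would show that the non-recursive work per call is $\mathcal O(\#\breve\Pi+\#\Pi+\#\breve\Lambda_\ell+\#\Lambda_\ell)$: the partitions $\breve\Pi=\breve\Pi_A\sqcup\breve\Pi_B$ and $\Pi=\Pi_A\sqcup\Pi_B$ rely only on $O(1)$ neighbour queries per element by the finite overlap \eqref{c2},\eqref{c6}; the direct matrix-vector product $(A\Phi|_\Pi)(\breve\Phi|_{\breve\Pi_A})\vec d$ is linear because locality of $A$ with \eqref{c5},\eqref{c6} forces $O(1)$ nonzeros per row; and the applications of the uniformly banded masks $\mathfrak p_\ell,\mathfrak q_\ell,\breve{\mathfrak p}_\ell^\top,\breve{\mathfrak q}_\ell^\top$ are linear in the input. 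To telescope, write $\breve\Pi^{(k)},\Pi^{(k)}$ for the quantities at recursion depth $k$. The geometric consequence of \eqref{c2},\eqref{c5},\eqref{c6} is
\[
\#\breve\Pi_B^{(k)}\lesssim \#\Lambda_{\ell+k},\qquad \#\Pi_B^{(k)}\lesssim \#\breve\Lambda_{\ell+k}+\#\breve\Pi_B^{(k)},
\]
so the finite mask widths yield $\#\breve\Pi^{(k+1)}+\#\Pi^{(k+1)}\lesssim \#\breve\Lambda_{\ell+k}+\#\Lambda_{\ell+k}$. Summing the per-level cost over $k\ge 0$ collapses to $\mathcal O(\#\breve\Pi+\#\Pi+\#\breve\Lambda+\#\Lambda)$.

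The principal obstacle is the correctness step: one has to chain three distinct layers of locality — the finite support of the two-scale masks, the nested $S$-neighbourhoods of descendant wavelets, and the locality of $A$ itself — to certify that \emph{every} piece of $u$ which the algorithm implicitly drops truly contributes nothing at the points where outputs are read off. The complexity statement then reduces to clean bookkeeping whose only non-obvious ingredient is the geometric bound $\#\breve\Pi_B^{(k)}\lesssim\#\Lambda_{\ell+k}$.
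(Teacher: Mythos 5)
Your proposal is correct and follows essentially the same route as the paper's proof: induction over the recursion with the trial-side two-scale identity $\underline{\vec d}=\mathfrak p_\ell\vec d|_{\Pi_B}+\mathfrak q_\ell\vec c|_{\Lambda_\ell}$ identifying the recursive input with $u-\vec d|_{\Pi_A}^\top\Phi|_{\Pi_A}$, the tree property plus nested $S$-neighbourhoods and locality of $A$ to show the dropped parts contribute nothing, test-side two-scale relations to assemble $\vec e|_{\breve\Pi_B}$ and $\vec f|_{\breve\Lambda_\ell}$, and the same counting $\#\breve{\underline\Pi}+\#\underline\Pi\lesssim\#\breve\Lambda_\ell+\#\Lambda_\ell$ for the linear cost. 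The only (harmless) deviation is that you upgrade the recursive output entrywise to evaluations of the full $u$ against $\breve\Phi|_{\breve{\underline\Pi}}$, which needs the containment of $\supp\breve\phi_\nu$, $\nu\in\breve{\underline\Pi}$, in the supports of the coarse functions generating it — a fact justified by local linear independence \eqref{c7} rather than mask width alone — whereas the paper applies $\breve{\mathfrak p}_\ell^\top,\breve{\mathfrak q}_\ell^\top$ first and only needs disjointness of $\Pi_A$-supports from $\breve\Pi_B$- and $\breve\Lambda_\ell$-supports.
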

\begin{proof}
See Appendix~\ref{app:proofs}.
\end{proof}

\subsubsection{Routines \texttt{evalupp} and \texttt{evallow}}
Let $A\colon\Span \Phi \to (\Span \breve{\Phi})'$ be local and \emph{linear}.
Set
\[
{\bf A}:=(A \Psi)(\breve{\Psi})= [(A\psi_\mu)(\breve{\psi}_\lambda)]_{(\lambda,\mu) \in \breve{\vee}\times \vee}
\]
as well as
${\bf U}:=[(A\psi_\mu)(\breve{\psi}_\lambda)]_{|\lambda| \leq |\mu|}$ and ${\bf L}:=[(A\psi_\mu)(\breve{\psi}_\lambda)]_{|\lambda| > |\mu|}$
so that  ${\bf A}={\bf L}+{\bf U}$.
As sketched in the introduction of this section, this splitting is going to be necessary for the application of system matrices in the tensor-product setting; see also~\eqref{eqn:split}. \newer{Algorithms~\ref{alg:evalupp} and~\ref{alg:evallow} below can be used to evaluate ${\bf U}$ and ${\bf L}$ in linear complexity.}

\begin{algorithm}[ht]\label{alg:evalupp}
\KwData{$\ell \in \N$, $\breve{\Pi} \subset \breve{\Delta}_{\ell-1}$, $\Pi \subset \Delta_{\ell-1}$,
    $\ell$-trees $\breve{\Lambda} \subset \breve{\vee}_{\ell\uparrow}$ and
    $\Lambda \subset {\vee}_{\ell\uparrow}$, $\vec{d}\in \R^{\# \Pi}$, $\vec{c}\in  \R^{\#  \Lambda}$.
}
\KwResult{$[\vec{e}, \vec{f}]$ where
    $\vec{e}=(Au)(\breve{\Phi}|_{\breve{\Pi}})$,
    $\vec{f}={\bf U}|_{\breve{\Lambda} \times \Lambda} \vec{c}$,
with $u:=\vec{d}^\top \Phi|_{\Pi}+\vec{c}^\top \Psi|_{\Lambda}$.
}
\If{$\breve{\Pi}\cup \breve{\Lambda}\neq \emptyset$}{
    $\breve{\Pi}_B:=\{\lambda \in \breve{\Pi} : \big|\supp \breve{\phi}_\lambda \cap \cup_{\mu \in \Lambda_\ell}S(\mu)\big|>0\}$, $\breve{\Pi}_A:=\breve{\Pi} \setminus \breve{\Pi}_B$\\

    $\breve{\underline{\Pi}} := \supp(\breve{\mathfrak{p}}_{\ell}, \breve{\Pi}_B) \cup \supp(\breve{\mathfrak{q}}_\ell, \breve{\Lambda}_\ell)$\\
    ${\underline{\Pi}} := \supp({\mathfrak{q}}_\ell, {\Lambda}_\ell)$\\
    $\underline{\vec{d}}:=\mathfrak{q}_\ell \vec{c}|_{\Lambda_{\ell}}$

    $[\underline{\vec{e}},\,\underline{\vec{f}}]:=\mathtt{evalupp}(A)(\ell+1, \breve{\underline{\Pi}}, \breve{\Lambda}_{\ell+1\uparrow}, \underline{\Pi}, \Lambda_{\ell+1\uparrow},\underline{\vec{d}}, \vec{c}|_{\Lambda_{\ell+1\uparrow}})$\\
    $\vec{e}=
    \left[\begin{array}{@{}l@{}} \vec{e}|_{\breve{\Pi}_A}\\ \vec{e}|_{\breve{\Pi}_B}\end{array}\right]
    :=
    \left[\begin{array}{@{}l@{}} (A\Phi|_{\Pi})(\breve{\Phi}|_{\breve{\Pi}_A})\vec{d}  \\
    (A\Phi|_{\Pi})(\breve{\Phi}|_{\breve{\Pi}_B})\vec{d}+(\breve{\mathfrak{p}}_{\ell} ^\top \underline{\vec{e}})|_{\breve{\Pi}_B}
    \end{array}\right]$\\
    $\vec{f}=
    \left[\begin{array}{@{}l@{}} \vec{f}|_{\breve{\Lambda}_\ell} \\ \vec{f}|_{\breve{\Lambda}_{\ell+1\uparrow}}\end{array}\right]
    :=
    \left[\begin{array}{@{}l@{}} (\mathfrak{\breve{q}}_\ell^\top \underline{\vec{e}})|_{\breve{\Lambda}_{\ell}} \\ \underline{\vec{f}} \end{array}\right]$\\
}
\caption{Function $\mathtt{evalupp}(A)$.}
\end{algorithm}

\begin{remark} Let $\breve{\Lambda} \subset \breve{\vee}$, $\Lambda \subset \vee$ be  trees, and $\vec{c} \in \ell_2(\Lambda)$, then
\[
[\vec{e},\,\vec{f}]:=\mathtt{evalupp}(A)(1,\breve{\Lambda}_{0},\breve{\Lambda}_{1 \uparrow},\Lambda_0,\Lambda_{1 \uparrow},\vec{c}|_{\Lambda_0},\vec{c}|_{\Lambda_{1 \uparrow}}),
\]
satisfies
\[
{\bf U}|_{\breve{\Lambda} \times \Lambda} \vec{c}
=
\left[\begin{array}{@{}c@{}} \vec{e} \\ \vec{f}
\end{array}\right].
\]
\end{remark}

\begin{restatable}{theorem}{thmevalupp}\label{thm:evalupp}
A call of \texttt{evalupp} yields the output as specified, at the cost of
${\mathcal O}(\# \breve{\Pi}+\# \breve{\Lambda}+\# \Pi+\# \Lambda)$ operations.
\end{restatable}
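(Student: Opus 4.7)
The plan is to mirror the inductive strategy used for Theorem~\ref{thm:eval}, proceeding by induction on the depth of the recursion (equivalently, downward induction on the maximal level occurring in $\breve\Lambda \cup \Lambda$). The base case $\breve\Pi \cup \breve\Lambda = \emptyset$ is trivial. For the inductive step, using the two-scale relation $\Psi_\ell = \mathfrak{q}_\ell^\top \Phi_\ell$, one checks that $u' := \underline{\vec{d}}^\top \Phi|_{\underline\Pi} + \vec{c}|_{\Lambda_{\ell+1\uparrow}}^\top \Psi|_{\Lambda_{\ell+1\uparrow}}$ simplifies to $\vec{c}^\top \Psi|_\Lambda$, so by the induction hypothesis $\underline{\vec{e}} = (A\vec{c}^\top \Psi|_\Lambda)(\breve\Phi|_{\breve{\underline\Pi}})$ and $\underline{\vec{f}} = \mathbf{U}|_{\breve\Lambda_{\ell+1\uparrow} \times \Lambda_{\ell+1\uparrow}} \vec{c}|_{\Lambda_{\ell+1\uparrow}}$.

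Four correctness verifications then follow. For $\vec{e}|_{\breve\Pi_A}$: iterating the ancestor property of $\ell$-trees together with the inclusion $S(\lambda) \subset \bigcup_{\mu\,\text{parent of}\,\lambda} S(\mu)$ gives $\supp \Psi|_\Lambda \subset \bigcup_{\mu \in \Lambda_\ell} S(\mu)$; by definition of $\breve\Pi_A$ and locality of $A$, $(A\vec{c}^\top\Psi|_\Lambda)(\breve\Phi|_{\breve\Pi_A}) = 0$, leaving only the $\vec{d}$-contribution. For $\vec{e}|_{\breve\Pi_B}$: linearity gives two summands; the first is exactly $(A\Phi|_\Pi)(\breve\Phi|_{\breve\Pi_B})\vec{d}$, while the second becomes $(\breve{\mathfrak{p}}_\ell^\top \underline{\vec{e}})|_{\breve\Pi_B}$ via the identity $\breve\Phi|_{\breve\Pi_B} = \breve{\mathfrak{p}}_\ell|_{\breve{\underline\Pi}, \breve\Pi_B}^\top \breve\Phi|_{\breve{\underline\Pi}}$, whose validity relies on $\breve{\underline\Pi} \supset \supp(\breve{\mathfrak{p}}_\ell, \breve\Pi_B)$. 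For $\vec{f}|_{\breve\Lambda_\ell}$: every $\mu \in \Lambda = \Lambda_{\ell\uparrow}$ satisfies $|\mu| \geq \ell = |\lambda|$, so the $\mathbf{U}$-restriction is vacuous and the formula reduces to the hierarchical-to-scaling transition $(\breve{\mathfrak{q}}_\ell^\top \underline{\vec{e}})|_{\breve\Lambda_\ell}$. Finally, $\vec{f}|_{\breve\Lambda_{\ell+1\uparrow}} = \underline{\vec{f}}$ is consistent because the omitted contribution from $\Lambda_\ell$ is strictly lower triangular against $\breve\Lambda_{\ell+1\uparrow}$ and hence zero in $\mathbf{U}$. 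The essential novelty compared to \texttt{eval} is the extra summand $(A\Phi|_\Pi)(\breve\Phi|_{\breve\Pi_B})\vec{d}$ in $\vec{e}|_{\breve\Pi_B}$: in \texttt{evalupp} the coefficients $\vec{d}$ are deliberately \emph{not} prolonged into $\underline{\vec{d}}$ (blocking lower-triangular interactions with higher-level test wavelets), so their contribution to $\breve\Pi_B$-tests must be applied directly at the current level.

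For the complexity bound, the per-level non-recursive cost $\mathcal{O}(\#\breve\Pi + \#\Pi + \#\breve\Lambda_\ell + \#\Lambda_\ell)$ follows exactly as in Theorem~\ref{thm:eval}: the uniformly bounded sparsity of $\mathfrak{p}_\ell,\mathfrak{q}_\ell,(A\Phi_\ell)(\breve\Phi_\ell)$ together with suitable neighbourhood look-ups makes every substep linear in its input size. To aggregate across the recursion, local support~\eqref{c5}--\eqref{c6} gives $\#\breve\Pi_B \lesssim \#\Lambda_\ell$, whence $\#\breve{\underline\Pi} \lesssim \#\Lambda_\ell + \#\breve\Lambda_\ell$ and $\#\underline\Pi \lesssim \#\Lambda_\ell$; telescoping across levels then yields the desired $\mathcal{O}(\#\breve\Pi + \#\Pi + \#\breve\Lambda + \#\Lambda)$ bound. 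The main subtlety is verifying that $\breve{\underline\Pi}$ is simultaneously large enough for the prolongation identity used in the $\vec{e}|_{\breve\Pi_B}$ branch to hold \emph{and} small enough to satisfy the telescoping bound; once this balance is made precise, the remainder of the argument is essentially mechanical.
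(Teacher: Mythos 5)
Your proposal is correct and follows essentially the same route as the paper's proof: the same induction on the recursion, the same identification $\underline{\vec{d}}^\top\Phi|_{\underline{\Pi}}+\vec{c}|_{\Lambda_{\ell+1\uparrow}}^\top\Psi|_{\Lambda_{\ell+1\uparrow}}=\vec{c}^\top\Psi|_{\Lambda}$, the same four block verifications (including the extra direct term $(A\Phi|_{\Pi})(\breve{\Phi}|_{\breve{\Pi}_B})\vec{d}$ caused by not prolonging $\vec{d}$), and the same cost aggregation via $\#\breve{\underline{\Pi}}+\#\underline{\Pi}\lesssim\#\breve{\Lambda}_\ell+\#\Lambda_\ell$. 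Your explicit justifications of why the $\mathbf{U}$-restriction is vacuous on the $\breve{\Lambda}_\ell$ block and strictly lower triangular on the $\breve{\Lambda}_{\ell+1\uparrow}\times\Lambda_\ell$ block are correct details the paper leaves implicit.
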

\begin{proof}
See Appendix~\ref{app:proofs}.
\end{proof}

\begin{algorithm}[ht]\label{alg:evallow}
\KwData{$\ell \in \N$, $\Pi \subset \Delta_{\ell-1}$,
    $\ell$-trees $\breve{\Lambda} \subset \breve{\vee}_{\ell\uparrow}$ and
    $\Lambda \subset {\vee}_{\ell\uparrow}$, $\vec{d}\in \R^{\# \Pi}$, $\vec{c}\in  \R^{\#  \Lambda}$.
}
\KwResult{$\vec{f}=(A\Phi|_{\Pi})(\breve{\Psi}|_{\breve{\Lambda}})\vec{d}+{\bf L}|_{\breve{\Lambda} \times \Lambda} \vec{c}$.
}
\If{$\breve{\Pi}\cup \breve{\Lambda}\neq \emptyset$}{
    $\Pi_B:=\{\lambda \in \Pi : \big|\supp \phi_\lambda \cap \cup_{\mu \in \breve{\Lambda}_\ell} \breve{S}(\mu) \big|>0\}$,\\
    $\underline{\Pi} := \supp(\mathfrak{p}_\ell, \Pi_B) \cup \supp(\mathfrak{q}_\ell, \Lambda_\ell)$\\
    $\underline{\Pi}_B := \supp(\mathfrak p_\ell, \Pi_B)$\\
    $\underline{\breve{\Pi}} := \supp(\breve{\mathfrak{q}}_\ell, \breve{\Lambda}_\ell)$\\

    $\underline{\vec{d}}:=\mathfrak{p}_{\ell} \vec{d}|_{\Pi_B}+\mathfrak{q}_\ell \vec{c}|_{\Lambda_{\ell}}$\\
    $\underline{\vec{e}}:=(A\Phi|_{\underline{\Pi}_B})(\breve{\Phi}|_{\underline{\breve{\Pi}}})\mathfrak{p}_{\ell} \vec{d}|_{\Pi_B}$\\
    $\vec{f}=
    \left[\begin{array}{@{}l@{}} \vec{f}|_{\breve{\Lambda}_\ell} \\ \vec{f}|_{\breve{\Lambda}_{\ell+1\uparrow}}\end{array}\right]
    :=
    \left[\begin{array}{@{}l@{}} (\mathfrak{\breve{q}}_\ell^\top \underline{\vec{e}})|_{\breve{\Lambda}_{\ell}} \\
    \mathtt{evallow}(A)(\ell+1, \breve{\Lambda}_{\ell+1\uparrow}, \underline{\Pi}, \Lambda_{\ell+1\uparrow},\underline{\vec{d}}, \vec{c}|_{\Lambda_{\ell+1\uparrow}}) \end{array}\right]$\\
}
\caption{Function $\mathtt{evallow}(A)$.}
\end{algorithm}

\begin{remark} \label{rem3}
Let $\breve{\Lambda} \subset \breve{\vee}$, $\Lambda \subset \vee$ be trees, and $\vec{c} \in \ell_2(\Lambda)$, then
\[
{\bf L}|_{\breve{\Lambda} \times \Lambda} \vec{c}=
\mathtt{evallow}(A)(1,\breve{\Lambda}_{1 \uparrow},\Lambda_0,\Lambda_{1 \uparrow},\vec{c}|_{\Lambda_0},\vec{c}|_{\Lambda_{1 \uparrow}}).
\]
\end{remark}

\begin{restatable}{theorem}{thmevallow}\label{thm:evallow}
A call of \texttt{evallow} yields the output as specified, at the cost of
${\mathcal O}(\# \breve{\Lambda}+\# \Pi+\# \Lambda)$ operations.
\end{restatable}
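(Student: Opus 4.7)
The plan is to argue correctness by induction on the recursion depth and then bound the total work by summing per-level contributions that telescope.

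\textbf{Correctness.} I would split the required output according to the level-$\ell$ block $\breve{\Lambda}_\ell$ and the higher-level block $\breve{\Lambda}_{\ell+1\uparrow}$. On $\breve{\Lambda}_\ell$, since $\Lambda\subset\vee_{\ell\uparrow}$ and $\mathbf{L}$ is strictly lower triangular (demanding $|\lambda|>|\mu|$), one has $\mathbf{L}|_{\breve{\Lambda}_\ell\times\Lambda}=0$, so the required value reduces to $(A\vec{d}^\top\Phi|_\Pi)(\breve{\Psi}|_{\breve{\Lambda}_\ell})$. By locality of $A$ and the very definition of $\Pi_B$, the $\Pi_A:=\Pi\setminus\Pi_B$ contributions vanish; rewriting $\Phi|_{\Pi_B}$ in the level-$\ell$ scaling basis via $\mathfrak{p}_\ell$ and $\breve{\Psi}|_{\breve{\Lambda}_\ell}$ via $\breve{\mathfrak{q}}_\ell$ matches exactly the algorithm's formula $\vec{f}|_{\breve{\Lambda}_\ell}=(\breve{\mathfrak{q}}_\ell^\top\underline{\vec{e}})|_{\breve{\Lambda}_\ell}$.

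On $\breve{\Lambda}_{\ell+1\uparrow}$, the $\ell$-tree property lets us propagate supports upward: every $\lambda\in\breve{\Lambda}_{\ell+1\uparrow}$ has an ancestor chain in $\breve{\Lambda}$ reaching level $\ell$, so the hypothesis $S(\lambda)\subset\cup_{\mu\in\parent(\lambda)}S(\mu)$ gives $\supp\breve{\psi}_\lambda\subset\cup_{\mu\in\breve{\Lambda}_\ell}\breve{S}(\mu)$; hence the $\Pi_A$ contribution vanishes here as well. Splitting $\Lambda=\Lambda_\ell\cup\Lambda_{\ell+1\uparrow}$, the level-$\ell$ column block $\mathbf{L}|_{\breve{\Lambda}_{\ell+1\uparrow}\times\Lambda_\ell}$ coincides with the full $\mathbf{A}$-block there (since $|\lambda|>\ell=|\mu|$), and rewriting $\Psi|_{\Lambda_\ell}$ via $\mathfrak{q}_\ell$ yields the $\mathfrak{q}_\ell\vec{c}|_{\Lambda_\ell}$ contribution that combines with $\mathfrak{p}_\ell\vec{d}|_{\Pi_B}$ into $\underline{\vec{d}}$. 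The remaining strictly lower-triangular block $\mathbf{L}|_{\breve{\Lambda}_{\ell+1\uparrow}\times\Lambda_{\ell+1\uparrow}}$ then exactly matches the input specification of \texttt{evallow} at level $\ell+1$ with data $(\breve{\Lambda}_{\ell+1\uparrow},\underline{\Pi},\Lambda_{\ell+1\uparrow},\underline{\vec{d}},\vec{c}|_{\Lambda_{\ell+1\uparrow}})$; induction, with the trivial base case $\breve{\Lambda}=\emptyset$, closes the argument.

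\textbf{Complexity.} At each recursive level $\ell$, building $\Pi_B$, $\underline{\Pi}_B:=\supp(\mathfrak{p}_\ell,\Pi_B)$, $\underline{\breve{\Pi}}:=\supp(\breve{\mathfrak{q}}_\ell,\breve{\Lambda}_\ell)$, forming $\underline{\vec{d}}$ and $(\breve{\mathfrak{q}}_\ell^\top\underline{\vec{e}})|_{\breve{\Lambda}_\ell}$, can be carried out in $\mathcal{O}(\#\Pi^{(\ell)}+\#\breve{\Lambda}_\ell+\#\Lambda_\ell)$ operations using the uniformly bounded mask lengths of $\mathfrak{p}_\ell,\mathfrak{q}_\ell,\breve{\mathfrak{q}}_\ell$ together with the local-finiteness conditions \eqref{c2} and \eqref{c6}. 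The single-scale matrix $(A\Phi|_{\underline{\Pi}_B})(\breve{\Phi}|_{\underline{\breve{\Pi}}})$ is uniformly sparse by locality of $A$ and \eqref{c5}--\eqref{c6}, so $\underline{\vec{e}}$ is obtained in the same order.

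\textbf{Main obstacle.} The delicate point is preventing a factor equal to the recursion depth from entering the total. Here one uses that $\Pi^{(\ell)}_B$ consists only of level-$(\ell{-}1)$ scaling functions whose supports meet $\cup_{\mu\in\breve{\Lambda}_\ell}\breve{S}(\mu)$; since each $\breve{S}(\mu)$ has diameter $\lesssim 2^{-\ell}$, \eqref{c6} yields $\#\Pi^{(\ell)}_B\lesssim\#\breve{\Lambda}_\ell$, and therefore $\#\Pi^{(\ell+1)}=\#\underline{\Pi}\lesssim\#\Pi^{(\ell)}_B+\#\Lambda_\ell\lesssim\#\breve{\Lambda}_\ell+\#\Lambda_\ell$. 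With $\#\Pi^{(1)}=\#\Pi$ and the telescoping identities $\sum_\ell\#\breve{\Lambda}_\ell=\#\breve{\Lambda}$ and $\sum_\ell\#\Lambda_\ell=\#\Lambda$, summing the per-level estimates gives the claimed bound $\mathcal{O}(\#\breve{\Lambda}+\#\Pi+\#\Lambda)$.
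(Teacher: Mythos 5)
Your proposal is correct and follows essentially the same route as the paper's proof: the same level-wise induction, with the block $\vec{f}|_{\breve{\Lambda}_\ell}$ handled via locality, the definition of $\Pi_B$ and the refinement relations $\mathfrak{p}_\ell,\breve{\mathfrak{q}}_\ell$, the block $\vec{f}|_{\breve{\Lambda}_{\ell+1\uparrow}}$ reduced to the recursive call by absorbing $\mathfrak{q}_\ell\vec{c}|_{\Lambda_\ell}$ into $\underline{\vec{d}}$, and the complexity bound obtained from $\#\underline{\Pi}\lesssim\#\breve{\Lambda}_\ell+\#\Lambda_\ell$ together with summing over levels. Your explicit justifications of ${\bf L}|_{\breve{\Lambda}_\ell\times\Lambda}=0$ and of $\#\Pi_B\lesssim\#\breve{\Lambda}_\ell$ are points the paper leaves implicit, but they do not change the argument.
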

\begin{proof}
See Appendix~\ref{app:proofs}.
\end{proof}

\subsection{Application of tensor-product operators w.r.t.~double-trees}
\label{Sappl-of-tensors}
For $i \in \{0,1\}$, let
$A_i\colon\Span \Phi_i \to \Span \breve{\Phi}_i'$ be local and linear
and let
\[
{\bf A}_i=(A \Psi_i)(\breve{\Psi}_i)=[(A \psi^i_\mu)(\breve{\psi}^i_\lambda)]_{\lambda \in \breve{\vee}^i, \mu \in \vee^i} = {\bf L}_i+{\bf U}_i.
\]
where ${\bf U}_i:=[({\bf A}_i)_{\lambda,\mu}]_{|\lambda| \leq |\mu|}$ and ${\bf L}_i:=[({\bf A}_i)_{\lambda,\mu}]_{|\lambda| > |\mu|}$.
For $i \in \{0,1\}$, let $\neg i := 1-i$.

\begin{definition}[Double-tree]\label{def:dbltree}
Define the coordinate projector $P_i (b_0,b_1):=b_i$.
We call $\bm{\Lambda} \subset \{\breve{\vee}^{0} \times \breve{\vee}^1,\vee^0 \times \breve{\vee}^1, \breve{\vee}^0 \times \vee^1, \vee^0 \times \vee^1\}$,
a {\em double-tree} when for $i \in \{0,1\}$ and any $\mu\in P_{\neg i}\bm{\Lambda}$, the \emph{fiber}
\[
\bm{\Lambda}_{i,\mu}:=P_i(P_{\lnot i}|_{\bm{\Lambda}})^{-1} \{\mu\}
\]
is a tree (in $\breve{\vee}^i $ or $\vee^i$), i.e., $\bm{\Lambda}$ is a double-tree when `frozen' in each of its coordinates, at any value of that coordinate, it is a tree in the remaining coordinate.
\end{definition}
From $\bm{\Lambda}=\cup_{\mu\in P_{\lnot i}\bm{\Lambda}} (P_{\lnot i}|_{\bm{\Lambda}})^{-1} \{\mu\}$, we have
$P_i \bm{\Lambda}=\cup_{\mu\in P_{\lnot i}\bm{\Lambda}} \bm{\Lambda}_{i,\mu}$, which, being a union of trees, is a tree itself. See also Figure~\ref{fig:doubletree}.
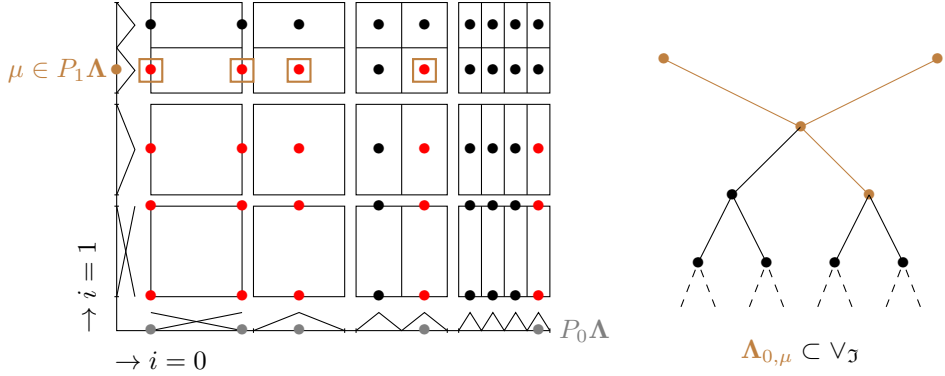
\begin{figure}[ht]
\centering
\begin{tikzpicture}[scale=0.3]
    \node[below right] at (-1.5,-1.5) {$\rightarrow i=0$};
    \node[above right, rotate=90] at (-1.5,-1.5) {$\rightarrow i=1$};
    \draw (-1,13.5) -- (-1,-1) -- (18.0,-1);
    \begin{scope}[shift={(0,-1)}]
    \foreach \xpos in {0.5, 4.5, 5.0, 9.0, 9.5, 13.5, 14.0, 18.0} {
        \draw (\xpos, -.1) -- (\xpos, .1);
    }
    \draw (0.5,0) -- (4.5, 0.8);
    \draw (0.5,0.8) -- (4.5, 0);
    \draw (5,0) -- (7,0.8) -- (9,0);
    \draw (9.5,0) -- (10.5,0.8) -- (11.5,0) -- (12.5,0.8) -- (13.5,0);
    \draw (14.0,0) -- (14.5,0.8) -- (15.0,0) -- (15.5,0.8) -- (16.0,0) -- (16.5,0.8) -- (17.0,0) -- (17.5,0.8) -- (18.0,0);
    \foreach \xpos in {0.5,4.5,7.0,12.5,17.5} {
        \node[gray] at (\xpos,0){\textbullet};
    }
    \node[gray, right] at (18.0,0){$P_0 {\bf \Lambda}$};
    \end{scope}
    \begin{scope}[shift={(-1,0)}]
    \foreach \xpos in {0.5, 4.5, 5.0, 9.0, 9.5, 13.5} {
        \draw (-.1,\xpos) -- (.1,\xpos);
    }
    \draw (0,0.5) -- (0.8,4.5);
    \draw (0.8,0.5) -- (0,4.5);
    \draw (0,5.0) -- (0.8,7.0) -- (0,9.0);
    \draw (0,9.5) -- (0.8,10.5) -- (0,11.5) -- (0.8,12.5) -- (0,13.5);
    \end{scope}
    \foreach \row in {0.5, 5.0, 9.5, 14.0} {
        \foreach \col in {0.5, 5.0, 9.5} {
            \draw (\row, \col) rectangle (\row + 4,\col + 4);
        }
    }
    \foreach \xpos in {11.5, 15.0, 16.0, 17.0} {
        \foreach \ypos in {0.5, 5.0, 9.5} {
            \draw (\xpos, \ypos) -- (\xpos,\ypos+4);
        }
    }
    \foreach \xpos in {11.5} {
        \foreach \ypos in {0.5, 5.0, 9.5, 14.0} {
            \draw (\ypos, \xpos) -- (\ypos+4,\xpos);
        }
    }
    \foreach \xy in {(0.5,0.5), (0.5,4.5), (0.5,7), (0.5,10.5),
    (4.5,0.5), (4.5,4.5), (4.5,7), (4.5,10.5),
    (7,0.5), (7,4.5), (7,7), (7,10.5),
    (12.5,0.5), (12.5,4.5), (12.5,7), (12.5,10.5),
    (17.5,0.5), (17.5,4.5), (17.5,7)} {
        \node[red] at \xy{\textbullet};
    }
    \foreach \xy in {(0.5,12.5),
    (4.5,12.5),
    (7,12.5),
    (10.5, 0.5),(10.5,4.5), (10.5,7), (10.5,10.5), (10.5,12.5),
    (12.5,12.5),
    (14.5,0.5), (14.5,4.5), (14.5,7), (14.5,10.5), (14.5,12.5),
    (15.5,0.5), (15.5,4.5), (15.5,7), (15.5,10.5), (15.5,12.5),
    (16.5,0.5), (16.5,4.5), (16.5,7), (16.5,10.5), (16.5,12.5),
    (17.5,10.5), (17.5,12.5)} {
        \node at \xy{\textbullet};
    }
    \node[brown, left] at (-1.0, 10.5){$\mu \in P_1{\bf \Lambda}$};
    \node[brown] at (-1,10.5){\textbullet};
    \foreach \x in {0.5, 4.5, 7.0, 12.5} {
        \draw[brown,thick] (\x-0.5,10.0) rectangle (\x+.5,11.0);
    }
    
    \begin{scope}[shift={(23,8.0)}, scale=0.9/0.3]
    \node[brown] at (0,1){\textbullet};
    \node[brown] at (4,1){\textbullet};
    
    \draw[brown] (0,1) -- (2,0) -- (4,1);
    \node[brown] at (2,0){\textbullet};
    \draw (2,0) -- (1,-1);
    \node at (1,-1){\textbullet};
    \draw[brown] (2,0) -- (3,-1);
    \node[brown] at (3,-1){\textbullet};
    
    \draw (1,-1) -- (0.5,-2);
    \node at (0.5,-2){\textbullet};
    \draw (1,-1) -- (1.5,-2);
    \node at (1.5,-2){\textbullet};
    \draw (3,-1) -- (2.5,-2);
    \node at (2.5,-2){\textbullet};
    \draw (3,-1) -- (3.5,-2);
    \node at (3.5,-2){\textbullet};
    
    \draw[dashed] (3.75,-2.65) -- (3.5,-2);
    \draw[dashed] (2.75,-2.65) -- (2.5,-2);
    \draw[dashed] (1.75,-2.65) -- (1.5,-2);
    \draw[dashed] (0.75,-2.65) -- (0.5,-2);
    \draw[dashed] (3.25,-2.65) -- (3.5,-2);
    \draw[dashed] (2.25,-2.65) -- (2.5,-2);
    \draw[dashed] (1.25,-2.65) -- (1.5,-2);
    \draw[dashed] (0.25,-2.65) -- (0.5,-2);
    \node[below] at (2,-3){${\color{brown} {\bf \Lambda}_{0, \mu}} \subset \vee_{\mathfrak I}$};
    \end{scope}
\end{tikzpicture}
\vspace{-1em}
\caption{With $\vee_{\mathfrak I}$ from Figure~\ref{fig:time-hbf}: $\vee_{\mathfrak I} \times \vee_{\mathfrak I}$ in black; a double-tree ${\bf \Lambda} \subset \vee_{\mathfrak I} \times \vee_{\mathfrak I}$ in red; the projection $P_0{\bf \Lambda}$ in gray, and a fiber ${\bf \Lambda}_{0, \mu}$ for $\mu \in P_1{\bf \Lambda}$ in brown.}
\label{fig:doubletree}
\end{figure}

For a subset $\lhd$ of a (double) index set $\Diamond$, let $I_\lhd^\Diamond$ denote the extension operator with zeros of a vector supported on $\lhd$ to one on $\Diamond$, and let $R_\lhd^\Diamond$ denotes its (formal) adjoint, being the restriction operator of a vector supported on $\Diamond$ to one on $\lhd$. Since the set $\Diamond$ will always be clear from the context, we will denote these operators simply by $I_\lhd$ and $R_\lhd$.%

As sketched in the introduction of this section, the pieces are now in place to apply $R_{\breve {\bf \Lambda}} ({\bf A}_0 \otimes {\bf A}_1) I_{\bf \Lambda}$ in linear complexity.

\begin{restatable}{theorem}{thmevaldbl} \label{thm1} Let $\breve{\bm{\Lambda}} \subset \breve{\vee}^0 \times \breve{\vee}^1$, $\bm{\Lambda} \subset \vee^0 \times \vee^1$ be finite double-trees. Then
\begin{align*}
\bm{\Sigma}&:=\bigcup_{\lambda \in P_0 \bm{\Lambda}} \Big(\{\lambda\} \times
\bigcup_{\big\{\mu \in P_0 \breve{\bm{\Lambda}} : |\mu|=|\lambda|+1,\,|\breve{S}^0(\mu) \cap S^0(\lambda)|>0\big\}} \breve{\bm{\Lambda}}_{1,\mu}
\Big),\\
\bm{\Theta}&:=\bigcup_{\lambda \in P_{1} \bm{\Lambda}} \Big(
\{\mu \in  P_{0} \breve{\bm{\Lambda}} : \exists \gamma \in \bm{\Lambda}_{0,\lambda} \text{ s.t. } |\gamma|= |\mu|,\,|\breve{S}^0(\mu) \cap S^0(\gamma)|>0\}
\times \{\lambda\}\Big),
\end{align*}
are double-trees with $\# \bm{\Sigma} \lesssim \# \breve{\bm{\Lambda}}$ and $\# \bm{\Theta} \lesssim \# \bm{\Lambda}$, and
\begin{equation} \label{splitting}
\begin{split}
R_{\breve{\bm{\Lambda}}} ({\bf A}_0 \otimes {\bf A}_1) I_{\bm{\Lambda}}=
 & R_{\breve{\bm{\Lambda}}} ({\bf L}_0 \otimes \mathrm{Id}) I_{\bm{\Sigma}}  R_{\bm{\Sigma}} (\mathrm{Id} \otimes {\bf A}_1) I_{\bm{\Lambda}}+ \\
&R_{\breve{\bm{\Lambda}}} (\mathrm{Id} \otimes {\bf A}_1) I_{\bm{\Theta}}  R_{\bm{\Theta}} ({\bf U}_0 \otimes \mathrm{Id}) I_{\bm{\Lambda}}.
\end{split}
\end{equation}
\end{restatable}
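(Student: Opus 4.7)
My plan is to establish the three claims (double-tree property of $\bm{\Sigma}$ and $\bm{\Theta}$, cardinality bounds, and the splitting identity~\eqref{splitting}) by systematically exploiting the double-tree hypothesis on $\bm{\Lambda}$ and $\breve{\bm{\Lambda}}$, the nesting $S(\lambda) \subseteq \bigcup_{\mu \in \parent(\lambda)} S(\mu)$, and the locality assumptions~\eqref{c1}--\eqref{c2}. For the double-tree property of $\bm{\Sigma}$, I would check each coordinate-frozen fiber: freezing the first coordinate at $\lambda \in P_0\bm{\Sigma}$ yields a finite union of tree fibers $\breve{\bm{\Lambda}}_{1,\mu}$ which is itself a tree because downward-closedness is preserved under unions; and freezing the second coordinate at $\gamma$, I would take a point $\lambda$ in that fiber with its witness $\mu \in P_0 \breve{\bm{\Lambda}}$ at level $|\lambda|+1$, and for each parent $\lambda_p$ of $\lambda$ produce a parent $\mu_p$ of $\mu$ witnessing $\lambda_p$: $\mu_p \in P_0\breve{\bm{\Lambda}}$ because $P_0\breve{\bm{\Lambda}}$ is a tree, $\gamma \in \breve{\bm{\Lambda}}_{1,\mu_p}$ by the double-tree hypothesis applied to the tree fiber $\breve{\bm{\Lambda}}_{0,\gamma}$, and the support-overlap condition surviving via $\breve S^0(\mu_p) \supseteq \breve S^0(\mu)$ and $S^0(\lambda_p) \supseteq S^0(\lambda)$. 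The argument for $\bm{\Theta}$ is symmetric. The cardinality bound $\#\bm{\Sigma} \lesssim \#\breve{\bm{\Lambda}}$ then follows because each $(\mu,\gamma) \in \breve{\bm{\Lambda}}$ generates only $\mathcal O(1)$ pairs in $\bm{\Sigma}$ by the $S$-analog of~\eqref{c2}, and symmetrically for $\bm{\Theta}$.

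For the splitting identity, I would first rewrite $\mathbf{A}_0 \otimes \mathbf{A}_1 = (\mathbf{L}_0 \otimes \mathrm{Id})(\mathrm{Id} \otimes \mathbf{A}_1) + (\mathrm{Id} \otimes \mathbf{A}_1)(\mathbf{U}_0 \otimes \mathrm{Id})$, which is valid because tensor factors acting on distinct coordinates commute. It then remains to show that inserting $I_{\bm{\Sigma}} R_{\bm{\Sigma}}$ between the factors of the first summand and $I_{\bm{\Theta}} R_{\bm{\Theta}}$ between the factors of the second summand changes nothing after sandwiching with $R_{\breve{\bm{\Lambda}}}$ and $I_{\bm{\Lambda}}$. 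Equivalently, I must verify that every entry $(\lambda,\gamma) \notin \bm{\Sigma}$ is either not populated by $(\mathrm{Id} \otimes \mathbf{A}_1) I_{\bm{\Lambda}}$ or sent to zero by $R_{\breve{\bm{\Lambda}}}(\mathbf{L}_0 \otimes \mathrm{Id})$, and symmetrically for $\bm{\Theta}$.

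The main obstacle is the \emph{lifting step} behind this last implication. Assume $(\lambda,\gamma)$ is populated at the intermediate stage (forcing $\lambda \in P_0\bm{\Lambda}$) and contributes through some $(\mu,\gamma) \in \breve{\bm{\Lambda}}$ with $(\mathbf{L}_0)_{\mu,\lambda} \ne 0$, so that $|\mu| > |\lambda|$ and $|\breve S^0(\mu) \cap S^0(\lambda)| > 0$. To conclude $(\lambda,\gamma) \in \bm{\Sigma}$ I must replace $\mu$ by a carefully chosen ancestor $\mu_*$ at level $|\lambda|+1$: at each step from $\mu'$ to a parent, the axiom $\breve S^0(\mu') \subseteq \bigcup_{p \in \parent(\mu')} \breve S^0(p)$ together with a pigeonhole on positive measure lets me pick a parent preserving $|\breve S^0(p) \cap S^0(\lambda)| > 0$, while the tree property of the fiber $\breve{\bm{\Lambda}}_{0,\gamma}$ guarantees that $\mu_*$ stays in $\breve{\bm{\Lambda}}_{0,\gamma}$, hence $\gamma \in \breve{\bm{\Lambda}}_{1,\mu_*}$ and $\mu_* \in P_0\breve{\bm{\Lambda}}$. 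The four conditions together place $(\lambda,\gamma)$ in $\bm{\Sigma}$ by its very definition. The argument for $\bm{\Theta}$ is analogous, interchanging the roles of $\mathbf{L}_0$ and $\mathbf{U}_0$ and replacing $\gamma$ by an ancestor at level $|\mu|$ inside the tree fiber $\bm{\Lambda}_{0,\lambda}$.
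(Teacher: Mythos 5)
Your proposal is correct and follows essentially the same route as the paper's proof: the same splitting $\mathbf{A}_0\otimes\mathbf{A}_1=(\mathbf{L}_0\otimes\mathrm{Id})(\mathrm{Id}\otimes\mathbf{A}_1)+(\mathrm{Id}\otimes\mathbf{A}_1)(\mathbf{U}_0\otimes\mathrm{Id})$, the same identification of $\bm{\Sigma}$ and $\bm{\Theta}$ as the sets of possibly relevant intermediate indices via lifting the level condition to $|\mu|=|\lambda|+1$ (resp.\ $|\gamma|=|\mu|$) along tree fibers, and the same locality-based counting for the cardinality bounds. The only cosmetic difference is that you justify the lifting with a pigeonhole over parents using $S(\lambda)\subset\bigcup_{\mu\in\parent(\lambda)}S(\mu)$, where the paper simply invokes nestedness of the neighbourhoods $S(\cdot)$ under taking ancestors.
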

\begin{proof}
See Appendix~\ref{app:proofs}.
\end{proof}

The application of $R_{\breve{\bm{\Lambda}}} ({\bf L}_0 \otimes \mathrm{Id}) I_{\bm{\Sigma}}$ boils down to the application of
$R_{\breve{\bm{\Lambda}}_{0,\mu}} {\bf L}_0 I_{\bm{\Sigma}_{0,\mu}}$ for every $\mu \in P_1 \bm{\Sigma}\cap P_1\breve{\bm{\Lambda}}$.
Such an application can be performed in ${\mathcal O}(\#\breve{\bm{\Lambda}}_{0,\mu}+\#\bm{\Sigma}_{0,\mu})$ operations by means of a call of $\mathtt{evallow}({A_0})$; see also Algorithm~\ref{alg:tensor}.
Since
$\sum_{\mu \in \breve{\vee}_1}\#\breve{\bm{\Lambda}}_{0,\mu} + \#\bm{\Sigma}_{0,\mu} = \# \breve{\bm{\Lambda}}+ \# \bm{\Sigma}$,
we conclude that the application of $R_{\breve{\bm{\Lambda}}} ({\bf L}_0 \otimes \mathrm{Id}) I_{\bm{\Sigma}}$ can be performed in ${\mathcal O}(\#\breve{\bm{\Lambda}}+\#\bm{\Sigma})$ operations.

Similarly, applications of
$R_{\bm{\Sigma}} (\mathrm{Id} \otimes {\bf A}_1) I_{\bm{\Lambda}}$,
$R_{\breve{\bm{\Lambda}}} (\mathrm{Id} \otimes {\bf A}_1) I_{\bm{\Theta}}$, and
$R_{\bm{\Theta}} ({\bf U}_0 \otimes \mathrm{Id}) I_{\bm{\Lambda}}$
using calls of $\mathtt{eval}({A_1})$, $\mathtt{eval}({A_1})$, and $\mathtt{evalupp}({A_0})$ respectively,
can be done in ${\mathcal O}(\# \bm{\Sigma}+ \# \bm{\Lambda})$, ${\mathcal O}(\#\breve{\bm{\Lambda}}+ \#\bm{\Theta})$, and ${\mathcal O}(\#\bm{\Theta}+ \#\bm{\Lambda})$ operations. From $\# \bm{\Sigma} \lesssim \# \breve{\bm{\Lambda}}$ and $\# \bm{\Theta} \lesssim \# \bm{\Lambda}$ we conclude the following.

\begin{corol}\label{cor:spacetime-linear} Let $\breve{\bm{\Lambda}} \subset \breve{\vee}^0 \times \breve{\vee}^1$, $\bm{\Lambda} \subset \vee^0 \times \vee^1$ be finite double-trees, then $R_{\breve{\bm{\Lambda}}} ({\bf A}_0 \otimes {\bf A}_1) I_{\bm{\Lambda}}$ can be applied in ${\mathcal O}(\# \breve{\bm{\Lambda}}+\#\bm{\Lambda})$ operations.
\end{corol}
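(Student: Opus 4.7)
The plan is to reduce the claim to the splitting~\eqref{splitting} from Theorem~\ref{thm1}, and then to apply each of the four resulting factors fiber-by-fiber using the single-direction routines $\mathtt{eval}$, $\mathtt{evalupp}$, and $\mathtt{evallow}$ whose linear-time guarantees are given by Theorems~\ref{thm:eval},~\ref{thm:evalupp}, and~\ref{thm:evallow}.

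First I would observe that each of the four factors on the right-hand side of~\eqref{splitting} has the form $R_{\bm X}({\bf M}\otimes \mathrm{Id})I_{\bm Y}$ or $R_{\bm X}(\mathrm{Id}\otimes {\bf M})I_{\bm Y}$ with ${\bf M}\in\{{\bf L}_0,{\bf U}_0,{\bf A}_1\}$ and $\bm X,\bm Y$ among $\breve{\bm{\Lambda}},\bm{\Sigma},\bm{\Theta},\bm{\Lambda}$. Such an operator is block-diagonal with respect to the ``frozen'' coordinate: for $\mu$ ranging over the projection onto the untouched coordinate, each block equals $R_{\bm X_{i,\mu}}{\bf M}\,I_{\bm Y_{i,\mu}}$, where $i$ is the active coordinate. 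Since $\bm X$ and $\bm Y$ are double-trees---$\breve{\bm{\Lambda}}$ and $\bm{\Lambda}$ by hypothesis, $\bm{\Sigma}$ and $\bm{\Theta}$ by Theorem~\ref{thm1}---Definition~\ref{def:dbltree} guarantees that the fibers $\bm X_{i,\mu}$ and $\bm Y_{i,\mu}$ are genuine (single-coordinate) trees in $\vee^i$ or $\breve{\vee}^i$, which is precisely the input format demanded by the $\mathtt{eval}$-family routines.

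Next I would sum the per-fiber costs. Each call to $\mathtt{evallow}(A_0)$, $\mathtt{evalupp}(A_0)$, or $\mathtt{eval}(A_1)$ runs in time $\mathcal{O}(\#\bm X_{i,\mu}+\#\bm Y_{i,\mu})$, which telescopes under $\sum_\mu \#\bm X_{i,\mu}=\#\bm X$ to a total cost of $\mathcal{O}(\#\bm X+\#\bm Y)$ for the factor in question. Applied to the four factors of~\eqref{splitting} this yields the bounds $\mathcal{O}(\#\breve{\bm{\Lambda}}+\#\bm{\Sigma})$, $\mathcal{O}(\#\bm{\Sigma}+\#\bm{\Lambda})$, $\mathcal{O}(\#\breve{\bm{\Lambda}}+\#\bm{\Theta})$, and $\mathcal{O}(\#\bm{\Theta}+\#\bm{\Lambda})$. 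Adding them and invoking the size estimates $\#\bm{\Sigma}\lesssim \#\breve{\bm{\Lambda}}$ and $\#\bm{\Theta}\lesssim \#\bm{\Lambda}$ from Theorem~\ref{thm1} collapses the sum to the claimed $\mathcal{O}(\#\breve{\bm{\Lambda}}+\#\bm{\Lambda})$.

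The principal difficulty is not the bookkeeping above but is absorbed into Theorem~\ref{thm1}: one must exhibit intermediate double-trees $\bm{\Sigma},\bm{\Theta}$ satisfying the exact identity~\eqref{splitting} and bound their cardinalities linearly in $\#\breve{\bm{\Lambda}}$ and $\#\bm{\Lambda}$, respectively. Granting that result, the corollary follows purely by fiber-wise dispatch of the $\mathtt{eval}$-family routines and the telescoping argument described above.
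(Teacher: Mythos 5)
Your proposal is correct and follows essentially the same route as the paper: decompose via the splitting of Theorem~\ref{thm1}, apply each of the four factors block-diagonally per fiber (each fiber being a tree by the double-tree property) using \texttt{eval}, \texttt{evalupp}, \texttt{evallow}, sum the per-fiber costs via $\sum_\mu \#\bm X_{i,\mu}=\#\bm X$, and finish with $\#\bm{\Sigma}\lesssim\#\breve{\bm{\Lambda}}$, $\#\bm{\Theta}\lesssim\#\bm{\Lambda}$. No gaps.
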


\section{The heat equation and practical realization}\label{sec:heat}
In this section, we consider the numerical approximation of the \emph{heat equation
\be \label{eqn:heat}
\left\{
\begin{array}{rl}
\frac{\D u}{\D t}(t) -(\Delta_{\vec x} u)(t)& = g(t) \quad(t \in I),\\
u(0) & = u_0.
\end{array}
\right.
\ee}
For some bounded domain $\Omega \subset \R^2$, we take $H := L_2(\Omega)$ and $V := H_0^1(\Omega)$, so that $X = L_2(I; H_0^1(\Omega)) \cap H^1(I; H^{-1}(\Omega))$ and $Y = L_2(I; H_0^1(\Omega))$.
We define
\[
a(t; \eta, \zeta) := \int_\Omega \grad \eta \cdot \grad \zeta \dif {\bf x},
\]
and aim to solve the parabolic initial value problem~\eqref{11} numerically.
The bilinear forms present in our variational formulation~\eqref{m1} satisfy
\[
A = M_t \otimes A_{\vec x}, \quad B = D_t \otimes M_{\vec x} + A, \quad \text{and} \quad \gamma_0' \gamma_0 = G_t \otimes M_{\vec x}
\]
where
\be \label{eqn:bilforms}
\begin{aligned}
(M_t v)(w) &:= \int_I v w \dif t,\quad
(D_t v)(w) := \int_I v' w \dif t, \quad
(G_t v)(w) := v(0) w(0), \\
(A_{\bf x} \eta)(\zeta) &:= \int_\Omega \nabla \eta \cdot \nabla \zeta \dif \vec x, \quad
(M_{\bf x} \eta)(\zeta) := \int_\Omega \eta \zeta \dif \vec x.
\end{aligned}
\ee

In this section, we
first construct suitable tensor-product bases for $X$ and $Y$ which functions are wavelets in time and hierarchical finite element functions in space. We then build our discrete `trial' and `test' spaces $(X^\delta, Y^\delta)_{\delta \in \Delta}$ as the span of subsets of these tensor-product bases.
\newer{We finish with concrete uniformly optimal preconditioners $K_X^\delta$ and $K_Y^\udelta$,
the basis necessary for error estimation in the adaptive loop, and evaluation of the right-hand side of~\eqref{eqn:discr-schur} using interpolants.}

\subsection{Wavelets in time}\label{sec:wit}
We construct piecewise linear wavelet bases $\Sigma$ for $H^1(I)$ and $\Xi$ for $L_2(I)$.

\subsubsection{Basis on the trial side}
For $\Sigma$, we choose the three-point wavelet basis from~\cite{Stevenson1998}; for completeness, we include its construction.
For $\ell \geq 0$, 
define the scaling functions as the nodal continuous piecewise linears w.r.t.~a uniform partition into $2^\ell$ subintervals, that is
$\Phi^\Sigma_\ell := \{\phi_{(\ell,n)} : 0 \leq n \leq 2^\ell\}$ with
$\phi_{(\ell,n)}(k2^{-\ell}) = \delta_{kn}$ for $0 \leq k \leq 2^\ell$.
Define $\Sigma_0 := \Phi^\Sigma_0$, and for $\ell \geq 1$, define
$\Sigma_\ell := \{\sigma_\lambda: \lambda := (\ell, n)~~\text{with}~~ 0 \leq n < 2^{\ell-1}\}$ with $\sigma_\lambda = \sigma_{(\ell,n)}$
as in the right of Figure~\ref{fig:3pt-tree}.
Note that each $\sigma_\lambda$
is a linear combination of three nodal functions from $\Phi^\Sigma_\ell$, hence the name \emph{three-point wavelet}.

By imposing the parent-child structure
\be \label{eqn:parent-child}
\tilde \lambda \triangleleft_\Sigma \lambda \iff  |\tilde \lambda| + 1 = |\lambda| ~~ \text{and} ~~ |\supp \sigma_\lambda \cap \supp \sigma_{\tilde \lambda}| > 0,
\ee
on any two indices $\tilde \lambda, \lambda$,
we get
the tree shown left in Figure~\ref{fig:3pt-tree}.

Define $\Sigma := \cup_{\ell \geq 0} \Sigma_\ell$, $\vee_\Sigma := \{\lambda : \sigma_\lambda \in \Sigma\}$, and $S(\sigma_\lambda) := \supp \sigma_\lambda$. We see that $\Sigma$
satisfies~\eqref{c1}--\eqref{c2} and that the $\Phi^\Sigma_\ell$
satisfy~\eqref{c4}--\eqref{c7}. Moreover, one can show that $\Sigma$ is a Riesz basis for
$L_2(I)$ (cf.~\cite[Thm.~4.2]{Stevenson1998}), and that $\{2^{-|\lambda|} \sigma_\lambda\}$ is
a Riesz basis for $H^1(I)$ (cf.~\cite[Thm.~4.3]{Stevenson1998}).

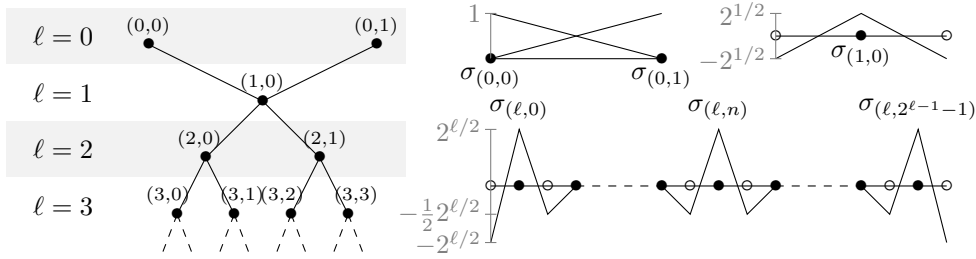
\begin{figure}[ht]
\centering
\begin{tikzpicture}[scale=0.75]
    \begin{scope}[shift={(0,0)}]
    \fill[black!5] (-2.5, 0.65) rectangle (4.5, 1.65); \node at (-1.5, 1.15){$\ell=0$};
    \fill[black!0] (-2.5, -0.35) rectangle (4.5, 0.65); \node at (-1.5, 0.15){$\ell=1$};
    \fill[black!5] (-2.5, -1.35) rectangle (4.5, -0.35); \node at (-1.5, -0.85){$\ell=2$};
    \fill[black!0] (-2.5, -2.35) rectangle (4.5, -1.35); \node at (-1.5, -1.85){$\ell=3$};
    
    \node at (0,1){\textbullet}; \node[above] at (0,1){$_{(0,0)}$};
    \node at (4,1){\textbullet}; \node[above] at (4,1){$_{(0,1)}$};
    
    \draw (0,1) -- (2,0) -- (4,1);
    \node at (2,0){\textbullet}; \node[above] at (2,0){$_{(1,0)}$}; 
    \draw (2,0) -- (1,-1);
    \node at (1,-1){\textbullet}; \node[above] at (0.9,-1){$_{(2,0)}$};
    \draw (2,0) -- (3,-1);
    \node at (3,-1){\textbullet}; \node[above] at (3.1,-1){$_{(2,1)}$};
    
    \draw (1,-1) -- (0.5,-2);
    \node at (0.5,-2){\textbullet}; \node[above] at (0.35,-2){$_{(3,0)}$};
    \draw (1,-1) -- (1.5,-2);
    \node at (1.5,-2){\textbullet}; \node[above] at (1.65,-2){$_{(3,1)}$};
    \draw (3,-1) -- (2.5,-2);
    \node at (2.5,-2){\textbullet}; \node[above] at (2.35,-2){$_{(3,2)}$};
    \draw (3,-1) -- (3.5,-2);
    \node at (3.5,-2){\textbullet}; \node[above] at (3.65,-2){$_{(3,3)}$};
    
    \draw[dashed] (3.75,-2.66) -- (3.5,-2);
    \draw[dashed] (2.75,-2.66) -- (2.5,-2);
    \draw[dashed] (1.75,-2.66) -- (1.5,-2);
    \draw[dashed] (0.75,-2.66) -- (0.5,-2);
    \draw[dashed] (3.25,-2.66) -- (3.5,-2);
    \draw[dashed] (2.25,-2.66) -- (2.5,-2);
    \draw[dashed] (1.25,-2.66) -- (1.5,-2);
    \draw[dashed] (0.25,-2.66) -- (0.5,-2);
    \end{scope}
    
    \begin{scope}[shift={(6.0,0.75)}]
    \begin{scope}[shift={(0,0)}]
        \draw (0,0) -- (3,0);
        \node at (0,0){\textbullet}; \node at (3,0){\textbullet};
        \draw (0,.8) -- (3,0); \node[below] at (0,0){$\sigma_{(0,0)}$};
        \node[gray, left] at (0,.8){$1$};
        \draw[gray] (0,0) -- (0,.8);
        \draw[gray] (-0.1,.8) -- (0.1,.8);
        \draw (0,0) -- (3,.8); \node[below] at (3,0){$\sigma_{(0,1)}$};
    \end{scope}
    \begin{scope}[shift={(5,0.4)}]
        \draw (0,0) -- (3,0);
        \node at (0,0){\textopenbullet}; \node at (3,0){\textopenbullet}; \node at (1.5,0){\textbullet};
        \draw (0,-0.4) -- (1.5,0.4) -- (3,-0.4); \node[below] at (1.5,0){ $\sigma_{(1,0)}$};
        \node[gray, left] at (0,0.4){$2^{1/2}$};
        \node[gray, left] at (0,-0.4){$-2^{1/2}$};
        \draw[gray] (0,-0.4) -- (0,0.4);
        \draw[gray] (-0.1,0.4) -- (0.1,0.4);
        \draw[gray] (-0.1,-0.4) -- (0.1,-0.4);
    \end{scope}
    \begin{scope}[shift={(0,-2.25)}]
        \node at (0,0){\textopenbullet}; \node at (1,0){\textopenbullet};
        \node at (0.5,0){\textbullet}; \node at (1.5,0){\textbullet};
        \draw (0,-1) -- (0.5,1) node[above]{$\sigma_{(\ell,0)}$} -- (1,-0.5) -- (1.5,0);
        \node[gray, left] at (0,1){$2^{\ell/2}$};
        \node[gray, left] at (0,-1){$-2^{\ell/2}$};
        \node[gray, left] at (0,-0.5){$-\tfrac{1}{2}2^{\ell/2}$};
        \draw[gray] (0,-1) -- (0,1);
        \draw[gray] (-0.1,1) -- (0.1,1);
        \draw[gray] (-0.1,-1) -- (0.1,-1);
        \draw[gray] (-0.1,-0.5) -- (0.1,-0.5);
        
        \draw (0,0) -- (1.5,0);
        \draw[dashed] (1.5,0) -- (3.0,0);
        \draw (3.0,0) -- (5.0,0);
        
        \node at (3.0,0){\textbullet}; \node at (3.5,0){\textopenbullet};
        \node at (4.0,0){\textbullet}; \node at (4.5,0){\textopenbullet};
        \node at (5.0,0){\textbullet};
        \draw (3.0, 0) -- (3.5,-0.5) -- (4.0, 1.0) -- (4.5, -0.5) -- (5.0, 0);
        \node[above] at (4.0, 1.0){$\sigma_{(\ell, n)}$};
        
        \draw[dashed] (5.0, 0) -- (6.5, 0);
        \draw (6.5, 0) -- (8,0);
        
        \node at (8,0){\textopenbullet}; \node at (7,0){\textopenbullet};
        \node at (7.5,0){\textbullet}; \node at (6.5,0){\textbullet};
        \draw (8,-1) -- (7.5,1) node[above]{$\sigma_{(\ell,2^{\ell-1}-1)}$} -- (7,-0.5) -- (6.5,0);
    \end{scope}
    \end{scope}
\end{tikzpicture}
\vspace{-2em}
\caption{Left: three-point wavelet index set $\vee_\Sigma$ with parent-child relations; right: three-point wavelets.}
\label{fig:3pt-tree}
\end{figure}

\subsubsection{Basis on the test side}
We construct an $L_2(I)$-orthonormal basis $\Xi$.

For $\ell \geq 0$, define  the (discontinuous) piecewise linear scaling functions w.r.t.~a uniform partition into $2^\ell$ subintervals by $\Phi_\ell^\Xi := \{ \phi_{(\ell, n)}\colon 0 \leq n < 2^{\ell+1} \}$ where $\phi_{(0, 0)}(t) := \bbone_{[0,1]}(t)$ and $\phi_{(0, 1)}(t) := \sqrt{3}(2t - 1) \bbone_{[0,1]}$, and for $\ell \geq 1$, $\phi_{(\ell, 2k)}(t) := \phi_{(0, 0)}(2^\ell t - k)$ and $\phi_{(\ell, 2k+1)}(t) := \phi_{(0,1)}(2^\ell t - k)$. Let $\Xi_0 := \Phi_0^\Xi$, and define $\Xi_1 := \{\xi_{(1,0)}, \xi_{(1,1)}\}$ as in the right of Figure~\ref{fig:ortho-tree}. For $\ell \geq 2$, we take $\Xi_{\ell} := \{\xi_{(\ell, n)} : 0 \leq n < 2^\ell\}$ with
\[
\xi_{(\ell, 2k)}(t) := 2^{(\ell-1)/2} \xi_{(1, 0)}(2^{\ell-1}t - k), \quad  \xi_{(\ell, 2k+1)}(t) :=  2^{(\ell-1)/2} \xi_{(1, 1)}(2^{\ell-1}t - k).
\]

The resulting $\Xi := \cup_{\ell \geq 0} \Xi_\ell$ is an
\emph{orthonormal} basis for $L_2(I)$, and together with its scaling functions $\cup_\ell \Phi^\Xi_\ell$, the
conditions from \S\ref{sec:eval_time} are satisfied with $S(\xi_\mu) := \supp \xi_\mu$. We impose a parent-child relation analogously to~\eqref{eqn:parent-child}; see the left of Figure~\ref{fig:ortho-tree}.

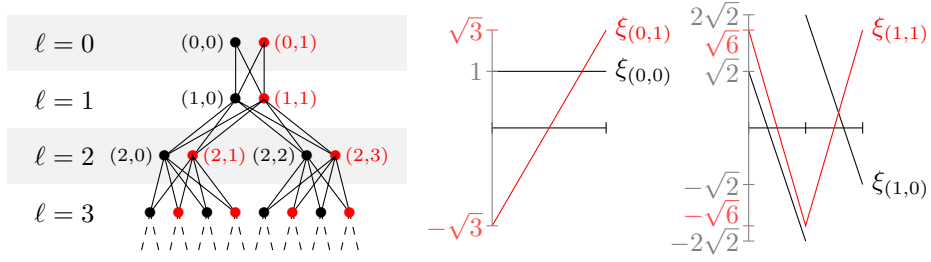
\begin{figure}[ht]
\centering
\begin{tikzpicture}[scale=0.75]
    \fill[black!5] (-2.5, 0.5) rectangle (4.5, 1.5); \node at (-1.5, 1){$\ell=0$};
    \fill[black!0] (-2.5, -0.5) rectangle (4.5, 0.5); \node at (-1.5, 0){$\ell=1$};
    \fill[black!5] (-2.5, -1.5) rectangle (4.5, -0.5); \node at (-1.5, -1){$\ell=2$};
    \fill[black!0] (-2.5, -2.5) rectangle (4.5, -1.5); \node at (-1.5, -2){$\ell=3$};
    \node at (1.5,1){\textbullet}; \node[left] at (1.5,1){$_{(0,0)}$};
    \node[red] at (2,1){\textbullet}; \node[red, right] at (2,1){$_{(0,1)}$};
    
    \draw (2,0) -- (1.5,1) -- (1.5,0);
    \draw (2,0) -- (2,1) -- (1.5,0);
    
    \node at (1.5,0){\textbullet}; \node[left] at (1.5,0){$_{(1,0)}$};
    \node[red] at (2,0){\textbullet}; \node[red, right] at (2,0){$_{(1,1)}$};
    
    \draw (0.75, -1) -- (1.5,0) -- (0.25,-1);
    \draw (2.75, -1) -- (1.5,0) -- (3.25,-1);
    \draw (0.75, -1) -- (2,0) -- (0.25,-1);
    \draw (2.75, -1) -- (2,0) -- (3.25,-1);
    
    \node at (0.25,-1){\textbullet}; \node[left] at (0.25,-1){$_{(2,0)}$};
    \node[red] at (0.75,-1){\textbullet}; \node[red, right] at (0.75,-1){$_{(2,1)}$};
    \node at (2.75,-1){\textbullet}; \node[left] at (2.75,-1){$_{(2,2)}$};
    \node[red] at (3.25,-1){\textbullet}; \node[red, right] at (3.25,-1){$_{(2,3)}$};
    
    \draw (0.25,-1) -- (0,-2) -- (0.75,-1) -- (0.5,-2) -- (0.25,-1);
    \draw (0.25,-1) -- (1,-2) -- (0.75,-1) -- (1.5,-2) -- (0.25,-1);
    
    \draw (2.75,-1) -- (2,-2) -- (3.25,-1) -- (2.5,-2) -- (2.75,-1);
    \draw (2.75,-1) -- (3,-2) -- (3.25,-1) -- (3.5,-2) -- (2.75,-1);
    
    \foreach \x in {0,0.5,1,1.5,2,2.5,3,3.5} {
    \draw[dashed] (\x - .15,-2.66) -- (\x,-2);
    \draw[dashed] (\x + .15,-2.66) -- (\x,-2);
    }
    \foreach \x in {0,1,2,3} {
    \node at (\x,-2){\textbullet};
    \node[red] at (\x+0.5,-2){\textbullet};
    }
    
    \begin{scope}[shift={(6.0,-.5)}]
    \draw (0,0) -- (2,0);
    \foreach \x in {0,2}
    {
        \draw (\x,-0.1) -- (\x,0.1);
    }
    
    \draw (0,1) -- (2,1); \node[right] at (2,1){$\xi_{(0,0)}$};
    \node[gray, left] at (0,1){$1$};
    \draw[red] (0,-1.73) -- (2,1.73); \node[red, right] at (2,1.73){$\xi_{(0,1)}$};
    \node[red!70, left] at (0,1.73){$\sqrt{3}$};
    \node[red!70, left] at (0,-1.73){$-\sqrt{3}$};
    \draw[gray] (0,-1.73) -- (0,1.73);
    \draw[gray] (-0.1,1) -- (0.1,1);
    \draw[red!70] (-0.1,1.73) -- (0.1,1.73);
    \draw[red!70] (-0.1,-1.73) -- (0.1,-1.73);
    
    \begin{scope}[shift={(4.5,0)}]
        \draw (0,0) -- (2,0);
        \foreach \x in {0,1,2}
        {
            \draw (\x,-0.1) -- (\x,0.1);
        }
        \draw[black] (0,1) -- (1, -2); \draw[black] (1,2) -- (2,-1);
        \node[right] at (2,-1.0){$\xi_{(1,0)}$};
        \draw[red] (0,1.73) -- (1,-1.73) -- (2,1.73);
        \node[right, red] at (2,1.73){$\xi_{(1,1)}$};
        \node[red!70, left] at (0,1.5){$\sqrt{6}$};
        \node[red!70, left] at (0,-1.5){$-\sqrt{6}$};
        
        \node[gray, left] at (0,2){$2\sqrt{2}$};
        \node[gray, left] at (0,-2){$- 2\sqrt{2}$};
        \node[gray, left] at (0,1){$\sqrt{2}$};
        \node[gray, left] at (0,-1){$- \sqrt{2}$};
        
        \draw[gray] (0,-2) -- (0,2);
        \draw[gray] (-0.1,1) -- (0.1,1);
        \draw[gray] (-0.1,2) -- (0.1,2);
        \draw[gray] (-0.1,-1) -- (0.1,-1);
        \draw[gray] (-0.1,-2) -- (0.1,-2);
        \draw[red!70] (-0.1,1.73) -- (0.1,1.73);
        \draw[red!70] (-0.1,-1.73) -- (0.1,-1.73);
    \end{scope}
    \end{scope}
\end{tikzpicture}
\vspace{-2em}
\caption{Left: orthonormal wavelet index set $\vee_\Xi$ with parent-child relations; right: the wavelets at levels 0 and 1.}
\label{fig:ortho-tree}
\end{figure}

\subsection{Finite elements in space}\label{sec:fem}
Let $\bbT$ be the family of all \emph{conforming} partitions of $\Omega$
into triangles that can be created by Newest Vertex
Bisection from some given conforming initial triangulation $\tria_\bot$
with an assignment of newest vertices satisfying the matching condition;
cf.~\cite{Stevenson2008TheBisection}.

Define $\mathfrak T := \cup_{\tria \in \mathbb T} \{T : T \in \tria\}$. For $T \in \mathfrak T$, set
$\gen(T)$ as the number of bisections needed to create $T$ from its
`ancestor' $T' \in \tria_\bot$. With $\mathfrak N$ the set of all
vertices of all $T \in \mathfrak T$, for $\nu \in \mathfrak N$ we set
$\gen(\nu) := \min\{ \gen(T) : \nu \text{ is a vertex of }T \in \mathfrak T\}$.

Any $\nu \in \mathfrak N$ with $\gen(\nu) > 0$ is the midpoint of an edge $e_\nu$ of
one or two $T \in \mathfrak T$ with $\gen(T) = \gen(\nu) - 1$. The set of
newest vertices $\tilde \nu$ of these $T$, so those vertices of $T$ with
$|\tilde \nu| = \gen(\nu) - 1$, are defined as the parents of $\nu$, denoted $\tilde \nu \triangleleft_{\mathfrak N} \nu$.
The set of \emph{godparents} of $\nu$, denoted $\gp(\nu)$,
are defined as the two endpoints of $e_\nu$. Vertices with $\gen(\nu) = 0$ have no parents or godparents.

\begin{example}
In Figure~\ref{fig:vertex-tree}, the parents of $\nu_4$ are $\nu_1$ and $\nu_3$ and its godparents are $\nu_0$, $\nu_2$; the sole parent of $\nu_5$ is $\nu_4$, and its godparents are $\nu_0$ and $\nu_3$.
\end{example}

\begin{prop}[{\cite{Diening2016}}] \label{prop:tree}
An (essentially) non-overlapping partition $\tria$ of $\overline \Omega$
into triangles is in $\mathbb T$ if and only if the set $N_\tria$ of
vertices of all $T \in \mathcal T$ forms a \emph{tree} in the sense of
\S\ref{sec:eval_time}, meaning that it contains every vertex of
generation zero as well as all parents of any $\nu \in N_\tria$;
see also Figure~\ref{fig:vertex-tree}.
\end{prop}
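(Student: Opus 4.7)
The plan is to prove both directions by induction, exploiting the fact that $\mathbb{T}$ is defined as the set of partitions reachable from $\tria_\bot$ by a finite sequence of newest vertex bisections.

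For the forward direction, I would induct on the number of NVB steps needed to produce $\tria$ from $\tria_\bot$. The base case $\tria = \tria_\bot$ has only generation-zero vertices, so the tree conditions hold vacuously. For the inductive step, a single NVB bisects the reference edge of one or two matching triangles sharing an edge $e$, introducing exactly one new vertex $\nu$ (the midpoint of $e$). By the definition given just before the proposition, the parents of $\nu$ are precisely the newest vertices of these bisected triangles, which already lay in the previous partition. Since the tree property holds for the previous partition by induction, it is preserved after the bisection.

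For the converse, I would induct on the maximum generation $k$ among vertices in $N_\tria$. The base case $k=0$ forces every triangle of $\tria$ to have all vertices in $\tria_\bot$; since $\tria$ is a non-overlapping conforming partition of $\overline\Omega$ and $\tria_\bot$ is already such a partition, one checks that $\tria = \tria_\bot \in \mathbb{T}$. For $k > 0$, pick any $\nu \in N_\tria$ with $\gen(\nu) = k$. By maximality, $\nu$ has no descendants in $N_\tria$, so around $\nu$ the mesh is as fine as it can be. I would then argue that the triangles of $\tria$ incident to $\nu$ come in one or two pairs that can be merged across the edge $e_\nu$ into the unique triangle(s) whose bisection along the newest-vertex rule would reproduce them. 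This yields a coarser conforming partition $\tria'$ with $N_{\tria'} = N_\tria \setminus \{\nu\}$, which remains a tree (removing a leaf preserves the tree property) and has strictly smaller maximum generation. By the inductive hypothesis, $\tria' \in \mathbb{T}$, and one more NVB step yields $\tria \in \mathbb{T}$.

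The main obstacle is the merging argument in the inductive step of the backward direction: one must show that around a maximal-generation vertex $\nu$ the incident triangles really can be combined across $e_\nu$ into valid NVB-parents, and that this combination produces a \emph{conforming} partition. Here the matching condition on the assignment of newest vertices in $\tria_\bot$ (on which the entire NVB framework rests, cf.~\cite{Stevenson2008TheBisection}) is essential, since it rules out configurations in which only one side of $e_\nu$ has been refined while the other contains a vertex that would obstruct a clean merge. The tree property of $N_\tria$, in particular that both endpoints of $e_\nu$ (the godparents of $\nu$) and the newest vertices of the hypothetical parent triangles (the parents of $\nu$) lie in $N_\tria$, supplies exactly the combinatorial data needed to identify and reverse the bisection; the remaining verification is local around $\nu$.
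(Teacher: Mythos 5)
You are proving a statement the paper itself does not prove: Proposition \ref{prop:tree} is quoted from \cite{Diening2016}, so there is no in-paper argument to compare against and your attempt has to stand on its own. Your forward direction is sound in outline, but it silently relies on the fact that every $\tria\in\bbT$ can be reached from $\tria_\bot$ through a chain of \emph{conforming} partitions, each obtained from the previous one by a single compatible bisection of the one or two triangles sharing their common refinement edge. This reduction is true but nontrivial (it is itself a consequence of the matching condition); if the bisections are performed in an arbitrary order, the tree property genuinely fails at non-conforming intermediate stages, because the newest vertex of the not-yet-bisected neighbour across the refinement edge need not be a vertex of the current partition. So the induction must be organised over compatible bisections, and that step needs a citation or an argument.

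The backward direction contains a genuine gap: nothing in your argument establishes conformity of $\tria$, yet membership in $\bbT$ requires it, and conformity cannot be deduced from the tree property alone. Concretely, let $\tria_\bot=\{T_a,T_b\}$ share their common refinement edge $e$, and let $\tria$ consist of $T_a$ together with the two children of $T_b$. Then $N_\tria$ consists of all generation-zero vertices plus the midpoint $\nu$ of $e$, whose parents are generation-zero vertices, so $N_\tria$ is a tree; but $\tria$ has a hanging node at $\nu$ and is not in $\bbT$. Your induction hits exactly this: after merging the two children of $T_b$ you reach $\tria_\bot\in\bbT$, but the concluding step ``one more NVB step yields $\tria\in\bbT$'' is false, since that single bisection produces a non-conforming partition. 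The proposition must therefore be read with conformity of $\tria$ as part of the hypothesis (equivalently, as the bijection between $\bbT$ and vertex trees that the paper uses in \S\ref{sec:triangview}), and your proof never flags or uses this. Under that reading your merging step can be completed: by conformity every triangle of $\tria$ whose closure contains $\nu$ has $\nu$ as a vertex, and by maximality of $\gen(\nu)$ it has generation exactly $\gen(\nu)$ (otherwise its newest vertex would be a vertex in $N_\tria$ of larger generation), hence is a child of the one or two parent triangles of $e_\nu$; so all these children are present and can be uncoarsened without creating hanging nodes. Finally, a small repair: removing one vertex of maximal generation need not lower the maximal generation, so run the induction on $\# N_\tria$ (or on the number of maximal-generation vertices) rather than on the maximal generation itself.
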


\begin{figure}[ht]
\centering
\begin{tikzpicture}[scale=1.8]
    \foreach \step in {0, 1, 2, 3}
    {
    \begin{scope}[shift={(1.25 * \step, 0)}]
        \draw (1,0) -- (1,1) -- (0,0) -- (1,0);
        \draw (0,1) -- (0,0) -- (1,1) -- (0,1);

        \ifnum \step=0
            \node[below] at (1,0){${\nu}_1$};
            \node[above] at (1,1){${\nu}_2$};
            \node[below] at (0,0){${\nu}_0$};
            \node[above] at (0,1){${\nu}_3$};
            \node at (0,0){\texttimes};
            \node at (1,0){\texttimes};
            \node at (0,1){\texttimes};
            \node at (1,1){\texttimes};
            \node[below] at (0.5,-0.25){$\tria_0$};
        \fi
        \ifnum \step>0
            \draw (1,0) -- (0,1);
        \fi
        \ifnum \step=1
            \node[right] at (0.5, 0.5){${\nu}_4$};
            \node at (0.5,0.5){\textbullet};
            \node[below] at (0.5,-0.25){$\tria_1$};
        \fi
        \ifnum \step>1
            \draw (0,0.5) -- (0.5,0.5);
            \draw (0.5, 1) -- (0.5,0.5);
        \fi
        \ifnum \step=2
            \node[below right] at (0,0.5){${\nu}_5$};
            \node[above] at (0.5,1){${\nu}_6$};
            \node at (0,0.5){\textbullet};
            \node at (0.5,1){\textbullet};
            \node[below] at (0.5,-0.25){$\tria_2$};
        \fi
        \ifnum \step>2
            \draw (0,0.5) -- (0.5,1.0);
        \fi
        \ifnum \step=3
            \node[right] at (0.25,0.75){${\nu}_7$};
            \node at (0.25,0.75){\textbullet};
            \node[below] at (0.5,-0.25){$\tria_3$};
        \fi
    \end{scope}
    }
    \begin{scope}[shift={(-1.5,1)}]
    \foreach \k in {0, 1, 2, 3}
    {
    \node at (\k/3, 0){\texttimes}; \node[above] at (\k/3, 0){${\nu}_\k$};
    }
    \draw (1/3, 0) -- (0.5, -0.33);
    \draw (1, 0) -- (0.5, -0.33);

    \node at (0.5, -0.33){\textbullet}; \node[right] at (0.5, -0.33){${\nu}_4$};
    \node at (0.25 + 0.5, -.66){\textbullet}; \node[right] at (0.25 + 0.5, -.66){${\nu}_6$};
    \draw (0.25 + 0.5, -.66) -- (0.5, -0.33);
    \node at (0.25, -.66){\textbullet}; \node[right] at (0.25, -.66){${\nu}_5$};
    \draw (0.25, -.66) -- (0.5, -0.33);

    \node at (0.5,-1.0){\textbullet}; \node[right] at (0.5,-1.0){${\nu}_7$};
    \draw (0.25, -.66) -- (0.5,-1.0) -- (0.75,-.66);
    \node[below] at (0.5,-1.25){$N_\tria$};
    \end{scope}
\end{tikzpicture}
\caption{Vertex tree $N_{\tria}$ and its triangulation $\tria$ shown level-by-level.}
\label{fig:vertex-tree}
\end{figure}
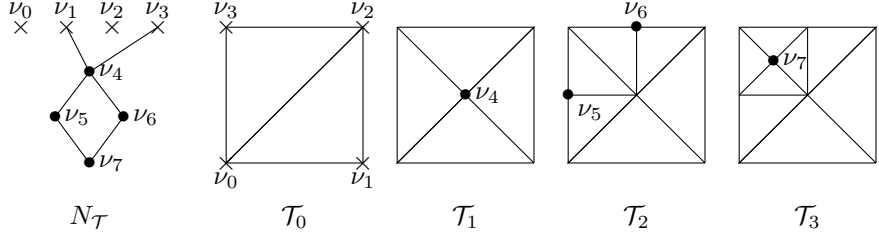

Let $\mathcal O$ be the collection of spaces $W_\tria$ of continuous
piecewise linears w.r.t.~$\tria \in \bbT$ vanishing on $\partial \Omega$.
For $\nu \in \mathfrak N$, we set $\psi_\nu$ as that continuous piecewise linear function on the \emph{uniform partition} $\tria_\nu := \{T \in \mathfrak T : \gen(T) = \gen(\nu)\} \in \bbT$ for which $\psi_\nu(\tilde \nu) = \delta_{\nu \tilde \nu}$ for $\tilde \nu \in \tria_\nu$. Setting $\mathfrak N_0 := \mathfrak N \setminus \partial \Omega$, the collection $\{\psi_\nu : \nu \in \mathfrak N_0\}$ is known as the \emph{hierarchical basis}.
For $\tria \in \bbT$, write $N_{\tria,0} := N_\tria \setminus \partial \Omega$ and $\Psi_\tria := \{ \psi_\nu : \nu \in N_{\tria, 0}\}$; it holds that $W_\tria = \spann \Psi_\tria$.

\subsubsection{Applying stiffness matrices}
The hierarchical basis satisfies conditions~\eqref{c1} and~\eqref{c2}, and so, the application of stiffness matrices $(A \Psi_\tria)(\Psi_\tria)$ for $A \in \{A_{\vec x}, M_{\vec x}\}$
can be done through $\mathtt{eval}(A)$.\footnote{This would require the definition of a suitable single-scale basis.}   
However, the computation in Theorem~\ref{thm1} does not involve the lower and upper parts of $A$. This crucial insight allows for a faster and easier approach using standard finite \newer{element techniques}:
$\spann \Psi_\tria$ is a continuous piecewise linear finite element space, so it has a \emph{canonical} single-scale basis
$\Phi_\tria := \spann\{ \phi_{\tria, \nu} \}$ characterized by $\phi_{\tria, \nu}(\tilde \nu) = \delta_{\nu \tilde \nu}$ for $\tilde \nu \in N_{\tria, 0}$, for which the application of $(A\Phi_\tria)(\Phi_\tria)$ at linear cost using local element matrices is standard. This is different from the general setting in \S\ref{sec:eval_time}, in that $\dim \Phi_\tria = \dim \Psi_\tria$ also for locally refined triangulations.
Let $T$ be the transformation characterized by $\Psi_\tria = T^\top \Phi_\tria$,
we find
\begin{equation}
(A \Psi_{\tria})(\Psi_{\tria}) = T^\top (A \Phi_\tria)(\Phi_{\tria}) T.
\label{eqn:hb2ss}
\end{equation}
We can apply $T$ in linear complexity by iterating over the vertices bottom-up
while applying elementary local transformations in which not parent-child, but \emph{godparent}-child relations play a role.


\subsection{Inf-sup stable family of trial- and test spaces}\label{sec:stable}
With $\Sigma$ and $\Xi$ from \S\ref{sec:wit} and $\Psi_{\mathfrak N_0} := \{\psi_\nu : \nu \in \mathfrak N_0\}$ from~\S\ref{sec:fem}, we find that \newer{$X = \overline{\spann(\Sigma \otimes \Psi_{\mathfrak N_0})}$} and \newer{$Y = \overline{\spann (\Xi \otimes \Psi_{\mathfrak N_0})}$}. We now turn to the construction of $X^\delta$ and $Y^\delta$.

\begin{definition}
For a double-tree ${\bf \Lambda}^\delta  \subset \vee_\Sigma \times \mathfrak N$, define ${\bf \Lambda}^\delta_0 := {\bf \Lambda}^\delta \setminus \vee_\Sigma \times \partial \Omega$. We
construct our `trial' space as
\[
X^\delta := \spann \{ \sigma_\lambda \otimes \psi_\nu : (\lambda, \nu) \in {\bf \Lambda}^\delta_0\}.
\]
Defining the double-tree ${\bf \Lambda}_{Y,0}^\delta \subset \vee_\Xi \times \mathfrak N_0$ as
\[
{\bf \Lambda}_{Y,0}^\delta := \{ (\mu, \nu) : \exists (\lambda, \nu) \in {\bf \Lambda}_0^\delta, \, \mu \in \vee_\Xi, \,  |\mu| = |\lambda|, \,  |\supp \xi_\mu \cap \supp \sigma_\lambda| > 0\},
\]
we construct our `test' space as $Y^\delta = Y^\delta(X^\delta) := \spann \{\xi_\mu \otimes \psi_\nu : (\mu, \nu) \in {\bf \Lambda}_{Y,0}^\delta \}$.
\end{definition}

\begin{theorem}[{\cite[Props.~5.2, 5.3]{Stevenson2020b}}]
Define
$\Delta := \{\delta : {\bf \Lambda}^\delta \subset \vee_\Sigma \times \mathfrak N\text{ is a double-tree}\}$
equipped with the partial ordering $\delta \preceq \tilde \delta \iff {\bf \Lambda}^\delta \subseteq {\bf \Lambda}^{\tilde \delta}$.
With $X^\delta$ and $Y^\delta$ as above, uniform inf-sup stability holds; cf.~\eqref{eqn:stability}.
\end{theorem}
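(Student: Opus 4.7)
The plan is to prove the uniform inf-sup condition~\eqref{eqn:stability} via Fortin's criterion: construct a family of operators $Q^\delta \colon Y \to Y^\delta$, $\delta \in \Delta$, that are uniformly bounded in $\cL(Y,Y)$ and satisfy the Fortin identity
\[
(\partial_t w)(v - Q^\delta v) = 0 \qquad (w \in X^\delta,\ v \in Y).
\]
Given such $Q^\delta$, the chain
\[
\|\partial_t w\|_{Y'}=\sup_{0\neq v \in Y}\frac{(\partial_t w)(Q^\delta v)}{\|v\|_Y}\leq \|Q^\delta\|_{\cL(Y,Y)}\sup_{0\neq v \in Y^\delta}\frac{(\partial_t w)(v)}{\|v\|_Y}
\]
forces $\gamma_\Delta \geq \bigl(\sup_\delta \|Q^\delta\|_{\cL(Y,Y)}\bigr)^{-1} > 0$.

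The candidate $Q^\delta$ is of tensor-product form $Q_t^\delta \otimes Q_x^\delta$, where $Q_t^\delta$ is the $L_2(I)$-orthogonal projection onto the span of a subset of $\Xi$, and $Q_x^\delta$ is a Scott--Zhang-type quasi-interpolator onto $W_\tria$ for a suitable $\tria \in \bbT$. Because $\Xi$ is $L_2(I)$-orthonormal, $\|Q_t^\delta\|_{\cL(L_2(I),L_2(I))}=1$, and standard Scott--Zhang analysis gives $\|Q_x^\delta\|_{\cL(H_0^1(\Omega),H_0^1(\Omega))}\lesssim 1$ uniformly in $\tria$; this yields a uniform bound on $Q^\delta$ in the cross-norm of $Y = L_2(I; H_0^1(\Omega))$. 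A suitable triangulation exists by the double-tree hypothesis: $P_1 {\bf \Lambda}^\delta$ is a tree of vertices in $\mathfrak N$ and, by Proposition~\ref{prop:tree}, defines a conforming $\tria \in \bbT$.

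The heart of the argument is verifying the Fortin identity. The decisive observation is that for every $(\lambda, \nu) \in {\bf \Lambda}^\delta_0$, the derivative $\sigma_\lambda'$ is piecewise constant on the dyadic partition at scale $|\lambda|$ with support in $\supp \sigma_\lambda$, so by $L_2(I)$-orthonormality of $\Xi$ it admits a finite expansion $\sigma_\lambda' = \sum_\mu c_\mu \xi_\mu$ over indices $\{\mu : |\mu|\leq |\lambda|,\ \supp \xi_\mu \cap \supp \sigma_\lambda \neq \emptyset\}$. The double-tree property of ${\bf \Lambda}^\delta$ ensures that the time-fiber at $\nu$ contains every ancestor of $\lambda$ in $\vee_\Sigma$, and the definition of ${\bf \Lambda}_{Y,0}^\delta$ then guarantees that each $(\mu, \nu)$ appearing in the expansion lies in ${\bf \Lambda}_{Y,0}^\delta$. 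Combined with the reproducing property of Scott--Zhang for the hierarchical basis functions $\psi_\nu$ indexed by $P_1{\bf \Lambda}^\delta$, this is exactly what is needed to conclude the Fortin identity from a duality pairing computation on each basis pair $\sigma_\lambda \otimes \psi_\nu$ of $X^\delta$.

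The main obstacle I anticipate is that the spatial fibers of ${\bf \Lambda}_{Y,0}^\delta$ at different time indices need not coincide, so a single tensor-product $Q_t^\delta \otimes Q_x^\delta$ with one uniform $\tria$ typically maps outside of $Y^\delta$. The resolution is to either construct a non-tensor-product $Q^\delta$ that respects the fiber structure directly, or to decompose the projection level-by-level in time, choosing at each time level a spatial Scott--Zhang projection tailored to the corresponding spatial fiber; the double-tree hypothesis ensures the compatibility between levels needed to stitch these pieces into a single $H_0^1(\Omega)$-stable operator. This is the delicate point addressed in \cite[Props.~5.2, 5.3]{Stevenson2020b}.
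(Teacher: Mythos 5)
Your overall strategy (a Fortin operator, equivalently a per-time-level reduction exploiting the $L_2(I)$-orthonormality of $\Xi$) is in the right spirit, and indeed close to how the cited reference argues; note that the present paper does not reprove the statement but simply invokes \cite[Props.~5.2, 5.3]{Stevenson2020b}. However, as written your proposal has a genuine gap, and one step is wrong. The concrete operator you propose does not satisfy the Fortin identity: writing $\partial_t w=\sum_\mu \xi_\mu\otimes w_\mu$ and $v=\sum_\mu\xi_\mu\otimes v_\mu$, the identity $(\partial_t w)(v-Q^\delta v)=0$ requires $\langle \psi_\nu,(I-Q_x^\delta)v_\mu\rangle_{L_2(\Omega)}=0$ for every relevant $\nu$, i.e.\ that the \emph{adjoint} of the spatial operator reproduces the trial functions $\psi_\nu$ ($L_2(\Omega)$-orthogonality of the error). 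The Scott--Zhang reproduction property $Q_x^\delta\psi_\nu=\psi_\nu$, which you invoke, is not the needed property, and Scott--Zhang does not provide the needed one; you would have to use, e.g., the $L_2(\Omega)$-orthogonal projection onto a space containing the relevant $\psi_\nu$, whose uniform $H^1_0(\Omega)$-boundedness on adaptively NVB-refined spaces is itself a nontrivial ingredient that your sketch does not address (``standard Scott--Zhang analysis'' does not cover it).

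Second, you correctly observe that a single tensor-product $Q_t^\delta\otimes Q_x^\delta$ with one triangulation does not map into $Y^\delta$, but the repair --- a fiber-dependent operator $Q^\delta v:=\sum_{\mu\in P_0{\bf \Lambda}_{Y,0}^\delta}\xi_\mu\otimes Q_\mu v_\mu$ with $Q_\mu$ a projection onto $\spann\{\psi_\nu:(\mu,\nu)\in{\bf \Lambda}_{Y,0}^\delta\}$ --- is exactly the part you defer to \cite{Stevenson2020b}, so the heart of the proof is missing. Even granting that construction, the decisive combinatorial fact is only asserted, not proven: $\sigma_\lambda'$ has nonzero $\xi_\mu$-coefficients at \emph{all} levels $|\mu|\le|\lambda|$, whereas ${\bf \Lambda}_{Y,0}^\delta$ is generated only from pairs with $|\mu|=|\lambda|$; to conclude that every such $(\mu,\nu)$ lies in ${\bf \Lambda}_{Y,0}^\delta$ one must show that each overlapping $\mu$ at level $k<|\lambda|$ overlaps some ancestor $\lambda'$ of $\lambda$ at level $k$ (using the support structure behind \eqref{eqn:parent-child}) and then use that the time fibers of the double-tree contain all ancestors. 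Without the explicit operator, this coverage argument, and the uniform spatial stability, the proposal does not establish the uniform inf-sup condition \eqref{eqn:stability}.
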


\begin{definition}
Given a double-tree ${\bf \Lambda}^\delta \subset \vee_\Sigma \times \mathfrak N$, we define ${\bf \Lambda}^\udelta \supset {\bf \Lambda}^\delta$ by adding, for $(\lambda, \nu) \in {\bf \Lambda}^\delta$ and any child $\tilde \lambda$ of $\lambda$ and descendant $\tilde \nu$ of $\nu$ up to generation $2$, all pairs $(\tilde \lambda, \nu)$ and $(\lambda, \tilde \nu)$.
We expect this choice of $X^{\udelta}$ to provide saturation; cf.~\eqref{eqn:saturation}.
\end{definition}

\subsection{Preconditioners}\label{sec:preconds}
We follow~\cite[\S5.6]{Stevenson2020b} for the construction of optimal preconditioners $K_Y^\delta $ for ${E_Y^\delta}' A E_Y^\delta$ and $K_X^\delta$ for $S^{\udelta \delta}$ necessary for solving~\eqref{eqn:discr-schur}.
With notation from Definition~\ref{def:dbltree}, we equip $X^\delta$ and $Y^\delta$ with bases
\[
\begin{cases}
\bigcup\limits_{\lambda \in P_0{\bf \Lambda}^\delta_0} \sigma_\lambda \otimes \Psi_\lambda^\delta \quad \text{with} \quad
\Psi_\lambda^\delta := \{\psi_\nu : \nu \in ({\bf \Lambda}^\delta_0)_{1,\lambda}\}, \\
\bigcup\limits_{\mu \in P_0{\bf \Lambda}^\delta_{Y,0}} \xi_\mu \otimes \Psi_\mu^\delta \quad \text{with} \quad
\Psi_\mu^\delta := \{\psi_\nu : \nu \in ({\bf \Lambda}^\delta_{Y,0})_{1,\mu}\}.
\end{cases}
\]
Matrix representations of preconditioners from~\cite[\S5.6]{Stevenson2020b} are then given by
\[
\begin{cases}
{\bf K}_Y^\delta := \blockdiag[{\bf K}_\mu^\delta]_{\mu \in P_0{\bf \Lambda}^\delta_{Y,0}}
\quad \text{where} \quad
{\bf K}_\mu^\delta \eqsim ({\bf A}_\mu^\delta)^{-1}, \\
{\bf K}_X^\delta := \blockdiag[{\bf K}_\lambda^\delta {\bf A}_\lambda^\delta {\bf K}_\lambda^\delta]_{\lambda \in P_0 {\bf \Lambda}^\delta_0} \quad \text{where} \quad {\bf K}_\lambda^\delta \eqsim ({\bf A}_\lambda^\delta + 2^{|\lambda|} {\bf M}_\lambda^\delta)^{-1}
\end{cases}
\]
with ${\bf A}_\mu^\delta := (A_{\vec x} \Psi_\mu^\delta)(\Psi_\mu^\delta)$, ${\bf A}_{\lambda}^\delta := (A_{\vec x} \Psi_\lambda^\delta)(\Psi_\lambda^\delta)$, and ${\bf M}_{\lambda}^\delta := (M_{\vec x} \Psi_\lambda^\delta)(\Psi_\lambda^\delta)$.
Suitable spatial preconditioners ${\bf K}_\mu^\delta$ are provided by multigrid methods.
In~\cite{Olshanskii2000} it was shown that for quasi-uniform triangulations, satisfying a `full-regularity' assumption, a multiplicative multigrid method yields suitable ${\bf K}_\lambda^\delta$, and we assume these results to hold for our locally refined triangulations $\tria \in \mathbb T$ as well. In \S\ref{sec:multigrid} below, we detail our linear-complexity multigrid implementation following~\cite{Wu2017a}.

\subsection{Right-hand side}
\label{sec:rhs}
We follow~\cite[\S 6.4]{Stevenson2020b}.
For $g \in C(\overline{I \times \Omega})$, $u_0 \in C(\overline \Omega)$, we can approximate the right-hand side of~\eqref{eqn:discr-schur} by  interpolants,  avoiding quadrature issues.

The procedure of \S\ref{sec:fem} for constructing the hierarchical basis $\Psi_{\mathfrak N} := \{\psi_\nu : \nu \in \mathfrak N\}$ can be applied in time as well, yielding the basis $\{\psi_\lambda : \lambda \in \vee_\mathfrak I\}$ from Figure~\ref{fig:time-hbf} which index set $\vee_{\mathfrak I}$ coincides with $\vee_\Sigma$.
We construct $\{\tilde{\psi}_\nu : \nu \in \mathfrak N\} \subset C(\overline{\Omega})'$ biorthogonal to $\Psi_{\mathfrak N}$, with $\tilde{\psi}_\nu := \delta_\nu - \sum_{\tilde \nu \in \gp(\nu)}\delta_{\tilde \nu}/2$.
In time, define $\{\tilde{\psi}_\lambda : \lambda \in \vee_{\mathfrak I}\} \subset C(\overline I)'$ analogously.
Define the vectors ${\bf g} := [(\tilde{\psi}_\lambda \otimes \tilde{\psi}_\nu)(g)]_{(\lambda, \nu) \in {\bf \Lambda}^\delta}$ and ${\bf u}_0 := [\tilde{\psi}_\nu(u_0)]_{\nu \in P_1{\bf \Lambda}^\delta}$.
Upon replacing $(g, u_0)$ in~\eqref{eqn:discr-schur} by the interpolants
\[
{}^{\delta\!} g := \sum_{(\lambda, \nu) \in {\bf \Lambda}^\delta} {\bf g}_{(\lambda, \nu)} \psi_\lambda \otimes \psi_\nu, \quad
{}^{\delta\!} u_0 := \sum_{\nu \in P_1 {\bf \Lambda}^\delta} {\bf u}_{0,\nu} \psi_\nu,
\]
we can evaluate its right-hand side in linear complexity by computing the quantities
\begin{align*}
[\langle \xi_{\mu} \otimes \psi_{\nu}, {}^{\delta\!} g \rangle_{L_2(I \times \Omega)}]_{(\mu, \nu) \in {\bf \Lambda}_{Y,0}^{\hat\delta}}
&= R_{{\bf \Lambda}^{\hat\delta}_{Y,0}} (M_t \otimes M_{\bf x}) I_{{\bf \Lambda}^\delta} {\bf g}, \\
[\sigma_{\lambda}(0) \langle
\psi_{\nu}, {}^{\delta\!} u_0 \rangle_{L_2(\Omega)}]_{(\lambda, \nu) \in {\bf \Lambda}^\delta_0}
&= [\sigma_{\lambda}(0) {\bf w}_{\nu}]_{(\lambda, \nu) \in {\bf \Lambda}^\delta_0}\\
\text{where} \quad {\bf w} &=(M_{\bf x} \Psi_{\mathfrak N}|_{P_1 {\bf \Lambda}^\delta})(\Psi_{\mathfrak N}|_{P_1 {\bf \Lambda}^\delta}) {\bf u}_0.
\end{align*}

\subsection{Two-level basis}\label{sec:theta-delta}
We now discuss the construction of a uniformly $X$-stable basis $\Theta_\delta$,
needed in the local error estimator ${\bf r}^\delta$ of~\eqref{eqn:residual}.
Following \cite[\S 6.3]{Stevenson2020b},
define a \emph{modified hierarchical basis} $\{\hat{\psi}_\nu : \nu \in \mathfrak N_0\}$ by
\[
\hat{\psi}_\nu = \psi_\nu~~\text{when}~~\gen(\nu) = 0, \quad \text{else} \quad \hat{\psi}_\nu := \psi_\nu - \frac{\sum_{\{\tilde \nu \in \mathfrak N: \tilde \nu \triangleleft_{\mathfrak N} \nu\}} \frac{\int_\Omega \psi_\nu \dif {\bf x}}{ \int_\Omega \psi_{\tilde \nu} \dif {\bf x}}\psi_{\tilde \nu}}{\# \{\tilde \nu \in \mathfrak N: \tilde \nu \triangleleft_{\mathfrak N} \nu\}}.
\]
For any $\tria \in \mathbb T$, $W_\tria = \spann \{ \hat{\psi}_\nu : \nu \in N_{\tria,0}\} = \spann \Psi_\tria$  and the transformation from modified to unmodified hierarchical basis can be performed in linear complexity.
For $\underline{\tria} \succeq \tria \in \mathbb T$, ${\bf d} \in \ell_2(N_{\underline \tria,0} \setminus N_{\tria,0})$ and $v \in W_\tria$,~\cite[Lem.~6.7]{Stevenson2020b} shows that
\be\label{eqn:h1stab}
\begin{cases}
\|v + \sum_{\nu} {\bf d}_\nu \hat \psi_\nu\|_{H^1(\Omega)}^2 \eqsim \|v\|^2_{H^1(\Omega)} + \|{\bf d}\|^2, \\
\|v + \sum_\nu {\bf d}_\nu \hat \psi_\nu\|_{H^{-1}(\Omega)}^2 \eqsim \|v\|^2_{H^{-1}(\Omega)} + \sum_\nu 4^{-\gen(\nu)} |{\bf d}_\nu|^2,
\end{cases}
\ee 
with the constants in the $\eqsim$-symbols dependent on $\max\limits_{\{\underline \tria \ni \underline T \subset T \in \tria\}}\{\gen(\underline T) - \gen(T)\}$ only.
We then construct a basis for $X^\udelta \ominus X^\delta$ as
\be \label{eqn:theta}
\Theta_\delta := \{ e_{\lambda \nu} \sigma_\lambda \otimes \hat{\psi}_\nu : (\lambda, \nu) \in {\bf \Lambda}^\udelta_0 \setminus {\bf \Lambda}^\delta_0\} \quad \text{where} \quad \frac{1}{e_{\lambda \nu}} = \sqrt{1 + 4^{|\lambda| - \gen(\nu)}}
\ee

Define the \emph{gradedness} of a double-tree ${\bf \Lambda}^\delta \subset \vee_\Sigma \times \mathfrak N$ as the smallest $L_\delta \in \N$ for which every $(\lambda, \nu) \in {\bf \Lambda}^\delta$
with $\tilde \nu$ an ancestor of $\nu$ with $\gen(\nu) - \gen(\tilde \nu) = L_\delta$, it holds that $(\breve \lambda, \tilde \nu) \in {\bf \Lambda}^\delta$ for all $\breve\lambda \triangleleft_\Sigma \lambda$.
Thanks to $\Sigma$ being a (scaled) Riesz basis for $L_2(I)$ and $H^1(I)$, together with the $H^1(\Omega)$- and $H^{-1}(\Omega)$-stable splittings of~\eqref{eqn:h1stab}, it holds that
\[
\|z + {\bf c}^\top \Theta_\delta\|_X^2 \eqsim \|z\|_X^2 + \|{\bf c}\|^2 \quad ({\bf c} \in \ell_2({\bf \Lambda}^\udelta_0 \setminus {\bf \Lambda}^\delta_0), z \in X^\delta),
\]
with the constant in the $\eqsim$-symbol dependent on $L_\delta$ only,
so when $L_\delta$ is uniformly bounded, condition~\eqref{eqn:X-stable} is satisfied.

\section{Implementation}\label{sec:impl}
A tree-based implementation of the aforementioned adaptive algorithm in C++ can be found at~\cite{VanVenetie2021}.
In this section, we describe our design choices for a linear complexity implementation.

\subsection{Trees and linear operators in one axis}
In \S\ref{sec:lincomp}, we consider an abstract multilevel collection $\Psi$ indexed on $\vee_\Psi$. Endowed with a parent-child relation, $\vee_\Psi$ has a tree-like structure that we call a \emph{mother tree}; see also Figures~\ref{fig:3pt-tree} and~\ref{fig:ortho-tree}.

In our applications, the support of a wavelet $\psi_\lambda$ is a union of simplices of generation $|\lambda|$.
In time, these simplices are subintervals of $I$ found by dyadic refinement.
In space, they are elements of $\mathfrak T$, the collection of all triangles found by newest vertex bisection.
Endowed with the natural parent-child relation, both collections of simplices have a tree structure we call the \emph{domain mother tree}.
Every wavelet $\psi_\lambda$ stores references to the simplices $T$ of generation $|\lambda|$ that make up its support; conversely, every $T$ stores a reference to $\psi_\lambda$.

Every mother tree $\vee$ is stored once in memory, and every node $\lambda \in \vee$ stores references to its parents, children, and siblings.
We treat the mother tree as infinite by \emph{lazy initialization}, constructing new nodes as they are needed.

\subsubsection{Trees}
We store a tree $\Lambda \subset \vee$ using the parent-child relation,
and additionally, at each $\lambda \in \Lambda$ store a reference to the corresponding node in $\vee$.
This allows us to compare different trees subject to the same mother tree.
This tree-like representation does not allow direct access of arbitrary nodes: in any
operation, we traverse $\Lambda$ from its roots in breadth-first, or level-wise, order.

\subsubsection{Tree operations}
One important operation is the \texttt{union} of one tree $\Lambda$ into another $\breve \Lambda$.
This can be implemented by traversing both trees simultaneously in breadth-first order.
The union allows us to easily perform high-level operations, such as vector addition: given two vectors ${\bf c} \in \ell_2(\Lambda)$,
${\bf d} \in \ell_2(\breve \Lambda)$ on the same mother tree $\vee$, we use the union to perform
${\bf c} := {\bf c} + {\bf d}$. See Figure~\ref{fig:tree-refine} for an example.
\begin{figure}[ht]
\centering
\begin{tikzpicture}[scale=0.8]
    \begin{scope}
    \node[red] at (0,1){\textbullet}; \node[red,above] at (0,1){$2$};
    \node[red] at (4,1){\textbullet}; \node[red,above] at (4,1){$3$};
    
    \draw[red] (0,1) -- (2,0) -- (4,1);
    \node[red] at (2,0){\textbullet}; \node[red,above] at (2,0){$1$};
    \draw (2,0) -- (1,-1);
    \node at (1,-1){\textbullet};
    \draw[red] (2,0) -- (3,-1);
    \node[red] at (3,-1){\textbullet}; \node[red,above] at (3,-1){$2$};
    
    \draw (1,-1) -- (0.5,-2);
    \node at (0.5,-2){\textbullet};
    \draw (1,-1) -- (1.5,-2);
    \node at (1.5,-2){\textbullet};
    \draw (3,-1) -- (2.5,-2);
    \node at (2.5,-2){\textbullet};
    \draw (3,-1) -- (3.5,-2);
    \node at (3.5,-2){\textbullet};
    
    \draw[dashed] (3.75,-2.65) -- (3.5,-2);
    \draw[dashed] (2.75,-2.65) -- (2.5,-2);
    \draw[dashed] (1.75,-2.65) -- (1.5,-2);
    \draw[dashed] (0.75,-2.65) -- (0.5,-2);
    \draw[dashed] (3.25,-2.65) -- (3.5,-2);
    \draw[dashed] (2.25,-2.65) -- (2.5,-2);
    \draw[dashed] (1.25,-2.65) -- (1.5,-2);
    \draw[dashed] (0.25,-2.65) -- (0.5,-2);
    \end{scope}
    \begin{scope}[shift={(5,0)}]
    \node[blue] at (0,1){\textbullet}; \node[blue,above] at (0,1){$1$};
    \node[blue] at (4,1){\textbullet}; \node[blue,above] at (4,1){$2$};
    
    \draw[blue] (0,1) -- (2,0) -- (4,1);
    \node[blue] at (2,0){\textbullet}; \node[blue,above] at (2,0){$3$};
    \draw[blue] (2,0) -- (1,-1);
    \node[blue] at (1,-1){\textbullet}; \node[blue,above] at (1,-1){$4$};
    \draw (2,0) -- (3,-1);
    \node at (3,-1){\textbullet};
    
    \draw[blue] (1,-1) -- (0.5,-2);
    \node[blue] at (0.5,-2){\textbullet}; \node[blue,above] at (0.5,-2){$1$};
    \draw[blue] (1,-1) -- (1.5,-2);
    \node[blue] at (1.5,-2){\textbullet}; \node[blue,above] at (1.5,-2){$1$};
    \draw (3,-1) -- (2.5,-2);
    \node at (2.5,-2){\textbullet};
    \draw (3,-1) -- (3.5,-2);
    \node at (3.5,-2){\textbullet};
    
    \draw[dashed] (3.75,-2.65) -- (3.5,-2);
    \draw[dashed] (2.75,-2.65) -- (2.5,-2);
    \draw[dashed] (1.75,-2.65) -- (1.5,-2);
    \draw[dashed] (0.75,-2.65) -- (0.5,-2);
    \draw[dashed] (3.25,-2.65) -- (3.5,-2);
    \draw[dashed] (2.25,-2.65) -- (2.5,-2);
    \draw[dashed] (1.25,-2.65) -- (1.5,-2);
    \draw[dashed] (0.25,-2.65) -- (0.5,-2);
    \end{scope}
    \begin{scope}[shift={(10,0)}]
    \node[red] at (0,1){\textbullet}; \node[red,above] at (0,1){$3$};
    \node[red] at (4,1){\textbullet}; \node[red,above] at (4,1){$5$};
    
    \draw[red] (0,1) -- (2,0) -- (4,1);
    \node[red] at (2,0){\textbullet}; \node[red,above] at (2,0){$4$};
    \draw[red] (2,0) -- (1,-1);
    \node[red] at (1,-1){\textbullet}; \node[red,above] at (1,-1){$4$};
    \draw[red] (2,0) -- (3,-1);
    \node[red] at (3,-1){\textbullet}; \node[red,above] at (3,-1){$2$};
    
    \draw[red] (1,-1) -- (0.5,-2);
    \node[red] at (0.5,-2){\textbullet}; \node[red,above] at (0.5,-2){$1$};
    \draw[red] (1,-1) -- (1.5,-2);
    \node[red] at (1.5,-2){\textbullet}; \node[red,above] at (1.5,-2){$1$};
    \draw (3,-1) -- (2.5,-2);
    \node at (2.5,-2){\textbullet};
    \draw (3,-1) -- (3.5,-2);
    \node at (3.5,-2){\textbullet};
    
    \draw[dashed] (3.75,-2.65) -- (3.5,-2);
    \draw[dashed] (2.75,-2.65) -- (2.5,-2);
    \draw[dashed] (1.75,-2.65) -- (1.5,-2);
    \draw[dashed] (0.75,-2.65) -- (0.5,-2);
    \draw[dashed] (3.25,-2.65) -- (3.5,-2);
    \draw[dashed] (2.25,-2.65) -- (2.5,-2);
    \draw[dashed] (1.25,-2.65) -- (1.5,-2);
    \draw[dashed] (0.25,-2.65) -- (0.5,-2);
    \end{scope}
\end{tikzpicture}
\vspace{-1em}
\caption{Left: ${\bf c} \in \ell_2(\Lambda)$ for $\Lambda \subset \vee_{\mathfrak I}$; Middle: ${\bf d} \in \ell_2(\breve \Lambda)$ for $\breve \Lambda \subset \vee_{\mathfrak I}$; Right: in-place sum ${\bf c } := {\bf c} + {\bf d}$.}
\label{fig:tree-refine}
\end{figure}

\subsubsection{Tree operations in time}
The routines \texttt{eval}, \texttt{evalupp}, and \texttt{evallow} from \S\ref{sec:eval_time} involve various level-wise index sets (represented as arrays of references into their mother trees). One example is $\breve{\Pi}_B = \{\lambda \in \breve{\Pi} : \big|\supp \breve{\phi}_\lambda \cap \cup_{\mu \in \Lambda_\ell}S(\mu)\big|>0\}$, which we constructed efficiently using the domain mother tree; see
Algorithm~\ref{alg:constructPiBout}.

\begin{algorithm}\label{alg:constructPiBout}
\KwData{$\ell \in \N$, $\breve{\Pi} \subset \breve{\Delta}_{\ell-1}$, $\Lambda_\ell \subset \vee_\ell$.  }
\KwResult{$[\breve{\Pi}_A, \breve{\Pi}_B]$ where $\breve{\Pi}_A=\breve{\Pi} \setminus \breve{\Pi}_B$,  $\breve{\Pi}_B=\{\lambda \in \breve{\Pi} : \big|\supp \breve{\phi}_\lambda \cap \cup_{\mu \in \Lambda_\ell}S(\mu)\big|>0\}$.
}
$\breve{\Pi}_A := \emptyset$\;
$\breve{\Pi}_B := \emptyset$\;
\For{$\mu \in \Lambda_\ell$}{
    \For(\tcp*[f]{We have $S(\psi_\mu) = \supp \psi_\mu$.}){$T \in \psi_\mu.\mathtt{support}$}{
        $T$.\texttt{parent}.\texttt{marked} := true\;
    }
}
\For{$\lambda \in \breve{\Pi}$}{
    \If{$\exists T \in \breve \phi_\lambda.\mathtt{support}$ with $T.\mathtt{marked}$ = true}
    {
        $\breve{\Pi}_B$.insert($\lambda$)\;
    }
    \Else{
        $\breve{\Pi}_A$.insert($\lambda$)\;
    }
}
\For{$\mu \in \Lambda_\ell$}{
    \For{$T \in \psi_\mu.\mathtt{support}$}{
        $T$.\texttt{parent}.\texttt{marked} := false\;
    }
}
\caption{The construction of $\breve{\Pi}_B$.}
\end{algorithm}

We can apply the linear operators appearing in the routines of~\S\ref{sec:eval_time} efficiently by again traversing the domain mother tree;
for example, Algorithm~\ref{alg:apply_operator} details a matrix-free application of $(A\Phi|_{\Pi})(\breve{\Phi}|_{\breve{\Pi}})$.
\begin{algorithm}\label{alg:apply_operator}
\KwData{Index sets $\Pi \subset \Delta_{\ell}, \breve{\Pi} \subset \breve{\Delta}_{\ell}$, $\vec{d} \in \ell_2(\Pi)$, \mbox{local and linear} $A \colon \spann \Phi \to \spann \breve{\Phi}'.$}
\KwResult{$\vec{e} = (A\Phi|_{\Pi})(\breve{\Phi}|_{\breve{\Pi}}) \vec{d}$}

\lFor{$\lambda \in \Pi$}{$\phi_\lambda.\mathtt{data} := \vec{d}_{\lambda}$}

\For{$\mu \in \breve{\Pi}$}{
        ${\bf e}_\lambda := 0$\;
    \For{$T \in \breve{\phi}_\mu.\mathtt{support}$} {
        \For(\tcp*[f]{$\{\phi_\lambda : \lambda \in \Delta_\ell, |\supp \phi_\lambda \cap T| > 0\}$}){$\phi_\lambda \in T.\mathtt{functions}(\Delta_{\ell})$} {
    ${\bf e}_{\lambda} := {\bf e}_{\lambda} + A(\phi_\lambda)(\breve{\phi}_\mu|_T) \cdot \phi_\lambda.\mathtt{data}$\;
        }

    }
}
\lFor{$\lambda \in \Pi$}{$\phi_\lambda.\mathtt{data} := 0$}

\caption{The computation of $\vec{e} = (A\Phi|_{\Pi})(\breve{\Phi}|_{\breve{\Pi}}) \vec{d}$.}

\end{algorithm}

\subsubsection{Operations in space}\label{sec:triangview}\label{sec:multigrid}
We can construct a triangulation $\tria$ from a vertex tree $N_\tria$
in linear complexity. First mark every $\nu \in N_\tria$ in its mother tree, then traverse the
domain mother tree $\mathfrak T$. A triangle $T$ visited in this traversal is in $\tria$ exactly when the newest vertex of its children is not marked.

For the preconditioners ${\bf K}_\mu^\delta$ and ${\bf K}_\lambda^\delta$ from~\S\ref{sec:preconds} we use multigrid.
We apply multiplicative V-cycle multigrid, in each
cycle applying one pre- and one post Gauss-Seidel smoother with reversed ordering of the unknowns.

In view of obtaining a linear complexity algorithm, at level $k$ we restrict smoothing to the vertices of generation $k$ as well as their godparents, cf.~\cite{Wu2017a}.
For $\tria \in \bbT$ we consider $W_{\tria}$, the space of continuous piecewise linears w.r.t.~$\tria$, zero on $\partial \Omega$, now equipped with the single-scale basis $\Phi_{\tria}$.   Set  $L=L(\tria):=\max_{T \in \tria} \gen(T)$, and define  the sequence
\[
\tria_\bot = \tria_0 \prec\tria_1 \prec \cdots \prec \tria_L=\tria \subset \bbT
\]
where $\tria_{k-1}$ is constructed from $\tria_k$ by removing all vertices $\nu \in N_{\tria_k}$ for which $\gen(\nu)=k$.
For $1 \leq k \leq L$, let $M_k$ be the set of new vertices and their godparents, i.e., $M_k := \bigcup_{\nu \in N_{\tria_k} \setminus N_{\tria_{k-1}}}\{ \nu\} \cup \gp(\nu)$, and let $M_{k,0} := M_k \setminus \partial \Omega$ be the vertices not on the boundary.
We consider the multilevel decomposition, cf.~\cite{Wu2017a},
\begin{equation}\label{eq:multilevel}
W_{\tria_L} = W_{\tria_0} + \sum_{k=1}^{L} \sum_{\nu \in M_{k,0}} \spann \phi_{k, \nu}, \quad \text{where}\quad \phi_{k, \nu} := \phi_{\tria_k, \nu}.
\end{equation}


For $1 \leq k \leq L$, let ${\bf P}_k$ be the \emph{prolongation matrix}, i.e., the matrix representation of the embedding $W_{\tria_{k-1}} \to W_{\tria_k}$, and enumerate the vertices $M_{k,0}$ as $(\nu_k^i)_{i =1}^{n_k}$.
Algorithm~\ref{alg:multigrid} details a (non-recursive) implementation of a single multiplicative V-cycle for the multilevel decomposition \eqref{eq:multilevel} using Gauss-Seidel smoothing.
We assume the availability of an efficient coarse-grid solver; in our application, a direct solve suffices.
For linear complexity, we use in-place vector updates restricted to non-zeros.

Note that this multigrid method is given in terms of the single-scale basis $\Phi_\tria$;  it can be transformed to the hierarchical basis  $\Psi_\tria$ similarly to~\eqref{eqn:hb2ss}. Multiple V-cycles are done by setting $u_0 := 0$ and iterating $u_k := \text{MG}(A, f - A u_{k-1})$.

\begin{algorithm}[H]
\caption{Single multiplicative V-cycle multigrid $\text{MG}(A, f)$.}
\label{alg:multigrid}
\KwData{
Some $f \in W_{\tria}'$ and a linear operator $A \colon W_{\tria} \to W_{\tria}'$. }
\KwResult{$u = {\bf u}^\top \Phi_\tria \in W_\tria$, the result of a single V-cycle applied to $f$.}
${\bf r} := f(\Phi_\tria)$\;  
\For{$L \geq k \geq 1$}{
\For{$\nu = \nu_k^1, \ldots, \nu_k^{n_k}$} {
    $r_{k, \nu} := {\bf r}_{\nu}$\;
    $e_{k, \nu} := r_{k, \nu} / (A\phi_{k, \nu})(\phi_{k, \nu})$\;
    ${\bf r} := {\bf r} - e_{k,\nu} (A\phi_{k, \nu})(\Phi_{\tria_k})$\;
}
${\bf r} := {\bf P}_k^\top {\bf r}$\;
}
~\\
Solve $ (A \Phi_{\tria_0})(\Phi_{\tria_0}){\bf u}= {\bf r}$\;
~\\
\For{$1 \leq k \leq L$}{
${\bf u} := {\bf P}_k {\bf u}$\;
\For{$\nu = \nu_k^{n_k}, \ldots, \nu_k^1$} {
    ${\bf u}_{\nu} := {\bf u}_{\nu} + e_{k, \nu}$\; 
${\bf u}_{\nu} := {\bf u}_{\nu} + (r_{k, \nu} -  (A \phi_{k, \nu})( {\bf u}^\top \Phi_{\tria_k}))/(A\phi_{k,\nu})(\phi_{k,\nu})$\;
}
}
\end{algorithm}


\subsection{Double-trees and tensor-product operators}
For every node in a double-tree ${\bf \Lambda} \subset \vee^0 \times \vee^1$,
we store a reference to the underlying pair of nodes in their mother trees.
This allows growing double-trees intuitively, and allows comparing different double-trees over the same pair of mother trees.
C++ templates allow us to re-use much of the tree code without runtime performance loss.

In \S\ref{Sappl-of-tensors} we saw how to apply a tensor-product operator. For this, we first construct the double-trees
${\bf \Sigma}$ and ${\bf \Theta}$; construction of ${\bf \Sigma}$ is illustrated
in Algorithm~\ref{alg:sigma}. 
Evaluation of the operator then reduces to the four simple steps of Algorithm~\ref{alg:tensor}.
\begin{algorithm}\label{alg:sigma}
\KwData{$\breve{\bm{\Lambda}} \subset \breve{\vee}^0 \times \breve{\vee}^1$, $\bm{\Lambda} \subset \vee^0 \times \vee^1$}
\KwResult{${\bf \Sigma}$ for application of Theorem~\ref{thm1} with $\breve{\bf \Lambda}$ and ${\bf \Lambda}$.}
${\bf \Sigma} := P_0 \bm{\Lambda} \times \{ \nu \in P_1 \breve{\bm{\Lambda}} : |\nu| = 0\}$\;
\For{$\lambda \in {\bm \Sigma}.\mathtt{project}(0)$}{
\For{$T \in \phi_\lambda.\mathtt{support}$} {
    \For{$\mu \in T.\mathtt{functions}(\breve{\vee}^0_{|\lambda|})$} {
        ${\bf \Sigma}.\mathtt{fiber}(1, \lambda).\mathtt{union}(\breve{{\bf \Lambda}}.\mathtt{fiber}(1, \mu))$\;
    }
}
}
\caption{Function $\mathtt{GenerateSigma}({\bf \breve{\Lambda}}, {\bf \Lambda})$. }
\end{algorithm}

\begin{algorithm}\label{alg:tensor}
     \KwData{$\bm{\Lambda} \subset \vee^0 \times \vee^1$, $\breve{\bm{\Lambda}} \subset \breve{\vee}^0 \times \breve{\vee}^1$, ${\bf c} \in \ell_2({\bf \Lambda})$, ${\bf d} \in \ell_2({\breve{\bf \Lambda}})$.}
${\bf \Sigma} := \mathtt{GenerateSigma}(\breve{\bf \Lambda}, {\bf \Lambda})$\;
${\bf \Theta} := \mathtt{GenerateTheta}(\breve{\bf \Lambda}, {\bf \Lambda})$\;
${\bf s} := {\bf 0} \in \ell_2(\bm \Sigma)$\;
${\bf t} := {\bf 0} \in \ell_2(\bm \Theta)$\;
${\bf l} := {\bf 0} \in \ell_2(\breve{\bm \Lambda})$\;
\BlankLine
\lFor{$\lambda \in {\bf s}.\mathtt{project}(0)$}{%
$\mathtt{eval}(A_1)({\bf s}.\mathtt{fiber}(1, \lambda), {\bf c}.\mathtt{fiber}(1,\lambda))$}
\lFor{$\mu \in {\bf l}.\mathtt{project}(1)$}{%
$\mathtt{evallow}(A_0)({\bf l}.\mathtt{fiber}(0, \mu), {\bf s}.\mathtt{fiber}(0,\mu))$}
\lFor{$\mu \in {\bf t}.\mathtt{project}(1)$}{%
$\mathtt{evalupp}(A_0)({\bf t}.\mathtt{fiber}(0, \mu), {\bf c}.\mathtt{fiber}(0,\mu))$}
\lFor{$\lambda \in {\bf d}.\mathtt{project}(0)$}{%
$\mathtt{eval}(A_1)({\bf d}.\mathtt{fiber}(1, \lambda), {\bf t}.\mathtt{fiber}(1,\lambda))$}
\BlankLine
${\bf d} := {\bf d} + {\bf l}$\;
\caption{Algorithm to evaluate ${\bf d} = R_{\breve {\bm \Lambda}}({\bm A}_0 \otimes {\bm A}_1) I_{\bm \Lambda} {\bf c}$.}
\end{algorithm}

\subsubsection{Memory optimizations}\label{sec:memopts}
As the memory consumption of a double-tree is significant, at around 280 bytes per node, we want to have as few double-trees in memory as possible.
By storing the nodes of ${\bf \Lambda}$ in a persistent container, every node is uniquely identified with its index in the container.
This induces a mapping $\R^{\# {\bf \Lambda}} \leftrightarrow \ell_2({\bf \Lambda})$ and allows us to overlay multiple vectors on the same underlying double-tree in a memory-friendly way.

The ${\bf \Sigma}$ generated by Algorithm~\ref{alg:sigma} for the application of a tensor-product operator can play the role of ${\bf \Theta}$ necessary for the application of its transpose operator (and vice versa). This allows tensor-product operators and their transposes to share the double-trees ${\bf \Sigma}$ and ${\bf \Theta}$.

With these insights, our implementation of the heat equation has at most 5 different double-trees in memory.

\subsection{The adaptive loop}\label{sec:algo-impl}
%
In the refine step of the adaptive loop, we first mark a set $J$ of nodes in ${\bf \Lambda}^{\udelta} \setminus {\bf \Lambda}^\delta$ using D\"orfler marking (possible in linear complexity; cf.~\cite{Pfeiler2019}). We then refine
${\bf \Lambda}^{\delta}$ to the smallest double-tree containing $J$:
\begin{enumerate}
\item mark all nodes in ${\bf \Lambda}^{\udelta}$ that are also present in ${\bf \Lambda}^{\delta}$ ((ii) in Fig.~\ref{fig:adaptrefine});
\item traverse ${\bf \Lambda}^{\udelta}$ from every node in $J$, top-down in level-wise order, until hitting a previously marked node. Mark all nodes along the way ((iii--iv) in Fig.~\ref{fig:adaptrefine});
\item $\mathtt{union}$ the marked nodes of ${\bf \Lambda}^{\udelta}$ into ${\bf \Lambda}^{\delta}$ ((v) in Fig.~\ref{fig:adaptrefine}).
\end{enumerate}
As  $\# {\bf \Lambda}^{\udelta} \lesssim \# {\bf \Lambda}^{\delta}$ and we visit every node of ${\bf \Lambda}^{\udelta}$ at most twice, the traversal is linear in $\#{\bf \Lambda}^\delta$. See also Figure~\ref{fig:adaptrefine}.
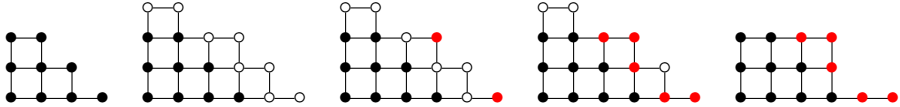
\begin{figure}[ht]
\centering
\begin{tikzpicture}[scale=0.4]
\begin{scope}
    \draw (0,0) -- (3,0); \draw (0,1) -- (2,1); \draw (0,2) -- (1,2);
    \draw (0,0) -- (0,2); \draw (1,0) -- (1,2); \draw (2,0) -- (2,1);
    \foreach \xy in {(0,0), (1,0), (0,1), (1,1), (2,0), (3,0), (2,1), (0,2), (1,2)} {
    \node at \xy{\textbullet};
    }
\end{scope}
\begin{scope}[shift={(4.5,0)}]
    \draw (0,0) -- (5,0); \draw (0,1) -- (4,1); \draw (0,2) -- (3,2); \draw (0,3) -- (1,3);
    \draw (0,0) -- (0,3); \draw (1,0) -- (1,3); \draw (2,0) -- (2,2); \draw (3,0) -- (3,2); \draw (4,0) -- (4,1);
    \foreach \xy in {(0,0), (1,0), (0,1), (1,1), (2,0), (3,0), (2,1), (0,2), (1,2)} {
    \node at \xy{\textbullet};
    }
    \foreach \xy in {(0,3), (1,3), (2,2), (3,2), (3,1), (4,1), (4,0), (5,0)} {
    \node[white] at \xy{\textbullet};
    \node at \xy{\textopenbullet};
    }
\end{scope}
\begin{scope}[shift={(11,0)}]
    \draw (0,0) -- (5,0); \draw (0,1) -- (4,1); \draw (0,2) -- (3,2); \draw (0,3) -- (1,3);
    \draw (0,0) -- (0,3); \draw (1,0) -- (1,3); \draw (2,0) -- (2,2); \draw (3,0) -- (3,2); \draw (4,0) -- (4,1);
    \foreach \xy in {(0,0), (1,0), (0,1), (1,1), (2,0), (3,0), (2,1), (0,2), (1,2)} {
    \node at \xy{\textbullet};
    }
    \foreach \xy in {(0,3), (1,3), (2,2), (3,1), (4,1), (4,0)} {
    \node[white] at \xy{\textbullet};
    \node at \xy{\textopenbullet};
    }
    \foreach \xy in {(3,2), (5,0)} {
    \node[red] at \xy{\textbullet};
    }
\end{scope}
\begin{scope}[shift={(17.5,0)}]
    \draw (0,0) -- (5,0); \draw (0,1) -- (4,1); \draw (0,2) -- (3,2); \draw (0,3) -- (1,3);
    \draw (0,0) -- (0,3); \draw (1,0) -- (1,3); \draw (2,0) -- (2,2); \draw (3,0) -- (3,2); \draw (4,0) -- (4,1);
    \foreach \xy in {(0,0), (1,0), (0,1), (1,1), (2,0), (3,0), (2,1), (0,2), (1,2)} {
    \node at \xy{\textbullet};
    }
    \foreach \xy in {(0,3), (1,3), (4,1)} {
    \node[white] at \xy{\textbullet};
    \node at \xy{\textopenbullet};
    }
    \foreach \xy in {(2,2), (3,1), (3,2), (4,0), (5,0)} {
    \node[red] at \xy{\textbullet};
    }
\end{scope}
\begin{scope}[shift={(24,0)}]
    \draw (0,0) -- (5,0); \draw (0,1) -- (3,1); \draw (0,2) -- (3,2);
    \draw (0,0) -- (0,2); \draw (1,0) -- (1,2); \draw (2,0) -- (2,2); \draw (3,0) -- (3,2);
    \foreach \xy in {(0,0), (1,0), (0,1), (1,1), (2,0), (3,0), (2,1), (0,2), (1,2)} {
    \node at \xy{\textbullet};
    }
    \foreach \xy in {(2,2), (3,1), (3,2), (4,0), (5,0)} {
    \node[red] at \xy{\textbullet};
    }
\end{scope}
\end{tikzpicture}
\vspace{-1em}
\caption{Adaptive refinement of a double-tree with underlying \emph{unary} mother trees. Left to right: (i) ${\bf \Lambda}^\delta$; (ii) ${\bf \Lambda}^{\udelta}$ with nodes in ${\bf \Lambda}^{\udelta} \setminus {\bf \Lambda}^\delta$ in white;  (iii) nodes in $J$ marked in red; (iv) nodes marked in the top-down traversal; (v) refined ${\bf \Lambda}^\delta$.}
\label{fig:adaptrefine}
\end{figure}

\section{Numerical experiments}
\label{sec:numerical}
We consider the heat equation~\eqref{eqn:heat}, and assess our implementation of the adaptive Algorithm~\ref{alg:adaptive} for its numerical solution.
Complementing the convergence results gathered in~\cite[\S7]{Stevenson2020b}, here we provide results on the practical performance of the adaptive loop.
Results were gathered on a multi-core 2.2 GHz machine, provided by the Dutch national e-infrastructure with the support of SURF Cooperative.

\subsection{The adaptive loop}\label{sec:adaptloop}
We summarize the main results from~\cite[\S7]{Stevenson2020b}. We run Algorithm~\ref{alg:adaptive} with $\theta = \tfrac{1}{2}$ and $\xi = \tfrac{1}{2}$.
We consider four problems.

In the \emph{smooth problem},
we select $\Omega := [0,1]^2$ and prescribe the solution
\[
u(t,x,y) := (1+t^2) x(1-x) y(1-y).
\]

In the \emph{moving peak} problem, we again select $\Omega := [0,1]^2$ with prescribed solution
\[
u(t,x,y) := x(1-x)y(1-y) \exp(-100 [(x-t)^2 + (y-t)^2]);
\]
here, $u$ is essentially zero outside a small strip along the diagonal $(0,0,0)$ to $(1,1,1)$.

In the \emph{cylinder} problem, we select $\Omega := [-1,1]^2 \setminus [-1,0]^2$ with data
\[
u_0 \equiv 0, \quad \text{and} \quad g(t,x,y) := t \cdot \bbone_{\{x^2 + y^2 < 1/4\}}.
\]
The solution has singularities in the re-entrant corner and along the wall of the cylinder $\{(t,x,y): x^2 + y^2 = 1/4\}$.

In the \emph{singular} problem, we select $\Omega := [-1,1]^2 \setminus [-1,0]^2$ with data $u_0 \equiv 1$ and $g \equiv 0$; the solution then has singularities along $\{0\} \times \partial \Omega$ and $I \times \{(0,0)\}$.

\subsubsection{Convergence}
To estimate the error $\|u - \hat u^\delta\|_X$, we measure the residual error estimator $\|{\bf r}^\delta(\hat u^\delta)\|$ from~\eqref{eqn:residual}; see also Lemma~\ref{lem:inexact}.
In the left pane of Figure~\ref{fig:convergence}, for the first three problems, we observe a convergence rate of $1/2$, which is the best that can be expected from our family of trial spaces $(X^\delta)_{\delta \in \Delta}$. For the singular problem, the reduced rate $0.4$ is found; it is unknown if a better rate can be expected.
\begin{figure}
\centering
\includegraphics[width=0.49\linewidth]{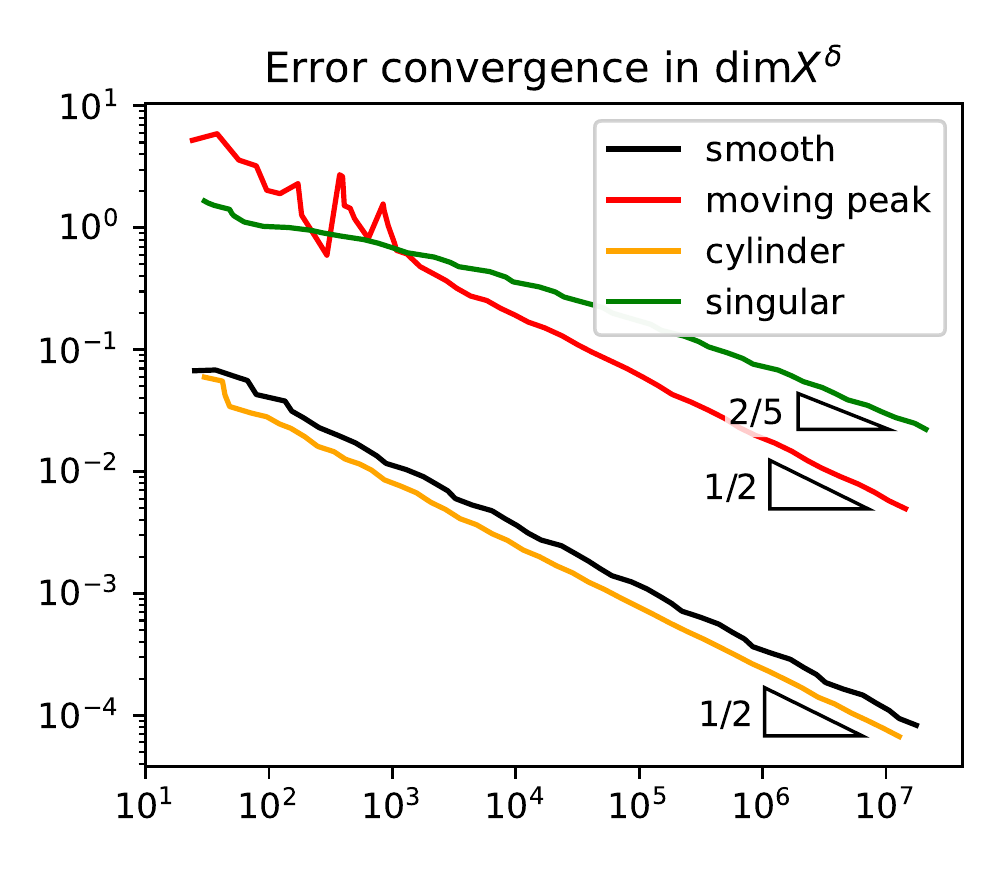}
\includegraphics[width=0.49\linewidth]{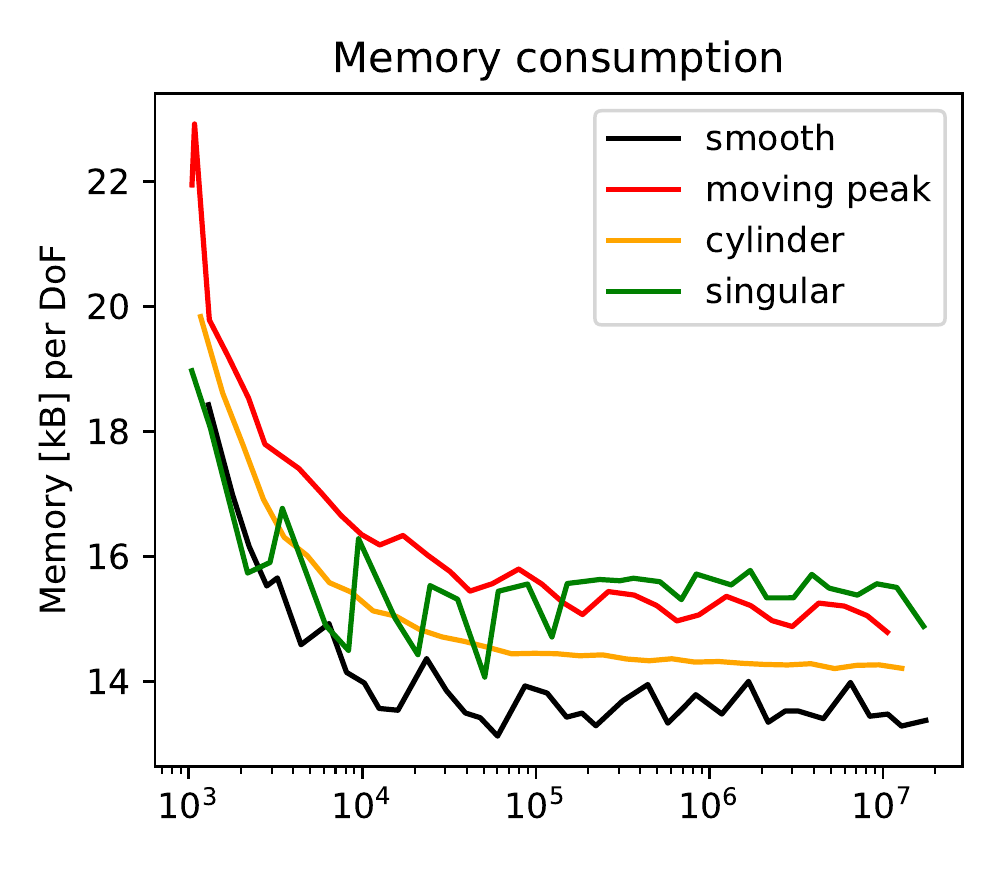}
\vspace{-1em}
\caption{Error convergence and peak memory usage of the adaptive loop for the four problems of~\S\ref{sec:adaptloop}.}
\label{fig:convergence}
\end{figure}

\subsubsection{Memory}
The right pane of Figure~\ref{fig:convergence} shows the peak memory consumption after every iteration of the adaptive algorithm.
We see that the peak memory is linear in $\dim X^\delta$, stabilizing to around 15kB per degree of freedom.
This is relatively high due to our implementation based on double-trees.
In fact, the double-trees together make up around 85\% of the total memory. For the singular problem, the largest double-tree ${\bf \Lambda}^\udelta_Y$ occupies around 40\% of the total memory.

\subsection{Linearity of operations}
The majority of our runtime is spent in the application of bilinear forms. In this section, we measure the application times to assert their linear complexity.

\subsubsection{In time}
We select three sequences $\{\Lambda_U\}$, $\{\Lambda_L\}$, $\{\Lambda_R\}$ of trees in $\vee_\Sigma$,
one uniformly refined and two graded towards the left and right respectively. For each such tree $\Lambda \subset \vee_\Sigma$, we define a corresponding tree $\breve \Lambda := \{ \mu \in \vee_\Xi : \exists \lambda \in \Lambda, |\lambda| = |\mu|,  |\supp \xi_\mu \cap \supp \sigma_\lambda| > 0\} \subset \vee_\Xi$.

We select the bilinear forms $M_t$ and $D_t$ from~\eqref{eqn:bilforms}, and run the algorithms from~\S\ref{sec:eval_time}. We see in Figure~\ref{fig:time-adapt} that the runtime per degree of freedom stabilizes to $10^{-3}$ ms, essentially independent of the bilinear form and the trees. We suspect the increase until $10^7$ degrees of freedom has to do with cache locality.
\begin{figure}
\centering
\includegraphics[width=\linewidth]{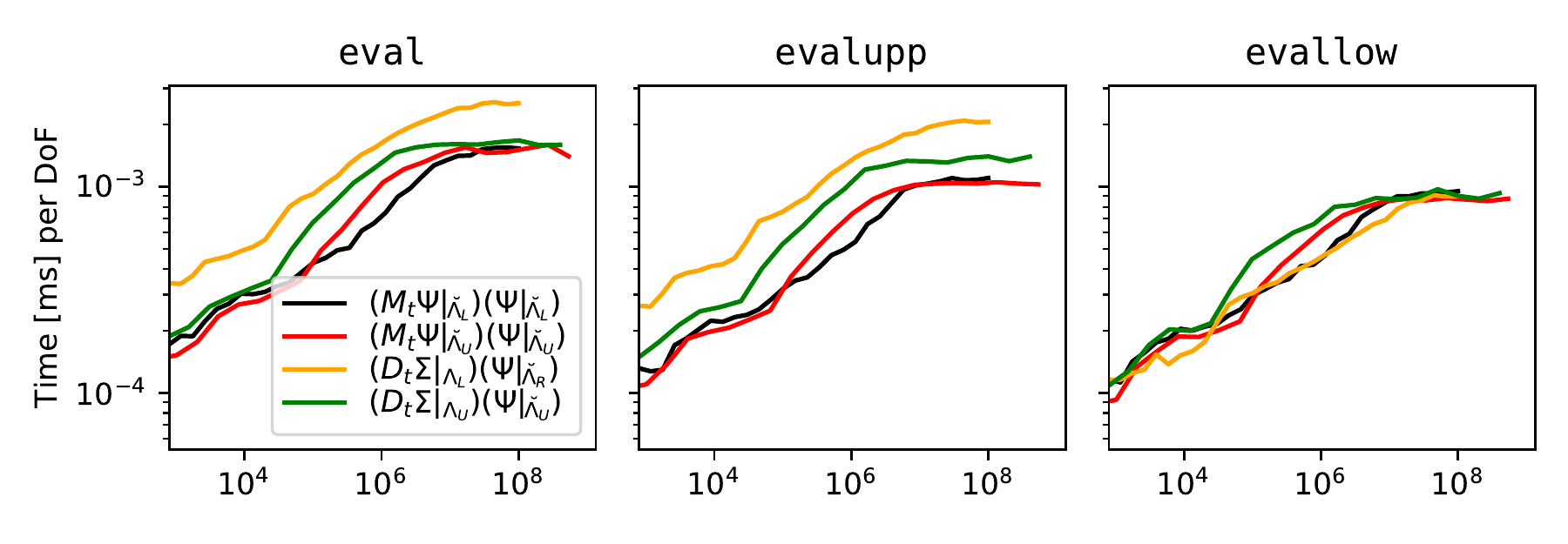}
\vspace{-2em}
\caption{Time (in ms) per DoF of bilinear form evaluations in time.}
\label{fig:time-adapt}
\end{figure}

\subsubsection{In space}
On the L-shaped domain $\Omega := [-1,1]^2 \setminus [-1,0]^2$, we select two sequences of hierarchical basis trees, one uniformly refined and the other refined by a standard adaptive loop on $-\Delta u = 1$, $u|_{\partial \Omega} = 0$.

For a hierarchical basis tree $\Psi_\tria = \{ \psi_\nu : \nu \in N_{\tria, 0}\}$, we denote the stiffness matrix $\langle \nabla \Psi_\tria, \nabla \Psi_\tria \rangle_{L_2(\Omega)}$
as ${\bf A}_\tria$.
We measure the runtime of the conversion from vertex tree $N_\tria$ to triangulation $\tria$ (cf.~\S\ref{sec:triangview}), the application time of ${\bf A}_\tria$ through~\eqref{eqn:hb2ss}, and that of multigrid on ${\bf A}_\tria$
(with 1 and 3 V-cycles) through Algorithm~\ref{alg:multigrid}.
Figure~\ref{fig:space-adapt} confirms that the relative runtime of every operation is essentially independent of the refinement strategy. Interesting is again the increase until $10^5$ degrees of freedom.
\begin{figure}
\centering
\includegraphics[width=0.5\linewidth]{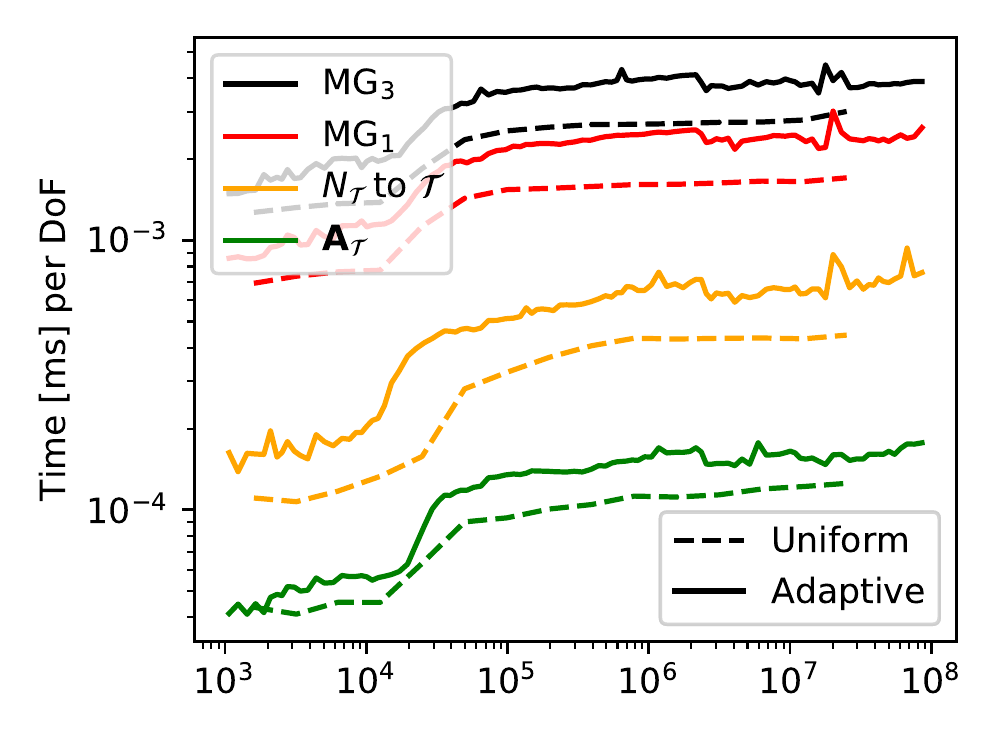}
\vspace{-1em}
\caption{Time (in ms) per DoF of important operations in space, for uniform and adaptive refinements.}
\label{fig:space-adapt}
\end{figure}

\subsubsection{In space-time}
Solving~\eqref{eqn:discr-schur} using PCG requires the application of the four linear operators ${E_Y^\udelta} B E_X^\delta$, ${E_X^\delta}' \gamma_0' \gamma_0 E_X^\delta$, $K_X^\delta$, and $K_Y^\udelta$.
For the first two, Corollary~\ref{cor:spacetime-linear} asserts that their application time is of linear complexity, while for the preconditioners $K_X^\delta$ and $K_Y^\udelta$, this follows from the block-diagonal structure of their matrix representation.

We run the adaptive algorithm on the four problems of~\S\ref{sec:adaptloop}. Figure~\ref{fig:spacetime-adapt} shows that the application time of the aforementioned operators is essentially independent of the problem, even though the underlying double-trees are vastly different. We again see an increase in relative runtime until $10^6$ degrees of freedom.

\begin{figure}
\centering
\includegraphics[width=\linewidth]{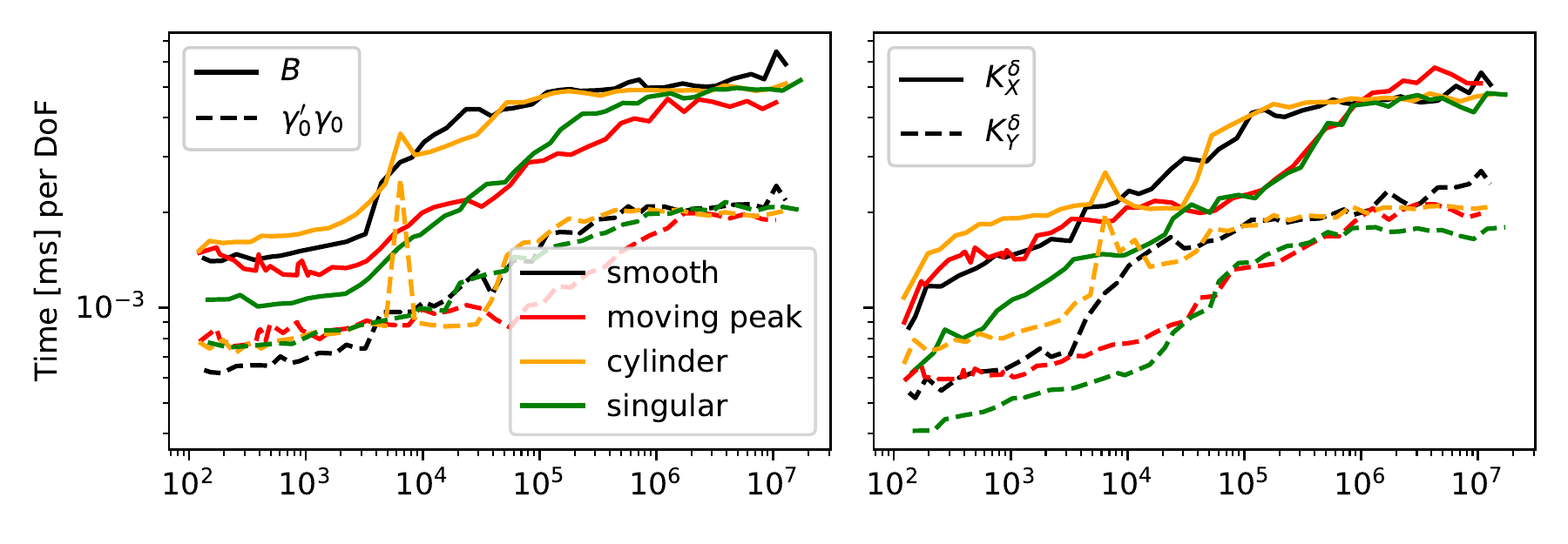}
\vspace{-2em}
\caption{Time (in ms) per DoF of the four bilinear forms applied in the solve step of the adaptive algorithm.}
\label{fig:spacetime-adapt}
\end{figure}

Figure~\ref{fig:spacetime-time} shows the runtimes of the solve, estimate, mark and refine steps of the adaptive loop. We confirm that each step is of linear complexity, and that  the total runtime is governed by the solve and estimate steps.

\begin{figure}
\centering
\includegraphics[width=\linewidth]{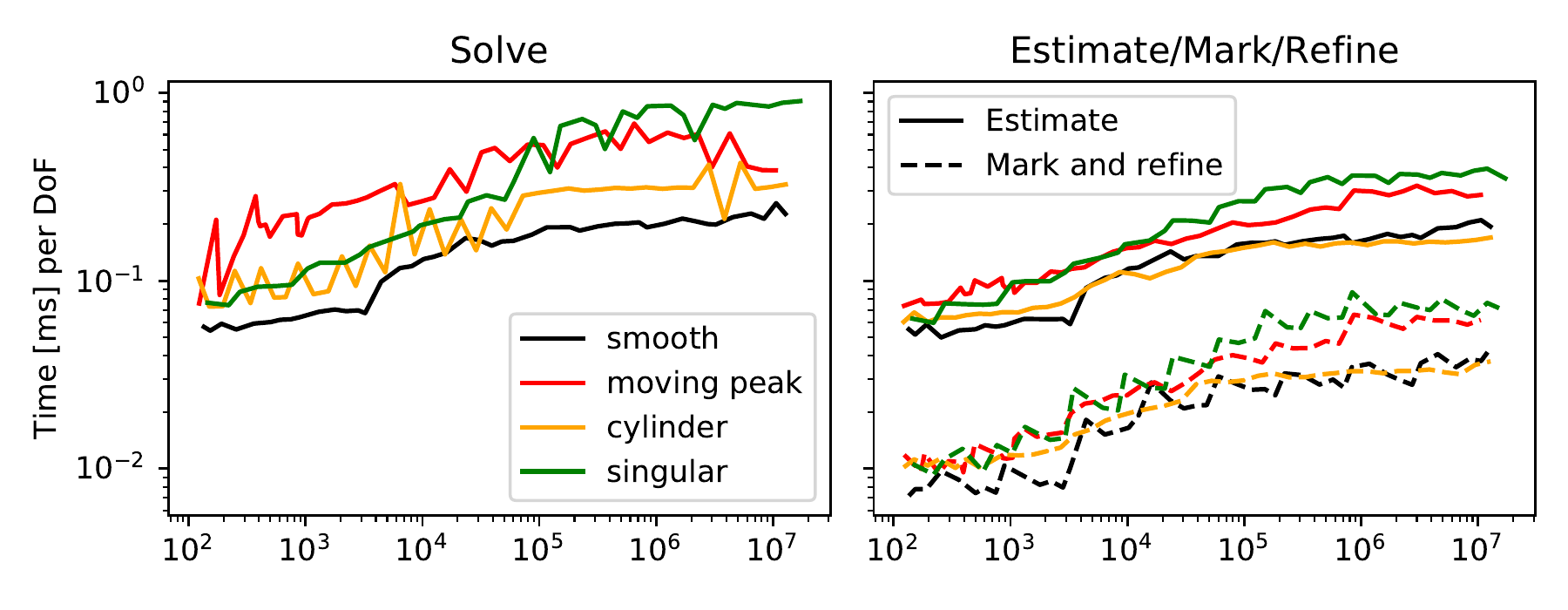}
\vspace{-2em}
\caption{Time (in ms) per DoF of the steps in the adaptive loop.}
\label{fig:spacetime-time}
\end{figure}

\subsection{Shared-memory parallelism}
Most of our execution time is spent applying the linear operators from Figure~\ref{fig:spacetime-adapt}.
We can obtain a significant speedup with multithreading.
In Algorithm~\ref{alg:tensor}, all fibers inside each of the four for-loops are disjoint, and we can easily parallelize each loop using OpenMP.



We run the parallel code on the smooth and singular problems.
The right pane of Figure~\ref{fig:spacetime-parallel} shows decent parallel performance for the singular problem, with 10$\times$ speedup at 16 cores.
The left pane however reveals a load balancing issue: as $u$ is smooth, the two fibers $({\bf \Lambda}^\delta_{0})_{1,\lambda}$ with $|\lambda| = 0$ contain the majority of the degrees of freedom. This results in poor parallel efficiency for the first and fourth loop in Algorithm~\ref{alg:tensor}.

\begin{figure}
\centering
\includegraphics[width=\linewidth]{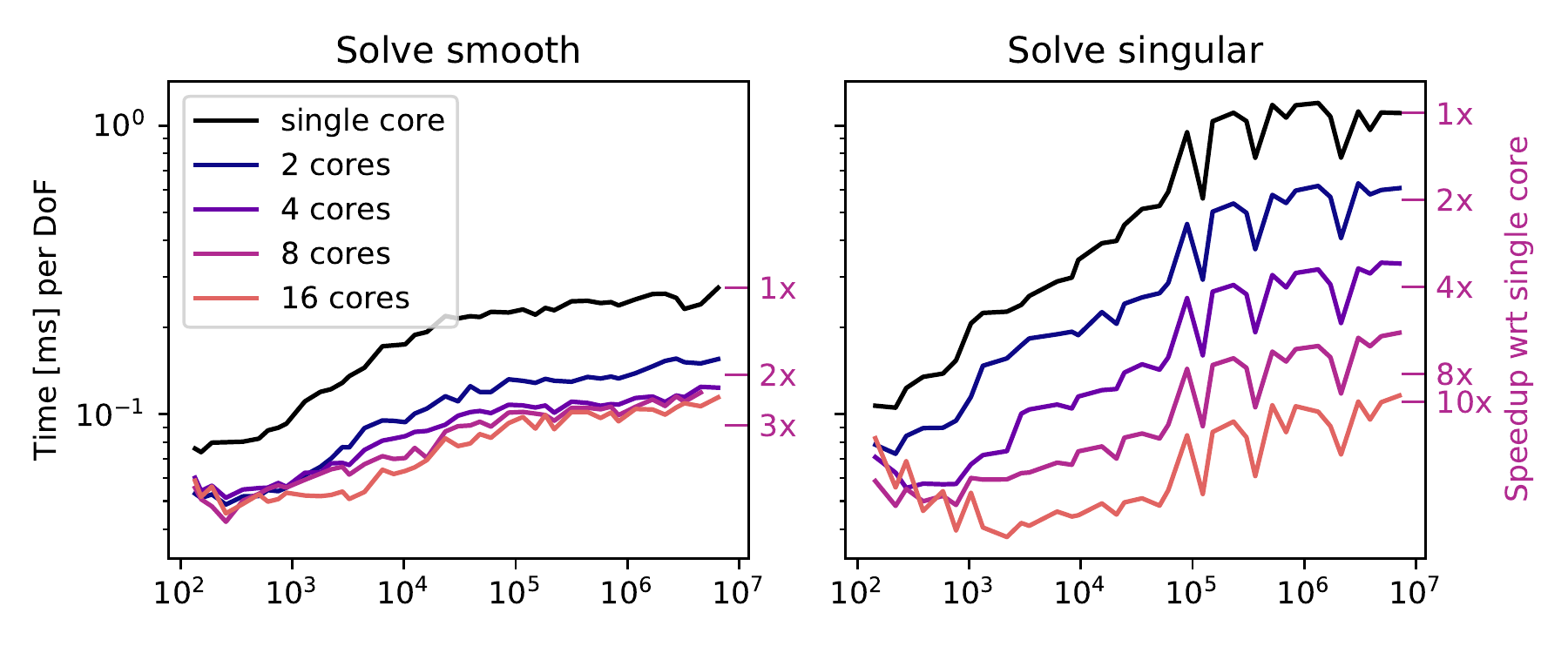}
\vspace{-2em}
\caption{Speedup and time (in ms) per DoF of the solve step in the adaptive loop, for different number of parallel processors.}
\label{fig:spacetime-parallel}
\end{figure}

\section{Conclusion}
We discussed an implementation of an adaptive solver for a space-time variational formulation of parabolic evolution equations where every step is of linear complexity.

We constructed a family of trial spaces spanned by tensor-products of wavelets in time and hierarchical basis functions in space. The resulting adaptive loop is able to resolve singularities locally in space and time, and we proved its $r$-linear convergence.

After imposing a \emph{double-tree} constraint on the index set of the trial spaces,
we \newer{devised} an abstract algorithm that is able to apply the system matrices in linear complexity.
We achieve this complexity in practice by a \emph{tree-based} implementation.
The numerical results show high performance of the adaptive loop as a whole.

\newer{\subsection*{Acknowledgements} The authors wish to thank their advisor Rob Stevenson for the many helpful comments.}


\begin{thebibliography}{SvVW21}

\bibitem[And13]{Andreev2013}
Roman Andreev.
\newblock {Stability of sparse space-time finite element discretizations of
linear parabolic evolution equations}.
\newblock {\em IMA Journal of Numerical Analysis}, 33(1):242--260, 1 2013.

\bibitem[BZ96]{Balder1996}
Robert Balder and Christoph Zenger.
\newblock {The Solution of Multidimensional Real Helmholtz Equations on Sparse
Grids}.
\newblock {\em SIAM Journal on Scientific Computing}, 17(3):631--646, 5 1996.

\bibitem[DKS16]{Diening2016}
Lars Diening, Christian Kreuzer, and Rob Stevenson.
\newblock {Instance Optimality of the Adaptive Maximum Strategy}.
\newblock {\em Foundations of Computational Mathematics}, 16(1):33--68, 2 2016.


\bibitem[FK21]{Fuhrer2019}
Thomas F{\"{u}}hrer and Michael Karkulik.
\newblock {Space–time least-squares finite elements for parabolic equations}.
\newblock {\em Computers {\&} Mathematics with Applications}, 92:27--36, 6
  2021.


\bibitem[GK11]{Gunzburger2011}
Max~D. Gunzburger and Angela Kunoth.
\newblock {Space-Time Adaptive Wavelet Methods for Optimal Control Problems
Constrained by Parabolic Evolution Equations}.
\newblock {\em SIAM Journal on Control and Optimization}, 49(3):1150--1170, 1
2011.

\bibitem[Hac12]{Hackbusch2012}
Wolfgang Hackbusch.
\newblock {\em {Tensor Spaces and Numerical Tensor Calculus}}, volume~42 of
{\em Springer Series in Computational Mathematics}.
\newblock Springer Berlin Heidelberg, Berlin, Heidelberg, 2012.

\bibitem[KS14]{Kestler2014a}
Sebastian Kestler and Rob Stevenson.
\newblock {Fast evaluation of system matrices w.r.t. multi-tree collections of
tensor product refinable basis functions}.
\newblock {\em Journal of Computational and Applied Mathematics}, 260:103--116,
4 2014.

\bibitem[KSU15]{Kestler2015}
Sebastian Kestler, Kristina Steih, and Karsten Urban.
\newblock {An efficient space-time adaptive wavelet Galerkin method for
time-periodic parabolic partial differential equations}.
\newblock {\em Mathematics of Computation}, 85(299):1309--1333, 8 2015.

\bibitem[NS19]{Neumuller2019}
Martin Neum{\"{u}}ller and Iain Smears.
\newblock {Time-parallel iterative solvers for parabolic evolution equations}.
\newblock {\em SIAM Journal on Scientific Computing}, 41(1):C28--C51, 1 2019.

\bibitem[OR00]{Olshanskii2000}
Maxim~A. Olshanskii and Arnold Reusken.
\newblock {On the Convergence of a Multigrid Method for Linear
Reaction-Diffusion Problems}.
\newblock {\em Computing}, 65(3):193--202, 12 2000.

\bibitem[Pab15]{Pabel}
Roland Pabel.
\newblock {\em {Adaptive Wavelet Methods for Variational Formulations of
Nonlinear Elliptic PDEs on Tensor-Product Domains}}.
\newblock PhD thesis, Universit{\"{a}}t zu K{\"{o}}ln, 2015.

\bibitem[Pfl10]{Pfluger2010}
Dirk Pfl{\"{u}}ger.
\newblock {\em {Spatially Adaptive Sparse Grids for High-Dimensional
Problems}}.
\newblock PhD thesis, Institut f{\"{u}}r Informatik, Technische
Universit{\"{a}}t M{\"{u}}nchen, 2010.

\bibitem[PP20]{Pfeiler2019}
Carl-Martin Pfeiler and Dirk Praetorius.
\newblock {D{\"{o}}rfler marking with minimal cardinality is a linear
complexity problem}.
\newblock {\em Mathematics of Computation}, 89(326):2735--2752, 6 2020.

\bibitem[Rek18]{Rekatsinas2018}
Nikolaos Rekatsinas.
\newblock {\em {Optimal adaptive wavelet methods for solving first order system
least squares}}.
\newblock PhD thesis, University of Amsterdam, 2018.

\bibitem[RS18]{Rekatsinas2018a}
Nikolaos Rekatsinas and Rob Stevenson.
\newblock {A quadratic finite element wavelet Riesz basis}.
\newblock {\em International Journal of Wavelets, Multiresolution and
Information Processing}, 16(04):1850033, 7 2018.

\bibitem[SS09]{Schwab2009}
Christoph Schwab and Rob Stevenson.
\newblock {Space-time adaptive wavelet methods for parabolic evolution
problems}.
\newblock {\em Mathematics of Computation}, 78(267):1293--1318, 9 2009.

\bibitem[Ste98]{Stevenson1998}
Rob Stevenson.
\newblock {Stable three-point wavelet bases on general meshes}.
\newblock {\em Numerische Mathematik}, 80(1):131--158, 7 1998.

\bibitem[Ste08]{Stevenson2008TheBisection}
Rob Stevenson.
\newblock {The completion of locally refined simplicial partitions created by
bisection}.
\newblock {\em Mathematics of Computation}, 77(261):227--241, 1 2008.

\bibitem[SvVW21]{Stevenson2020b}
Rob Stevenson, Raymond van Veneti{\"{e}}, and Jan Westerdiep.
\newblock {A wavelet-in-time, finite element-in-space adaptive method for
parabolic evolution equations}.
\newblock 1 2021.
\newblock \href{http://arxiv.org/abs/2101.03956}{\path{arXiv:2101.03956}}.

\bibitem[SW20]{Stevenson2020a}
Rob Stevenson and Jan Westerdiep.
\newblock {Stability of Galerkin discretizations of a mixed space–time
variational formulation of parabolic evolution equations}.
\newblock {\em IMA Journal of Numerical Analysis}, 2 2020.

\bibitem[SZ20]{Steinbach2020a}
Olaf Steinbach and Marco Zank.
\newblock {Coercive space-time finite element methods for initial boundary
  value problems}.
\newblock {\em ETNA - Electronic Transactions on Numerical Analysis},
  52:154--194, 2020.


\bibitem[vVW20]{VanVenetie2020a}
Raymond van Veneti{\"{e}} and Jan Westerdiep.
\newblock {A parallel algorithm for solving linear parabolic evolution
equations}.
\newblock 2020.
\newblock \href{http://arxiv.org/abs/2009.08875}{\path{arXiv:2009.08875}}.

\bibitem[vVW21]{VanVenetie2021}
Raymond van Veneti{\"{e}} and Jan Westerdiep.
\newblock {Implementation of: Efficient space-time adaptivity for parabolic
evolution equations using wavelets in time and finite elements in space},
2021.
\newblock \href {http://dx.doi.org/10.5281/zenodo.4697250}
{\path{doi:10.5281/zenodo.4697250}}.

\bibitem[WZ17]{Wu2017a}
Jinbiao Wu and Hui Zheng.
\newblock {Uniform convergence of multigrid methods for adaptive meshes}.
\newblock {\em Applied Numerical Mathematics}, 113:109--123, 3 2017.

\end{thebibliography}

\appendix
\section{Proofs of Theorems in \S\ref{sec:lincomp}}\label{app:proofs}

\thmeval*
\begin{proof}
By locality of the collections $\breve{\Phi}$ and $\breve{\Psi}$, and sparsity of the matrices $\breve{\mathfrak{p}}_\ell$ and $\breve{\mathfrak{q}}_\ell$, we see that $\# \breve{\underline{\Pi}} \lesssim \#\breve{\Pi}_B +\# \breve{\Lambda}_\ell \lesssim \# \Lambda_\ell+\# \breve{\Lambda}_\ell$. So after sufficiently many recursive calls, the current set  $\breve{\Pi}  \cup \breve{\Lambda}$ will be empty.
For use later, we note that similarly
$\# \underline{\Pi}  \lesssim \#\Pi_B +\#\Lambda_\ell \lesssim \#\breve{\Lambda}_\ell+\#\breve{\Pi}_B +\#\Lambda_\ell \lesssim \#\Lambda_\ell+\#\breve{\Lambda}_\ell$.

For $\breve{\Pi}  \cup \breve{\Lambda}= \emptyset$, the call produces nothing, which is correct.

Now let  $\breve{\Pi}  \cup \breve{\Lambda} \neq \emptyset$.
From $\Lambda$ being an $\ell$-tree, the definitions of $S(\cdot)$ and $\breve{\Pi}_A$, and the locality of $A$, one has
\[
\vec{e}|_{\breve{\Pi}_A}=(Au)(\breve{\Phi}|_{\breve{\Pi}_A})
=(A(\vec{d}^\top\Phi|_{\Pi}))(\breve{\Phi}|_{\breve{\Pi}_A}).
\]

By choice of $\underline{\Pi}$ we have
\[
\underline{u}:=\underline{\vec{d}}^\top\Phi|_{\underline{\Pi}}+\vec{c}|_{\Lambda_{\ell+1\uparrow}}^\top \Psi|_{\Lambda_{\ell+1\uparrow}} =(\vec{d}|_{\Pi_B})^\top \Phi|_{\Pi_B} +\vec{c}^\top \Psi|_{\Lambda}=u-(\vec{d}|_{\Pi_A})^\top \Phi|_{\Pi_A}.
\]

By induction the recursive call  yields
$\underline{\vec{e}}=(A\underline{u})(\breve{\Phi}|_{\breve{\underline{\Pi}}})$, and
$\underline{\vec{f}}=(A\underline{u})(\breve{\Psi}|_{\breve{\Lambda}_{\ell+1 \uparrow}})$.
From  $\breve{\Lambda}$ being an $\ell$-tree, the definitions of $\breve{S}(\cdot)$ and $\Pi_A$, and the locality of $A$, we have
\[
(Au)(\breve{\Psi}|_{\breve{\Lambda}_{\ell \uparrow}})=(A \underline{u})(\breve{\Psi}|_{\breve{\Lambda}_{\ell \uparrow}}),
\]
and so in particular $\vec{f}|_{\Lambda_{\ell+1,\uparrow}}=\underline{\vec{f}}$.

The definition of $\breve{\underline{\Pi}}$ shows that
\[
\breve{\Phi}|_{\breve{\Pi}_B}=(\breve{\mathfrak{p}}_{\ell} ^\top \breve{\Phi}|_{\underline{\breve{\Pi}}})|_{\breve{\Pi}_B},\quad \breve{\Psi}|_{\breve{\Lambda}_\ell}=(\mathfrak{\breve{q}}_\ell^\top \breve{\Phi}|_{\underline{\breve{\Pi}}})|_{\breve{\Lambda}_\ell}.
\]
We conclude that
\[
\vec{f}|_{\breve{\Lambda}_\ell}=(Au)(\breve{\Psi}|_{\breve{\Lambda}_\ell})=(A\underline{u})(\breve{\Psi}|_{\breve{\Lambda}_\ell})
=
\big(\mathfrak{\breve{q}}_\ell^\top \underline{\vec{e}}\big)|_{\breve{\Lambda}_\ell},
\]
and from $|\supp \phi_\lambda \cap \supp \breve{\phi}_\mu|=0$ for $(\lambda,\mu) \in \Pi_A \times\breve{\Pi}_B$, that
\[
\vec{e}|_{\breve{\Pi}_B}=  (Au)(\breve{\Phi}|_{\breve{\Pi}_B})=(A\underline{u})(\breve{\Phi}|_{\breve{\Pi}_B})
=   \big(\breve{\mathfrak{p}}_{\ell} ^\top \underline{\vec{e}}\big)|_{\breve{\Pi}_B}.
\]

From the assumptions on the collections $\Phi$, $\breve{\Phi}$, $\breve{\Psi}$, and $\Psi$, and their consequences on the sparsity of the matrices $\mathfrak{p}_{\ell} $, $\breve{\mathfrak{p}}_{\ell}$, $\mathfrak{q}_\ell$, and $\mathfrak{\breve{q}}_\ell$, one infers that the total cost of the evaluations of the statements in \texttt{eval} is ${\mathcal O}(\# \breve{\Pi}+\# \breve{\Lambda}_\ell+\# \Pi+\# \Lambda_\ell)$ plus the cost of the recursive call. Using
$ \# \breve{\underline{\Pi}} + \# \underline{\Pi} \lesssim \# \breve{\Lambda}_\ell+\# \Lambda_\ell$
and induction, we conclude the second statement of the theorem.
\end{proof}

\thmevalupp*
\begin{proof}By locality of the collections $\breve{\Phi}$ and $\breve{\Psi}$, and sparsity of the matrices $\breve{\mathfrak{p}}_\ell$ and $\breve{\mathfrak{q}}_\ell$, we see that
$\# \breve{\underline{\Pi}} \lesssim \#\breve{\Pi}_B +\# \breve{\Lambda}_\ell \lesssim \# \Lambda_\ell+\# \breve{\Lambda}_\ell$. So after sufficiently many recursive calls, the current set  $\breve{\Pi}  \cup \breve{\Lambda}$ will be empty.
Notice that
$\# \underline{\Pi}  \lesssim \#\Lambda_\ell$.

For $\breve{\Pi}  \cup \breve{\Lambda}= \emptyset$, the call produces nothing, which is correct.

Now let  $\breve{\Pi}  \cup \breve{\Lambda} \neq \emptyset$.
From $\Lambda$ being an $\ell$-tree, the definitions of $S(\cdot)$ and $\breve{\Pi}_A$, and the locality of $A$, one has
\[
\vec{e}|_{\breve{\Pi}_A}=(Au)(\breve{\Phi}|_{\breve{\Pi}_A})=(A(\vec{d}^\top\Phi|_{\Pi}))(\breve{\Phi}|_{\breve{\Pi}_A}).
\]

By definition of $\underline{\Pi}$ we have
\[
\underline{u}:=\underline{\vec{d}}^\top\Phi|_{\underline{\Pi}}+\vec{c}|_{\Lambda_{\ell+1\uparrow}}^\top \Psi|_{\Lambda_{\ell+1\uparrow}} =\vec{c}^\top \Psi|_{\Lambda}=u-\vec{d}^\top \Phi|_{\Pi}.
\]

By induction the recursive call  yields
$\underline{\vec{e}}=(A\underline{u})(\breve{\Phi}|_{\breve{\underline{\Pi}}})$, and
$\underline{\vec{f}}={\bf U}_{\breve{\Lambda}_{\ell+1 \uparrow} \times \Lambda_{\ell+1 \uparrow}} c|_{\Lambda_{\ell+1 \uparrow}}=\vec{f}|_{\breve{\Lambda}_{\ell+1 \uparrow}}$.

The definition of $\breve{\underline{\Pi}}$ shows that
\[
\breve{\Phi}|_{\breve{\Pi}_B}=(\breve{\mathfrak{p}}_{\ell} ^\top \breve{\Phi}|_{\underline{\breve{\Pi}}})|_{\breve{\Pi}_B},\quad \breve{\Psi}|_{\breve{\Lambda}_\ell}=(\mathfrak{\breve{q}}_\ell^\top \breve{\Phi}|_{\underline{\breve{\Pi}}})|_{\breve{\Lambda}_\ell}.
\]
We conclude that
\[
\vec{f}|_{\breve{\Lambda}_\ell}= (A(\vec{c}^\top \Psi|_{\Lambda}))(\breve{\Psi}|_{\breve{\Lambda}_\ell})=(A\underline{u})(\breve{\Psi}|_{\breve{\Lambda}_\ell})=
\big(\mathfrak{\breve{q}}_\ell^\top \underline{\vec{e}}\big)|_{\breve{\Lambda}_\ell},
\]
and
\begin{align*}
\vec{e}|_{\breve{\Pi}_B}=(Au)(\breve{\Phi}|_{\breve{\Pi}_B}) &= (A\underline{u})(\breve{\Phi}|_{\breve{\Pi}_B})+(A(\vec{d}^\top\Phi|_{\Pi}))(\breve{\Phi}|_{\breve{\Pi}_B})\\&=   \big(\mathfrak{p}_{\ell} ^\top \underline{\vec{e}}\big)|_{\breve{\Pi}_B}+(A(\vec{d}^\top\Phi|_{\Pi}))(\breve{\Phi}|_{\breve{\Pi}_B}).
\end{align*}

From the assumptions on the collections $\Phi$, $\breve{\Phi}$, $\breve{\Psi}$, and $\Psi$, and their consequences on the sparsity of the matrices $\mathfrak{p}_{\ell} $, $\breve{\mathfrak{p}}_{\ell}$, $\mathfrak{q}_\ell$, and $\mathfrak{\breve{q}}_\ell$, one infers that the total cost of the evaluations of the statements in \texttt{eval} is ${\mathcal O}(\# \breve{\Pi}+\# \breve{\Lambda}_\ell+\# \Pi+\# \Lambda_\ell)$ plus the cost of the recursive call. Using
$ \# \breve{\underline{\Pi}} + \# \underline{\Pi} \lesssim \# \breve{\Lambda}_\ell+\# \Lambda_\ell$
and induction, we conclude the second statement of the theorem.
\end{proof}

\thmevallow*
\begin{proof}
Notice that
$\# \underline{\Pi}  \lesssim \#\Lambda_\ell+\#\Pi_B \lesssim  \#\Lambda_\ell+\#\breve{\Lambda}_\ell$.

For $\breve{\Pi} \cup \breve{\Lambda}= \emptyset$, the call produces nothing, which is correct.

Now let  $\breve{\Pi} \cup \breve{\Lambda}\neq \emptyset$. The definitions of $\underline{\breve{\Pi}}$ and $\underline{\Pi}_B$ show that
\[
\vec{f}|_{\breve{\Lambda}_\ell} =
(A\Phi|_{\Pi})(\breve{\Psi}|_{\breve{\Lambda}_\ell})\vec{d}=
(A\Phi|_{\Pi})(\breve{\Psi}|_{\breve{\Lambda}_\ell})\vec{d}|_{\Pi_B}=
\big(\mathfrak{\breve{q}}_\ell^\top (A\Phi|_{\underline{\Pi}_B})(\breve{\Phi}|_{\underline{\breve{\Pi}}})\mathfrak{p}_{\ell} \vec{d}|_{\Pi_B}\big)|_{\breve{\Lambda}_{\ell}}=
(\mathfrak{\breve{q}}_\ell^\top \underline{\vec{e}})|_{\breve{\Lambda}_{\ell}}
\]

From  $\breve{\Lambda}$ being an $\ell$-tree, the definitions of $\breve{S}(\cdot)$ and $\Pi_B$, and the locality of $a$, and for the third equality, the definition of $\underline{\Pi}$, one has
\begin{align*}
f|_{\breve{\Lambda}_{\ell+1 \uparrow}}&=a(\breve{\Psi}|_{\breve{\Lambda}_{\ell+1 \uparrow}},\Phi|_\Pi)\vec{d}
+
{\bf L}|_{\breve{\Lambda}_{\ell+1 \uparrow} \times \Lambda_\ell}\vec{c}|_{\Lambda_\ell}+
{\bf L}|_{\breve{\Lambda}_{\ell+1 \uparrow} \times \Lambda_{\ell+1 \uparrow }}\vec{c}|_{\Lambda_{\ell+1\uparrow}}\\
& = (A\Phi|_\Pi)(\breve{\Psi}|_{\breve{\Lambda}_{\ell+1 \uparrow})}\vec{d}|_{\Pi_B}+
(A\Psi|_{\Lambda_\ell})(\breve{\Psi}|_{\breve{\Lambda}_{\ell+1 \uparrow}})
\vec{c}|_{\Lambda_\ell}+{\bf L}|_{\breve{\Lambda}_{\ell+1 \uparrow} \times \Lambda_{\ell+1 \uparrow }}\vec{c}|_{\Lambda_{\ell+1\uparrow}}
\\
& = (A\Phi|_{\underline{\Pi}})(\breve{\Psi}|_{\breve{\Lambda}_{\ell+1 \uparrow}})\underline{\vec{d}}+{\bf L}|_{\breve{\Lambda}_{\ell+1 \uparrow} \times \Lambda_{\ell+1 \uparrow }}\vec{c}|_{\Lambda_{\ell+1\uparrow}}\\
& =\mathtt{evallow}(A)(\ell+1, \breve{\Lambda}_{\ell+1\uparrow}, \underline{\Pi}, \Lambda_{\ell+1\uparrow},\underline{\vec{d}}, \vec{c}|_{\Lambda_{\ell+1\uparrow}}) \end{align*}
by induction.

From the assumptions on the collections $\Phi$, $\breve{\Psi}$, and $\Psi$, and their consequences on the sparsity of the matrices $\mathfrak{p}_{\ell} $, $\mathfrak{q}_\ell$, and $\mathfrak{\breve{q}}_\ell$, one easily infers that the total cost of the evaluations of the statements in \texttt{evallow} is ${\mathcal O}(\# \breve{\Lambda}_\ell+\# \Pi+\# \Lambda_\ell)$ plus the cost of the recursive call. Using
$ \# \underline{\Pi} \lesssim \# \breve{\Lambda}_\ell+\# \Lambda_\ell$
and induction, we conclude the second statement of the theorem.
\end{proof}

\thmevaldbl*
\begin{proof}We write
\begin{align} \nonumber
R_{\breve{\bm{\Lambda}}} ({\bf A}_0 \otimes {\bf A}_1) I_{\bm{\Lambda}}=&R_{\breve{\bm{\Lambda}}} (({\bf L}_0+{\bf U}_0) \otimes {\bf A}_1) I_{\bm{\Lambda}}\\
\label{first}
=&R_{\breve{\bm{\Lambda}}} ({\bf L}_0\otimes \mathrm{Id})(\mathrm{Id} \otimes {\bf A}_1) I_{\bm{\Lambda}}+
\\
\label{second}
&R_{\breve{\bm{\Lambda}}} (\mathrm{Id} \otimes {\bf A}_1) ({\bf U}_0 \otimes \mathrm{Id}) I_{\bm{\Lambda}}.
\end{align}

Considering \eqref{first}, the range of $(\mathrm{Id} \otimes {\bf A}_1) I_{\bm{\Lambda}}$ consists of vectors whose entries with first index outside $P_0\bm{\Lambda}$ are zero.
In view of the subsequent application of ${\bf L}_0 \otimes \mathrm{Id}$, furthermore only those indices $(\lambda,\gamma) \in P_0\bm{\Lambda} \times \breve{\vee}^1$ of these vectors might be relevant for which $\exists (\mu,\gamma) \in \breve{\bm{\Lambda}}$, i.e. $\gamma \in \bm{\Lambda}_{1,\mu}$, with $|\mu|>|\lambda|$ and $|\breve{S}^0(\mu) \cap S^0(\lambda)|>0$.
Indeed $|\breve{S}^0(\mu) \cap S^0(\lambda)|=0$ implies
$|\supp \breve{\psi}^0_\mu \cap \supp \psi^0_\lambda|=0$, and so ${A_0}(\breve{\psi}^0_\mu,\psi^0_\lambda)=0$.
If for given $(\lambda,\gamma)$ such a pair $(\mu,\gamma)$ exists for  $|\mu|>|\lambda|$, then such a pair exists for $|\mu|=|\lambda|+1$ as well, because $\breve{\bm{\Lambda}}_{0,\gamma}$ is a tree, and $\breve{S}^0(\mu') \supset \breve{S}^0(\mu)$ for any ancestor $\mu'$ of $\mu$.
In order words, the condition $|\mu|>|\lambda|$ can be read as $|\mu|=|\lambda|+1$. The set of $(\lambda,\gamma)$ that we just described is given by the set $\bm{\Sigma}$, and so we infer that
\[
R_{\breve{\bm{\Lambda}}} ({\bf L}_0\otimes \mathrm{Id})(\mathrm{Id} \otimes {\bf A}_1) I_{\bm{\Lambda}}=R_{\breve{\bm{\Lambda}}} ({\bf L}_0\otimes \mathrm{Id})I_{\bm{\Sigma}}  R_{\bm{\Sigma}}(\mathrm{Id} \otimes {\bf A}_1) I_{\bm{\Lambda}}.
\]

Now let $(\lambda,\gamma) \in \bm{\Sigma}$. Using that $P_0\bm{\Lambda}$ is a tree, and $S^0(\lambda) \subset S^0(\lambda')$ for any ancestor $\lambda'$ of $\lambda$, we infer that $(\lambda',\gamma) \in \bm{\Sigma}$. Using that for any $\mu \in P_0 \breve{\bm{\Lambda}}$, $\breve{\bm{\Lambda}}_{1,\mu}$ is a tree, we infer that for any ancestor $\gamma'$ of $\gamma$, $(\lambda,\gamma') \in \bm{\Sigma}$, so that $\bm{\Sigma}$ is a double-tree.

For any $\mu \in \breve{\vee}^0$, the number of $\lambda \in \vee^0$ with $|\mu|=|\lambda|+1$ and $|\breve{S}^0(\mu) \cap S^0(\lambda)|>0$ is uniformly bounded,
from which we infer that $\# \bm{\Sigma} \lesssim \sum_{\mu \in P_0 \breve{\bm{\Lambda}}} \#\breve{\bm{\Lambda}}_{1,\mu}=\#\breve{\bm{\Lambda}}$.

Considering \eqref{second}, the range of $({\bf U}_0 \otimes \mathrm{Id}) I_{\bm{\Lambda}}$ consists of vectors that can only have non-zero entries for indices $(\mu,\lambda) \in \breve{\vee}^0 \times P_1\bm{\Lambda}$ for which there exists a $\gamma \in \bm{\Lambda}_{0,\lambda}$ with $|\gamma|\geq |\mu|$ and $|\breve{S}^0(\mu) \cap S^0(\gamma)|>0$.
Since $\bm{\Lambda}_{0,\lambda}$ is a tree, and $S^0(\gamma') \supset S^0(\gamma)$ for any ancestor $\gamma'$ of $\gamma$,
equivalently $|\gamma|\geq |\mu|$ can be read as $|\gamma|= |\mu|$. Furthermore, in view of the subsequent application of $R_{\breve{\bm{\Lambda}}} (\mathrm{Id} \otimes {\bf A}_1)$, it suffices to consider
those indices $(\mu,\lambda)$ with $\mu \in P_0\breve{\bm{\Lambda}}$.
The set of $(\mu,\lambda)$ that we just described is given by the set $\bm{\Theta}$, and so we infer that
\[
R_{\breve{\bm{\Lambda}}} (\mathrm{Id} \otimes {\bf A}_1) ({\bf U}_0 \otimes \mathrm{Id}) I_{\bm{\Lambda}}=R_{\breve{\bm{\Lambda}}} (\mathrm{Id} \otimes {\bf A}_1) I_{\bm{\Theta}} R_{\bm{\Theta}} ({\bf U}_0 \otimes \mathrm{Id}) I_{\bm{\Lambda}}.
\]

Now let $(\mu,\lambda) \in \bm{\Theta}$. If $\lambda'$ is an ancestor of $\lambda$, then
from $P_0 \bm{\Lambda}$ being a tree, and $\bm{\Lambda}_{0,\lambda} \subset \bm{\Lambda}_{0,\lambda'}$, we have $(\mu,\lambda') \in \bm{\Theta}$. If $\mu'$ is an ancestor of $\mu$, then from $P_0 \breve{\bm{\Lambda}}$ being a tree, and $\breve{S}^0(\mu') \supset \breve{S}^0(\mu)$, we infer that $(\mu',\lambda) \in \bm{\Theta}$, and thus that $\bm{\Theta}$ is a double-tree.

For any $\gamma \in \vee^0$, the number of $\mu \in \breve{\vee}^0$ with $|\mu|=|\gamma|$ and $|\breve{S}^0(\mu) \cap S^0(\gamma)|>0$ is uniformly bounded,
from which we infer that $\# \bm{\Theta} \lesssim \sum_{\lambda \in P_1 \bm{\Lambda}} \#\bm{\Lambda}_{0,\lambda}=\#\bm{\Lambda}$.
\end{proof}

\end{document}